\theoremstyle{definition}   
\newtheorem{definition}{Definition}
\theoremstyle{plain}    
\newtheorem{lemma}[definition]{Lemma}
\newtheorem{proposition}[definition]{Proposition}
\newtheorem{theorem}[definition]{Theorem}
\newtheorem{corollary}[definition]{Corollary}
\renewcommand{\phi}{\varphi}
\newcommand\Le{\mathbf{L}}
\newcommand\pair[1]{{\langle#1\rangle}}
\newcommand\lang{\mathscr{L}}
\newcommand\n{\noindent}
\newcommand\Fm{\mathbf{Fm}}
\newcommand\logic{\mathbf{\Lambda}}
\newcommand\A{\mathbf{A}}
\newcommand\B{\mathbf{B}}
\newcommand\C{\mathbf{C}}
\newcommand\M{\mathfrak{M}}
\newcommand\class[1]{{\mathcal{#1}}}
\newcommand\kripkeframe{\mathfrak{F}}
\newcommand\Se{\mathbf{S}}
\newcommand\gru{\multimap}
\newcommand\Paizero{\mathbf{E}\text{-}\mathbf{PAI}_0^g}
\newcommand\ModelPaizero{\textit{PAI}_0^E}
\newcommand\Paifull{\mathbf{E}\text{-}\mathbf{PAI}^g}
\newcommand\ModelPaifull{\textit{PAI}^E}
\newcommand\Paioriginal{\mathbf{PAI}}
\newcommand\ModelPaioriginal{\textit{PAI}^F}
\newcommand\FineGlobalPai{\mathbf{PAI}^g}
\newcommand\FergusonPai{\mathbf{CA/PAI}}
\newcommand\FergusonPaiBox{\mathbf{CA/PAI}_m}
\newcommand\FergusonGlobalPai{\FergusonPaiBox^g}
\newcommand\FergusonDai{\mathbf{CA/DAI}}
\newcommand\Sfourglobal{\mathbf{S4}^g}
\newcommand\Sfourlocal{\mathbf{S4}}
\newcommand\Fmobject{Fm_{\pair{\neg,\lor,\Box,\to}}}
\newcommand\Daifull{g\mathbf{D}}
\newcommand\ModelDaifull{\textit{gD}}
\newcommand\DunnDai{\mathbf{DAI}}
\newcommand\EpsteinD{\mathbf{D}}
\newcommand\Daizero{g\mathbf{D}_0}
\newcommand\ModelDaizero{\textit{gD}_0}
\newcommand\ModelFergusonPai{\textit{CA/PAI}}
\newcommand\ModelFergusonPaiBox{\textit{CA/PAI}_m}
\newcommand\LocalPaifull{\mathbf{E}\text{-}\mathbf{PAI}^l}
\DeclareSymbolFont{symbolsC}{U}{txsyc}{m}{n}
\DeclareMathSymbol{\strictif}{\mathrel}{symbolsC}{74}
\title{Generalized Epstein semantics for Parry systems}
\author{N. Zamperlin\thanks{I would like to thank Francesco Paoli for his invaluable supervision while developing the ideas that led to this paper. I am also grateful to Thomas Ferguson for the precious insights he gave me while discussing together.}}
\date{\textit{Department of Mathematics and Computer Science,} \\
	\textit{University of Cagliari, Italy}}
\begin{document}

\maketitle

\sloppy

\begin{abstract}
\n In this paper I introduce a generalized version of Richard Epstein's set-assignment semantics (\cite{Epstein90_book}). As a case study, I consider how this framework can be used to characterize William Parry's logic of analytic implication and some of its recent variations proposed by \cite{Ferguson23_subject_matter1}. In generalized Epstein semantics the parallel use of two algebras, one for extensional and the other for intensional values, allows to account for various forms of content sharing between formulae, which motivates the choice to investigate Parry systems. Hilbert-style axiomatizations and completeness proofs will be presented for all the considered calculi, in particular as main result I provide a set-assignment semantics for Parry's logic.
\end{abstract}

\begin{section}{Introduction}\label{sec: intro}

Accounting for subject-matter relations in logic has been a long lasting enterprise that still poses an open problem. Formally, the problem is translated into incorporating content sensitive connectives, above all implication, inside the logical calculus. One of the forefathers of this discussion was William T. Parry, who developed \textit{analytic implication}, that is a content sensitive implication which obeys the \textit{proscriptive principle}: ``No formula with analytic implication as main relation holds universally if it has a free variable occurring in the consequent but not the antecedent" (\cite[p. 151]{Parry68}). In a system where this principle is enforced we have a meaningful semantic contribution provided by the contents of formulae. 

Parry's logic was introduced in \cite{Parry33} and, despite not becoming a main trend in the future branch of relevance logics, it was the starting point for the family of containment logics, in which the notion of a correct implication requires containment of content (in a specified direction) between consequent and antecedent.

Among the plethora of logics that followed this intuition, we focus on a recent proposal, that of Francesco Berto's \textit{topic-sensitive intentional modals} (TSIMs; cf. \cite{Berto2022_topics}), which is situated within the project of a logic of conceivability pursued by Berto and his collaborators. The main goal is to obtain an extremely general framework able to accounting for many intensional and hyperintensional phenomena; such framework has been applied to, e.g., knowability relative to information (\cite{Berto_Hawke21_knowability}), reality-oriented mental simulation (\cite{Berto18_imagination}), hyperintensional belief revision (\cite{Berto2018_belief_revision}). On the technical side, TSIMs are binary, variably strict modal operators of the form $X^\phi\psi$, whose intuitive reading is ``given $\phi$, an agent $X$s that $\psi$", for some propositional attitude $X$. Their semantics is provided by a Kripke frame containing a different accessibility relation for each TSIM, expanded furthermore with a semilattice of topics for each world and a map assigning to each formula its content; the resulting semantic clause for $X^\phi\psi$ is that of a strict implication $\phi\strictif\psi$ (through the accessibility relation $R_X$) which moreover satisfies a content filter, in the sense that the content of $\phi$ contains that of $\psi$.

Despite its great versatility, Berto's theory has a crucial decisive question left open: what is the content of a TSIM? A fully transparent solution which reduces the content of $X^\phi\psi$ to the fusion of the contents of the components formulae is unsatisfactory, but until a decision is taken the language of the logic can only be take as first degree with respect to the new intensional operators, limiting its expressive power. An attempt to solve the problem comes for Thomas M. Ferguson, whose study of subject-matter and intensional operators has been developed in a recent series of papers (\cite{Ferguson23_subject_matter1}, \cite{Ferguson23_subject_matter2}, \cite{Ferguson23_subject_matter3}). The first article in the series is the one of major interest for the current paper. In that, Ferguson considers as a starting point of his investigaiton the case study provided by Parry's logic of analytic implication, focusing on the possible world semantics devised by \cite{Fine86_analytic_implication}, which is based on $\mathbf{S4}$-Kripke frames expanded by a semilattice of topics for each world and certain homomorphisms that guarantees the persistence of content inclusion. By assigning to an implicative formula a binary operation over said semillatice and removing any property from this new operation, Ferguson manages to obtain a very weak, therefore vastly general, content sensitive conditional, which can be seen as a special TSIM with a non-trivial topic itself.

While looking for a solution to Berto's framework limitation, this paper takes into consideration the helpful insights provided by \cite{Ferguson23_subject_matter1} but it follows a different route, taking inspiration from the work of Richard L. Epstein and his work on intensional implication developed within the framework of set-assignment semantics. This semantics has been developed by Epstein since \cite{Epstein79}, culminating with the monograph \cite{Epstein90_book}. In set-assignment semantics, the content of formulae is a certain subset of a reference set. By imposing conditions on this assignment and requiring that a formula satisfies certain relation in the content of the components as additional condition for his truth semantic clause, the main connective of said formula is given an intensional layer. This is the general strategy used to intensionalize Boolean connectives without changing their underlying semantics dramatically. 

Set-assignment semantics has been used to successfully recapture well-know logics (see \cite{Epstein90_book}): modal (ch. VI; e.g., $\mathbf{K},\mathbf{T},\mathbf{TB},\mathbf{S4},\mathbf{S5}$, Grzegorczyk's $\mathbf{S4Grz}$, provability logic $\mathbf{GL}$), intuitionistic (ch. VII; including Johansson's minimal logic), many-valued (ch. VIII; $\mathbf{K_3}$, \L ukasiewicz's $\mathbf{\L_3}, \mathbf{\L_\infty}$, G\"odel's $\mathbf{G_3}, \mathbf{G_\infty}$), paraconsistent (ch. IX; D'Ottaviano and da Costa's $\mathbf{J_3}$). Of a much greater interest for our current purpose is the original family of dependence logics introduced by Epstein (ch. V). These logics will be properly introduced in the next section.

\end{section}

\begin{section}{Systems for content inclusion}\label{sec: parry_intro}

Parry's logic of analytic implication $\Paioriginal$ is presented by \cite{Fine86_analytic_implication}\footnote{As Ferguson notes, Fine's logic is a system for an extension of Parry's original one. Nonetheless, the expanded system can be considered faithful to Parry's intent (cf. \cite[p. 10, footnote 2]{Ferguson23_subject_matter1}.). See \cite[pp. 4-5]{Ferguson17_book} for further historical details.} for a language\footnote{In the following we will mainly work with the language $\lang_\mathbf{ML}^\to=\pair{\neg,\lor,\Box,\to}$, defining $\prec$ and, in the stronger systems, defining $\Box$ as well. In particular, in Fine's logic we can define $\phi\prec\psi:=\psi\to(\phi\lor\neg\phi)$, as he acknowledges in the quoted paper. For this reason in the following we are going to take $\prec$ as defined, in this way we will be able to compare ours and Fine's logics as they belong to the same similarity type.\label{footnote: Fine_language}} of type $\pair{\neg,\land,\to\,\Box,\prec}$ as the following Hilbert-style system\footnote{\cite{Fine86_analytic_implication} actually provides two systems, the second of which is Parry's logic as presented by \cite{Dunn72_demodalized_implication}. Fine proves that the two systems are equivalent.}:

\begin{itemize}
    \item[(1)] $\phi$, for any classical tautology $\phi$
    \item[(2)] $((\phi\prec\psi)\land(\psi\prec\chi))\supset(\phi\prec\chi)$
    \item[(3)] $((\phi\prec\chi)\land(\psi\prec\chi))\supset((\phi\land\psi)\prec\chi)$
    \item[(4)] $\phi\prec\psi$, when $Var(\phi)\subseteq Var(\psi)$
    \item[(5)] $(\phi\prec\psi)\supset\Box(\phi\prec\psi)$
    \item[(6)] $(\phi\to\psi)\equiv(\Box(\phi\supset\psi)\land(\psi\prec\phi))$
    \item[(K)] $\Box(\phi\supset\psi)\supset(\Box\phi\supset\Box\psi)$
    \item[(T)] $\Box\phi\supset\phi$
    \item[(4)] $\Box\phi\supset\Box\Box\phi$
    \item[(MP)] $\phi,\phi\to\psi\vdash\psi$
    \item[(Nec)] $\vdash\phi\text{ }\Rightarrow\text{ }\vdash\Box\phi$
\end{itemize}

Fine's model theory for $\Paioriginal$ is given by an augmented Kripke-style semantics, in which to each world $w$ is associated a set of contents or topics $T_w$, with a mapping $t_w$ assigning content to formulae relative to a world. The algebraic structure of $T_w$ is actually that of a semilattice, with the induced partial order intuitively read as content inclusion.

\begin{definition}\label{def.: Fine_PAI_model}
    A $\ModelPaioriginal$-model is a tuple $\pair{W,R,\pair{T_w,\oplus_w}_{w\in W},v,\{t_w\}_{w\in W}}$. $\pair{W,R}$ is a $\mathbf{S4}$-frame. For each $w\in W$, $\pair{T_w,\oplus_w}$ is a join-semilattice, and $t_w:Fm\to T_w$ is a mapping satisfying $t_w(\neg\phi)=t_w(\phi)$ and $t_w(\phi\circ\psi)=t_w(\phi)\oplus t_w(\psi)$ for binary $\circ$. Moreover for $p,q\in Var$, if $w'\in R[w]$ then $t_w(p)\leq_w t_w(q)$ implies $t_w(p)\leq_{w'} t_{w'}(q)$. $v:Var\to\mathscr{P}(W)$ is a valuation.
\end{definition}

\n Persistence of content inclusion extends to all formulae, that is $t_w(\phi)\leq_w t_w(\psi)$ implies $t_w(\phi)\leq_{w'} t_{w'}(\psi)$ when $w'\in R[w]$. The forcing relation in a model $\M$ is standard, with the addition of:
\begin{itemize}
    \item $\M,w\Vdash\phi\prec\psi$ iff $t_w(\phi)\leq_w t_w(\psi)$;
    \item $\M,w\Vdash\phi\to\psi$ iff $t_w(\phi)\leq_w t_w(\psi)$ and $\forall w'\in R[w], w'\Vdash\phi\supset\psi$.
\end{itemize}

\n The notions of satisfiability and validity are standard. In this setting, by frame we mean $\pair{W,R,\pair{T_w,\oplus_w}_{w\in W}}$. $\Paioriginal$ is proved to be complete w.r.t. the class of $\ModelPaioriginal$-frames (\cite{Fine86_analytic_implication}).

In the following we will discuss global modal logics, here we recall the basic definition.

\begin{definition}
    Let $\class{F}$ be a class of Kripke frames. The \textit{global} logical consequence relation induced by $\class{F}$ is defined as: $\Gamma\vDash^g_\class{F}\phi\Leftrightarrow$ for all models $\M=\pair{\kripkeframe,v}$ with $\kripkeframe\in\class{F}$, $\M\vDash\gamma$ for each $\gamma\in\Gamma$ implies $\M\vDash\phi$.
\end{definition}

\n The usual notion of modal consequence relation will be called the \textit{local} relation and will be denoted by $\vDash^l$. We say that a logic is complete in the local (respectively, global) sense w.r.t. a class of frames $\class{F}$ when the consequence relation over $\class{F}$ is intended as the local (global) one.

In the context of minimal normal modal logic \textbf{K} and its extensions, the global companion of a logic is obtained by switching the rule of necessitation with the rule of global necessitation.

\begin{definition}
    Let $\logic$ be an expansion of \textbf{K} whose Hilbert-style presentation contains the rule (Nec). Its \textit{global companion} $\logic^g$ is the logic obtained by substituting (Nec$_g$) $\phi\vdash\Box\phi$ for (Nec).
\end{definition}
    
\n It is immediate to see that $\logic\subseteq\logic^g$.

Considering an extension of \textbf{K} which is complete in the local sense w.r.t. a class of frames, its global companion is complete in the global sense w.r.t. to the same class of frames. We sketch the proof for $\mathbf{K}^g$, it can be easily adapted to its axiomatic extensions obtained adding formulae describing frame properties.

\begin{proposition}\label{prop.: global_K_completeness}
    $\mathbf{K}^g$ is complete w.r.t. the global consequence relation over the class of all Kripke frames.
\end{proposition}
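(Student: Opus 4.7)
The plan is to reduce global derivability in $\mathbf{K}^g$ to local derivability in $\mathbf{K}$, then invoke the standard local completeness of $\mathbf{K}$, and finally pass to a point-generated submodel in order to upgrade local satisfaction of the premises to global satisfaction.

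I would first set $\Gamma^{\Box}:=\{\Box^n\gamma : \gamma\in\Gamma,\ n\geq 0\}$ and prove the reduction
\[
\Gamma\vdash_{\mathbf{K}^g}\phi \iff \Gamma^{\Box}\vdash_{\mathbf{K}}\phi.
\]
The right-to-left direction is immediate, since every $\Box^n\gamma$ is derivable from $\gamma$ in $\mathbf{K}^g$ by $n$ applications of (Nec$_g$), and $\mathbf{K}\subseteq\mathbf{K}^g$. The left-to-right direction goes by induction on the length of a $\mathbf{K}^g$-derivation; the only nontrivial case is (Nec$_g$). If the induction hypothesis yields $\Gamma^{\Box}\vdash_{\mathbf{K}}\psi$ using some finite set of premises $\Box^{n_1}\gamma_1,\dots,\Box^{n_k}\gamma_k$, then I would necessitate the corresponding $\mathbf{K}$-theorem $(\Box^{n_1}\gamma_1\land\dots\land\Box^{n_k}\gamma_k)\supset\psi$ (a legitimate use of the local rule (Nec), since it is applied to a theorem) and distribute the box via (K) to obtain $\Box^{n_1+1}\gamma_1,\dots,\Box^{n_k+1}\gamma_k\vdash_{\mathbf{K}}\Box\psi$; these new premises still belong to $\Gamma^{\Box}$, so $\Gamma^{\Box}\vdash_{\mathbf{K}}\Box\psi$ as required.

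Given the reduction, suppose $\Gamma\not\vdash_{\mathbf{K}^g}\phi$. Then $\Gamma^{\Box}\not\vdash_{\mathbf{K}}\phi$, and local completeness of $\mathbf{K}$ supplies a pointed Kripke model $(\M,w)$ with $\M,w\Vdash\Gamma^{\Box}$ and $\M,w\not\Vdash\phi$. I would then replace $\M$ by its submodel $\M'$ generated by $w$. Truth at $w$ is preserved under the generated-submodel construction, so $\M',w\not\Vdash\phi$. Moreover, every world $w'$ of $\M'$ is reached from $w$ in some finite number $n$ of $R$-steps, and since $\M,w\Vdash\Box^n\gamma$ for each $\gamma\in\Gamma$, it follows that $\M',w'\Vdash\gamma$. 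Hence each $\gamma\in\Gamma$ is true at every world of $\M'$, i.e., $\M'\vDash\gamma$ globally, while $\M'\not\vDash\phi$; this directly witnesses $\Gamma\not\vDash^g\phi$.

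The main obstacle will be the (Nec$_g$) step in the inductive reduction, which ultimately rests on the $\Box$-monotonicity property of local $\mathbf{K}$-derivability; the remainder is bookkeeping. Because the generated-submodel construction preserves the standard frame conditions (reflexivity, transitivity, symmetry, and in general any Sahlqvist-definable property), the same template yields global completeness for any axiomatic extension of $\mathbf{K}$ that is locally complete w.r.t.\ a frame class closed under point-generated subframes, which justifies the remark that the argument adapts routinely.
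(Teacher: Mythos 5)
Your proof is correct, but it takes a genuinely different route from the one in the paper. The paper argues model-theoretically from the start: it re-runs the Henkin canonical model construction for $\mathbf{K}$, but restricts the set of worlds to the maximal $\mathbf{K}$-consistent theories extending the given $\mathbf{K}^g$-theory $\Gamma$; the one delicate point is the existence lemma, where the set $v^-=\{\phi\}\cup\{\psi \mid \Box\psi\in w\}$ must be shown to contain $\Gamma$, and this is exactly where closure of $\Gamma$ under (Nec$_g$) is used. You instead prove the syntactic reduction $\Gamma\vdash_{\mathbf{K}^g}\phi$ iff $\Gamma^{\Box}\vdash_{\mathbf{K}}\phi$ (your (Nec$_g$) case, via the local deduction theorem, necessitation of a theorem, and (K)-distribution, is sound --- note only that $\Gamma^{\Box}$ may be infinite, so you are implicitly relying on \emph{strong} local completeness of $\mathbf{K}$, which the canonical model does deliver), and then treat local completeness as a black box, finishing with the point-generated submodel to convert $\M,w\Vdash\Box^n\gamma$ for all $n$ into global truth of $\gamma$. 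Your version is more modular: it isolates the global/local bridge as a purely proof-theoretic lemma and transfers immediately to any locally strongly complete extension whose frame class is closed under point-generated subframes, which cleanly justifies the paper's remark that the argument adapts to axiomatic extensions. The paper's version has the advantage of staying inside the canonical model, which is the form actually reused later for $\mathbf{PAI}^g$ (Proposition \ref{th.: pai_global_kripke_completeness}), where the relevant completeness proof of Fine is itself a canonical model construction and the same one-line modification of $W$ applies verbatim.
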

\begin{proof}[Sketch of the proof]
    Take any Henkin-style completeness proof of \textbf{K} (e.g., \cite[ch. 4.2]{Blue_book}). Consider a \textbf{K}$^g$ theory $\Gamma$. The canonical model $\M^\Gamma=\pair{W,R,v}$ for $\Gamma$ differs from a standard canonical model only for the fact that we put $W=\{ \Delta\supseteq\Gamma$ $|$ $\Delta$ is a maximal \textbf{K}-consistent theory$\}$. Observe that we require each $\Delta$ to be only a theory of local \textbf{K}. It can be proved in the standard way\footnote{A little care is needed when proving the existence lemma (\cite[p. 199, lem. 4.20]{Blue_book}), which states that in the canonical model for each point $w$, if $\diamond\phi\in w$ then $\exists w'\in R[w], \phi\in w'$. In the local case this is proved by building the set $v^-=\{\phi\}\cup\{\psi$ $|$ $\Box\psi\in w\}$, prove it to be consistent, expand it via Lindenbaum's lemma and show that said expansion is accessible from $w$. In the global case, we also need to prove that $\Gamma\subseteq v^-$, otherwise the expansion of $v^-$ might not belong to $W$, since our canonical model contains only maximal consistent sets that expand $\Gamma$. This is guaranteed by the fact that $\Gamma$ is a \textbf{K}$^g$-theory, hence it is closed under (Nec$_g$), that is $\forall\gamma\in\Gamma,\Box\gamma\in\Gamma$. Since $w$ is a point of the canonical model, $\Gamma\subseteq w$, so $\Gamma=\{\gamma$ $|$ $\Box\gamma\in\Gamma\}\subseteq\{\psi$ $|$ $\Box\psi\in w\}\subseteq v^-$.} that the canonical valuation $v$ is indeed a valuation function. Now proceed by contraposition and suppose $\Gamma\nvdash_{\mathbf{K}^g}\phi$. Expand $\Gamma$ to the least $\mathbf{K}^g$-theory $\Gamma'\supseteq\Gamma$. Again $\Gamma'\nvdash_{\mathbf{K}^g}\phi$, which implies $\Gamma'\nvdash_{\mathbf{K}}\phi$. By Lindenbaum's lemma, expand $\Gamma'$ to a maximal \textbf{K}-consistent theory $\Delta$ s.t. $\phi\notin\Delta$. Consider the canonical model for $\Gamma'$. We have $\Delta\in W$ and for all $\Sigma\in W,\Sigma\Vdash\gamma$ for each $\gamma\in\Gamma'$, since $\Gamma'\subseteq\Sigma$. At the same time $\Delta\nVdash\phi$. Putting $\class{K}$ the class of all frames, we conclude $\Gamma\nvDash^g_\class{K}\phi$.
\end{proof}

Since $\Paioriginal$ is an expansion of $\Sfourlocal$, we can consider its global companion $\FineGlobalPai$.

\begin{proposition}\label{th.: pai_global_kripke_completeness}
    $\FineGlobalPai$ is complete w.r.t. the global consequence relation over the class of \textit{PAI}-frames.
\end{proposition}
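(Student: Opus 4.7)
The plan is to mimic the proof strategy of Proposition~\ref{prop.: global_K_completeness}, transferring Fine's local completeness proof for $\Paioriginal$ to the global setting by a suitable modification of the canonical model. Concretely, I would assume $\Gamma \nvdash_{\FineGlobalPai} \phi$, extend $\Gamma$ to the least $\FineGlobalPai$-theory $\Gamma'$ (which is still closed under $(\text{Nec}_g)$ and still does not prove $\phi$), and then invoke $\Paioriginal \subseteq \FineGlobalPai$ to conclude $\Gamma' \nvdash_{\Paioriginal} \phi$. By Lindenbaum's lemma for $\Paioriginal$, there is a maximal $\Paioriginal$-consistent theory $\Delta \supseteq \Gamma'$ with $\phi \notin \Delta$.

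Next, I would build a variant of Fine's canonical model for $\Paioriginal$ by restricting the universe $W$ to the set of maximal $\Paioriginal$-consistent theories extending $\Gamma'$, keeping $R$, the topic semilattices $\pair{T_w,\oplus_w}$, the content assignments $t_w$, and the valuation $v$ defined exactly as in \cite{Fine86_analytic_implication}. Because $\Gamma'$ is closed under $(\text{Nec}_g)$, the same trick recorded in the footnote of Proposition~\ref{prop.: global_K_completeness} applies: whenever one has to verify the existence lemma for $\diamond\chi \in w$, the set $v^- = \{\chi\} \cup \{\psi \mid \Box\psi \in w\}$ automatically contains $\Gamma'$, since $\gamma \in \Gamma'$ implies $\Box\gamma \in \Gamma' \subseteq w$. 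Hence the Lindenbaum expansion of $v^-$ can be taken inside $W$, and the canonical $R$ remains reflexive and transitive as in Fine's original argument, so the restricted structure is still a $\ModelPaioriginal$-frame.

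The truth lemma, including the clauses for $\prec$, $\to$ and $\Box$, and the persistence of content inclusion, carries over verbatim from Fine's proof because the topics and the assignments $t_w$ are unchanged by the restriction of $W$; none of the arguments there depends on $W$ being the full set of maximal $\Paioriginal$-consistent theories. From this it follows that every $\Sigma \in W$ forces all of $\Gamma$ (since $\Gamma \subseteq \Gamma' \subseteq \Sigma$), while $\Delta \in W$ fails to force $\phi$. Putting $\class{F}$ for the class of $\ModelPaioriginal$-frames, we therefore get $\Gamma \nvDash^g_{\class{F}} \phi$.

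The main obstacle I expect is the existence-lemma step, exactly as in the basic case: one must check that the Lindenbaum expansion needed to witness $\diamond\chi$ can be kept within the restricted universe, which is precisely what $(\text{Nec}_g)$-closure of $\Gamma'$ buys. A secondary but routine point is verifying that the persistence condition for $\leq_w$ along $R$ is not disturbed by shrinking $W$; since persistence is a local condition on pairs $(w,w')$ and both worlds involved are still in the restricted $W$, Fine's original verification applies unchanged. Everything else reduces to rehearsing Fine's completeness proof inside the relativized canonical model.
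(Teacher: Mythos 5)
Your proposal is correct and follows essentially the same route as the paper: both take Fine's canonical model, restrict the universe to maximal $\Paioriginal$-consistent theories extending the $(\text{Nec}_g)$-closed theory $\Gamma'$, use that closure to keep the existence-lemma witnesses inside the restricted universe, and let the rest of Fine's truth lemma carry over unchanged. The only ingredient the paper makes explicit that you gloss slightly is that Fine proves only weak completeness, so the Lindenbaum-style expansion you invoke is supplied later by the paper's Theorem~\ref{th.: lindenbaum_like_lemma}.
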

\begin{proof}
    Consider \cite{Fine86_analytic_implication}'s proof\footnote{To be precise, \cite{Fine86_analytic_implication} only proves weak completeness for $\Paioriginal$. Nonetheless the proof of strong completeness can be easily obtained adapting his. It is enough to have a working Lindenbaum's lemma, which is provided in the following by theorem \ref{th.: lindenbaum_like_lemma}.} and his canonical model. Starting from a $\Paioriginal^g$-theory $\Gamma$ we perform the change operated in the proof of proposition \ref{prop.: global_K_completeness}, i.e. defining $W$ as the set of all maximal $\Paioriginal$-consistent theories expanding $\Gamma$. Then the proof runs as in proposition \ref{prop.: global_K_completeness}.
\end{proof}

To obtain a system weak enough to be closer to Berto's project for a very general semantics, \cite{Ferguson23_subject_matter1} removes any interaction between the content of formulae with analytic implication and those formed using the other connectives. This is a refinement of Fine's suggestion in \cite{Fine86_analytic_implication} that analytic  implication is itself a concept and therefore it provides a transformative contribution to the content of any implicative formula. By detaching the action of fusion from the operation associated with the intensional arrow, Ferguson obtains a semantics in which there is no presupposition about the constraints over the topic of a conditional. 

\begin{definition}\label{def.: ferguson_ca_pai_model}
    A $\ModelFergusonPai$-model is a tuple $\pair{W,R,\langle T_w,\oplus_w,\gru_w}_{w\in W},v,\{t_w\}_{w\in W},\{h_{ww'}\}_{w,w'\in W}\rangle$, which differs from a \textit{PAI}-model only for the following: for all $w\in W, \gru_w:T_w^2\to T_w$, $t_w(\phi\to\psi)=t_w(\phi)\gru_w t_w(\psi)$, and if $w'\in R[w], h_{w,w'}:T_w\to T_{w'}$ is a homomorphism s.t. $h_{w,w'}(t_w(p))=t_{w'}(p)$, for $p\in Var$.
\end{definition}

\n The use of homomorphisms $h_{w,w'}$ is a tool to obtain persistence content preservation across the successors of a world, in fact it holds: if $w'\in R[w]$, $t_w(\phi)\leq_w t_w(\psi)$ implies $t_w(\phi)\leq_{w'} t_{w'}(\psi)$.

This class characterizes the logic of \textit{conditional-agnostic analytic implication} $\FergusonPai$ (\cite{Ferguson23_subject_matter1}), a sublogic of $\Paioriginal$ in the language $\pair{\neg,\land,\lor,\to}$. Observe that this semantics is truly agnostic for what it concerns implication, $\gru$ is simply a groupoid operation for which no non-trivial property holds: e.g., for $x,y,z\in T_w$, neither $x\gru_w y=x\oplus_w y$ nor $x\gru_w y=y\gru_w x$, nor it is compatible with the order of $\oplus_w$, e.g. $x\leq_{\oplus_w} y$ doesn't imply $z\gru_w x\leq_{\oplus_w}z\gru_w y$.

A logic very close to Parry's one, Dunn's logic of demodalized analytic implication \textbf{DAI} (\cite{Dunn72_demodalized_implication}) is considered in \cite{Ferguson23_subject_matter1} as a related case study. In the standard setting, Dunn's logic is obtained by adding to the axiomatic base $\Paioriginal$ the demodalizer axiom $\phi\to(\neg\phi\to\phi)$ (the arrow here is read as strict implication), which collapses any modal logic as strong as \textbf{T} into classical logic. The advantage of such move is to obtain a non-modal propositional logic which still enjoys the variable sharing properties of Parry's logic. Ferguson performs on $\Daifull$ the same changes done in the passage from $\Paioriginal$ to $\FergusonPai$: Dunn's logic is weakened to $\FergusonDai=\FergusonPai+(\phi\supset((\phi\supset\phi)\to\phi))$\footnote{The use of material implication in the axiom is motivated by the desire for generality. Otherwise constraints over topic inclusion should be added to the class of models that constitutes the semantics of $\FergusonDai$ to guarantee soundness.}. In this way he obtains a non-modal logic in which the conditional has no constraint over its contribution to the content of a formula. A model for this logic can either be defined as a model for $\FergusonPai$ in which every world is blind, or a single point Kripke frame, that is a classical model. The reference to demodalized logics is helpful in relation to the last author we now present.

A different approach towards a fully compositional account for intensional connectives stems from the framework of Richard Epstein's \textit{set-assignment semantics}. In \cite{Epstein90_book}, he provides a general framework for various propositional logics, furthermore he introduces two original families of intensional logical systems: \textit{dependence} and \textit{relatedness}\footnote{The interesting case of relatedness logics \textbf{S} and \textbf{R} will not be considered in this paper. The basic idea motivating the symmetric subject-matter relatedness logic \textbf{S} is that two formulae are content related when their contents overlap. The same holds for non-symmetric subject-matter relatedness logic \textbf{R}, but the picture here is more fine-grained and formulae are assigned a left and a right content, and we require the left content of the antecedent of an implication to overlap with the right content of the consequent for the two formulae to be related. See \cite{Epstein90_book} for an extensive discussion on relatedness logics. The first set-assignment semantics for \textbf{R} was provided by \cite{Krajewski91}.} logics. In the following, we are going to focus on dependence logics.

The basic intuition behind Epstein's programme (\cite{Krajewski91}) is that the logically significant properties of a sentence are not only its truth-value but its content as well. Moreover, the truth of a conditional sentence is guaranteed both by its classically material value and by a certain relation between the contents of antecedent and consequent. To extend this approach, the strategy to add an intensional layer to a Boolean connective without deviating too much from classical logic, is to add a content constraint between the components occurring under its scope. Epstein focuses on implication\footnote{\cite[p. 121]{Epstein90_book} briefly considers the case of two dependent disjunctions as well.}. In order to talk about contents, two approaches are presented: relation-based\footnote{The relation-based approach is going through a renaissance as part of the expanded project of relating semantics championed by the Torunian logic group. The main reference here is the detailed introduction by \cite{Klonowski_relating_logic_history_2021}; see also \cite{Jarmuzek_Paoli_2021}. Still in the Polish school but from a different perspective, a recent investigation of Epstein's work is \cite{Krawczyk_math_2024}.} and function-based. Set-assignment semantics belongs to latter approach, where a formula is directly assigned a content via a function that maps it to a subset of a certain fixed set.

\begin{definition}
    Let $Fm$ be the formulae in the language $\pair{\neg,\land,\lor,\to}$; given a countable set $U$, a \textit{set-assignment} function is a mapping $s:Fm\to\mathscr{P}(U)$. It is called a \textit{union set-assignment} when $s(\phi)=\bigcup\{s(p)$ $|$ $p\in Var(\phi)\}$.
\end{definition}

\n Therefore union set-assignments obey a principle of content compositionality.

Dependence logics are still two-valued, although implication becomes an intensional connective, therefore its value is also determined by the content relationship between it components. Dependence logics rise when this relationship is intended as content inclusion.

\begin{definition}\label{def.: dependence_model}
    A \textit{dependence model} is a pair $\pair{v,s}$ where $s$ is a union set-assignment and $v:Fm\to\{1,0\}$ is a Boolean valuation w.r.t. $\neg,\land,\lor$, while:
    \[ v(\phi\to\psi) = \begin{cases} 
	v(\neg\phi\lor\psi) & \text{if } s(\phi)\supseteq s(\psi); \\
	0 & \text{otherwise.}
    \end{cases}
    \]
\end{definition}

The class of dependence models is axiomatized by the logic $\EpsteinD$. This is actually a reformulation of Dunn's logic of demodalized analytic implication $\DunnDai$. Changing the relation on the first condition of the clause for $\to$ we obtain semantics characterizing different logics:

\begin{itemize}
    \item Dual dependence logic \textbf{dD}: $s(\phi) \subseteq s(\psi)$;
	
    \item Logic of equality of content \textbf{Eq}: $s(\phi) = s(\psi)$.
\end{itemize}

\cite{Epstein90_book} provides complete Hilbert-style calculi for all these logics. Ultimately, the essence of dependence logics is to allow the usual Boolean structure underlying classical logic with a set of contents which behaves like a powerset. If we look more carefully though, the entire powerset structure is redundant, what is actually playing a role is the possibility to fuse topics together, therefore we are really using only the join-semilattice reduct of $\mathscr{P}(U)$. This is the first step towards a generalization of Epstein's semantics.


	
	
	



\end{section}

\begin{section}{Framework}\label{sec: framework}

In the following, if \textbf{S} is an algebra, we denote its support by \textit{S}. Moreover, given a similarity type $\tau$, $\Fm_\tau(X)$ is the absolutely free algebra of type $\tau$ generated by a countable set $X$. We fix $X$ and use it for generating all the formula algebras mentioned in the following, therefore we will simply write $\Fm_\tau$, omitting the set of generators.

Returning to Epstein's construction of dependence models, we can unravel his procedure:
\begin{itemize}
    \item[(1)] fix the 2-element Boolean algebra as the algebra of truth-values for $Fm_{\pair{\neg,\land,\lor,\to}}$ (formulae in the Boolean language expanded with $\to$);
    \item[(2)] translate $Fm_{\pair{\neg,\land,\lor,\to}}$ into the set-theoretic language of type $\pair{\cup}$ according to the union set-assignment prescription;
    \item[(3)] fix a powerset and evaluate the translated formulae $Fm_{\pair{\cup}}$ via the mapping $s$;
    \item[(4)] using $s$, assign truth-values to the whole $Fm_{\pair{\neg,\land,\lor,\to}}$ with $v$.
\end{itemize}

The expanded Boolean language $\pair{\neg,\land,\lor,\to}$ is evaluated thanks to an auxiliary function which assign contents to formulae. Only via a detour through $\mathscr{P}(U)$ we can recursively apply $v$ to the entire object language. This additional content comes into play only for the fragment of language which exceeds the Boolean one, that is for $\to$, this is why valuations are still homomorphisms w.r.t. standard Boolean formulae.

In this strategy there are two limitations: first, there are implicit elements, mainly the algebra of truth-values (the 2-element Boolean algebra) and the way in which the language of this algebra and the powerset communicate; second, the strength of the assumed set-theoretic background, that is using a powerset as the algebra of contents. Allowing a degree of freedom over these contexts leads us to a more customizable procedure. Let $\rho_0\supseteq\rho_A$ be our object language:

\begin{itemize}
    \item[(1)] take an algebra \textbf{A} (of extensional values) of type $\rho_A$;
    \item[(2)] translate $Fm_{\rho_0}$ into the language $\rho_B$;
    \item[(3)] take an algebra \textbf{B} (of intensional values) of type $\rho_B$ and define a valuation $s:Fm_{\rho_B}\to B$;
    \item[(4)] using $s$, define a valuation of $v:Fm_{\rho_0}\to A$.
\end{itemize}

We have finally reached a full generalization\footnote{It is interesting to note how \cite{Nowak_2008}'s investigation of semantics for some axiomatic extensions of Suszko's sentential calculus with identity seems to underlie a similar conception to the one detailed above, with the use of an auxiliary algebra of contents and an implicit notion of translation between types.} of Epstein's semantics:

\begin{definition} \label{def.: gen_Epstein_model}
    A \textit{general Epstein model} ($gE$-model) for a language of type $\rho_0$ is a tuple $\pair{\A,\B, N,v_A,v_B}$. $\A$ is an algebra of type $\rho_A\subseteq\rho_0$, $\B$ is an algebra of type $\rho_B$. $ N: Fm_{\rho_0} \to Fm_{\rho_B}$ is a mapping that satisfies $ N (x) = x$ and $ N(\alpha(x_1/\beta_1, ...$ $x_n/\beta_n)) =  N(\alpha) (x_1/ N(\beta_1), ...$ $x_n/ N(\beta_n))$ for all variables $x, x_1, ...$ $x_n \in Var$, and formulae $\alpha, \beta_1, ...$ $\beta_n \in Fm_{\rho_0}$. Finally, $v_B: Fm_{\rho_B} \to B$ is a homomorphism, while the mapping $v_A: Fm_{\rho_0} \to A$ is required to be a homomorphism only w.r.t. the symbols of $\rho_A$\footnote{It would be natural to introduce the notion of $gE$-frame as a tuple $\pair{\A,\B, N}$, adapting the following definitions of validity to the case of validity in a frame, as validity in every model built over a frame, therefore working on the higher level of validity. We will not pursue that path, because it could lead to confusion with the traditionally consolidated notion of Kripke frame, which will appear frequently as possible world semantics is the main framework of comparison for our general Epstein semantics. Moreover there would be no essential use of the category of frames over that of models.}.
\end{definition}

\n By $\rho_0$ we denote the proper object language, which sometimes we will call \textit{external language}, which in all the interesting cases will exceed the \textit{internal language} $\rho_A$. The intuitive reading of the above structure is that $\A$ can be considered the \textit{algebra of truth-values} and $v_A$ is an assignment of values (though not necessarily a full homomorphism) of the formulae of type $\rho_0$. Observe that the lack of any constraint over the behaviour of $v$ in the case of symbols of $\rho_0$ belonging outside of $\rho_A$ is the mechanism that we will employ to add some specific intensional connotation to the interpretation of said symbol, a connotation that the algebra of truth-values alone is not capable to provide. For this reason the valuation of the symbols strictly belonging to the external language will be determined also by $v_B$. This leads us to the structure $\B$, the \textit{algebra of contents}, and the assignment $v_B$ for formulae of type $\rho_B$. In order to provide a content to the formulae of the object language, namely $\rho_0$, a translation is needed. Said translation\footnote{The use of a translation was inspired by the naming functor introduced by \cite{Gratzer_agassiz_sum_1974} and \cite{Graczynska_agassiz_sum_1978} in their theory of Agassiz sums, a generalization of P\l onka sums. I am grateful to Francesco Paoli for pointing me towards these references.} is the mapping $ N$, which transforms the formulae of $Fm_{\rho_0}$ into those of $Fm_{\rho_B}$ while obeying an appropriate closure under substitutions\footnote{The use of a translation function distinct from the content assigning function $v_B$ follows the intention of making explicit all the elements at work in original Epstein's semantics. In the context of union set-assignment, said translation was already incorporated in the mapping $s$. The same choice can be made here, removing $N$ from the model and adding further properties to $v_B$, although losing the quality of being a homomorphism. There is no essential difference between the two approaches.}. We will sometimes employ the names algebra of extensional values and algebra of intensional values to, respectively, $\A$ and $\B$.

We already introduce a convention that bends slightly definition \ref{def.: gen_Epstein_model}. Despite we require the type of the object language $\rho_0$ to contain the type of the language of the algebra of truth-values $\rho_A$, we will ignore the constant operations from $\rho_A$ as long as they can be expressed as algebraic constants. E.g., we are going to work with Boolean algebras, yet is none of the object languages we will consider there will be the constants $0$ and $1$. This convention will allow us to greatly simplify the axiomatic systems.

Standard dependence models are a particular case of $gE$-models, in fact they are tuples $\pair{\mathbf{B}_2,\pair{\mathscr{P}(U),\cup},N,v,s}$, where \textbf{B}$_2$ is the 2-element Boolean algebra, $N:Fm_\pair{\neg,\land,\lor,\to}\to Fm_\pair{\cup}$ deletes negations and transforms every binary connectives into set-union, $s:Fm_\pair{\cup}\to\mathscr{P}(U)$ is a homomorphism and $v:Fm_\pair{\neg,\land,\lor,\to}\to\{1,0\}$ is as in definition \ref{def.: dependence_model}, with:
\[ 
    v(\phi\to\psi) = \begin{cases} 
    v(\neg\phi\lor\psi) & \text{if } s(N(\phi))\supseteq s(N(\psi)); \\
    0 & \text{otherwise.}
    \end{cases}
\]

Let us call the above a $D^+$-model. If we change the relation in the first condition for $\to$ to $\subseteq$ (respectively, =) we obtain $dD^+$-models ($Eq^+$-models). The following in an easy exercise:

\begin{theorem}
    For $\Le\in\lbrace\mathbf{D},\mathbf{dD},\mathbf{Eq}\rbrace$, $\Le$ is complete w.r.t. the class of $L^+$-models.
\end{theorem}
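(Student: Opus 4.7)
The plan is to reduce the statement to Epstein's known completeness theorems for $\EpsteinD$, $\mathbf{dD}$, and $\mathbf{Eq}$ over the original dependence models of Definition \ref{def.: dependence_model} (and its $\mathbf{dD}/\mathbf{Eq}$ variants), by showing that $L$-models and $L^+$-models validate exactly the same formulae. Since the content side-condition on $\to$ (respectively $\supseteq$, $\subseteq$, $=$) is the same in both setups, it suffices to exhibit a content-preserving correspondence between the two classes of models, in both directions.

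In the forward direction, given any $L^+$-model $\pair{\mathbf{B}_2,\pair{\mathscr{P}(U),\cup},N,v,s}$, define $s^{\ast}:Fm_{\pair{\neg,\land,\lor,\to}}\to\mathscr{P}(U)$ by $s^{\ast}(\phi):=s(N(\phi))$. Since $N$ deletes negations and collapses each binary connective into $\cup$, while $s$ is a $\pair{\cup}$-homomorphism, an immediate induction on $\phi$ yields $s^{\ast}(\phi)=\bigcup\{s(p)\mid p\in Var(\phi)\}$, so $s^{\ast}$ is a union set-assignment. The content clause for $v(\phi\to\psi)$ in the $L^+$-model refers to $s(N(\phi))$ and $s(N(\psi))$, which equal $s^{\ast}(\phi)$ and $s^{\ast}(\psi)$ by construction; hence $\pair{v,s^{\ast}}$ is an $L$-model in Epstein's original sense, and it agrees with the $L^+$-model we started from on every formula.

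For the converse direction, starting from an Epstein $L$-model $\pair{v,s^{\ast}}$ with reference set $U$, let $N$ be the canonical translation deleting negations and sending every binary connective to $\cup$, and let $s:Fm_{\pair{\cup}}\to\mathscr{P}(U)$ be the unique $\pair{\cup}$-homomorphism with $s(p)=s^{\ast}(p)$ for each variable $p$. The union set-assignment property of $s^{\ast}$ then gives $s(N(\phi))=s^{\ast}(\phi)$ for every $\phi$, so $\pair{\mathbf{B}_2,\pair{\mathscr{P}(U),\cup},N,v,s}$ is an $L^+$-model with the same validities. Combining both directions with Epstein's completeness results delivers the theorem. The only bookkeeping step worth flagging is that the $N$ just defined satisfies the substitution condition of Definition \ref{def.: gen_Epstein_model}, but this is routine since $N$ is defined by recursion on formulae and acts as the identity on variables; so I do not expect any substantive obstacle, the argument being essentially a change of notation.
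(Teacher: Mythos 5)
Your proposal is correct, and it fills in exactly the argument the paper has in mind: the paper offers no proof, dismissing the theorem as "an easy exercise" precisely because $L^+$-models are Epstein's dependence models up to the bookkeeping of making $N$ and $s$ explicit, which is the correspondence $s^{\ast}(\phi)=s(N(\phi))$ you spell out in both directions. Nothing further is needed.
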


As we mentioned, there is no essential use of the structure of the powerset $\mathscr{P}(U)$ in the semantics for dependence logic, just the properties of a join-semilattice. Similarly, for what it concerns us the 2-element Boolean algebra has no privileged role over the entire variety of Boolean algebras. Define therefore a $gL$-model, for $\Le\in\lbrace\mathbf{D},\mathbf{dD},\mathbf{Eq}\rbrace$, as a tuple $\pair{\mathbf{B},\pair{S,\oplus},N,v,s}$, s.t. \textbf{B} is a Boolean algebra, $\pair{S,\oplus}$ is a join-semilattice, $N:Fm_\pair{\neg,\land,\lor,\to}\to Fm_\pair{\oplus}$ deletes negations and transforms binary connectives into join, $s:Fm_\pair{\oplus}\to S$ is a homomorphism and $v$ is a Boolean homomorphism w.r.t. Boolean connectives, and:
\[ v(\phi \to \psi) = \begin{cases} 
	v(\neg\phi \lor \psi) & \text{if } C_L; \\
	0 & \text{otherwise.}
\end{cases}
\] 

\n where condition $C_L$ changes according to $\Le$:
\begin{itemize}
	\item[(\textbf{D})] $s(N(\phi))\supseteq s(N(\psi))$;
	\item[(\textbf{dD})] $s(N(\phi))\subseteq s(N(\psi))$;
	\item[(\textbf{Eq})] $s(N(\phi))=s(N(\psi))$.
\end{itemize}

\n The following will be proved as theorem \ref{th.: epstein_logics_correspondence}: for $\Le\in\lbrace\mathbf{D},\mathbf{dD},\mathbf{Eq}\rbrace$, $\Le$ is complete w.r.t. the class of $gL$-models\footnote{I will briefly mention an equivalent theorem for the symmetric relatedness logic \textbf{S} (\cite[ch. III]{Epstein90_book}). In a symmetric relatedness model $\pair{v,s}$ the union set-assignment $s$ is required to be non-empty (i.e., $s(p)\neq\emptyset$, for all $p\in Var$), and the condition for $\phi\to\psi$ to be evaluated as a material implication is that $s(\phi)\cap s(\psi)\neq\emptyset$. This class characterizes the logic \textbf{S}. In generalized Epstein semantics we obtain a complete semantics by employing models of the form $\pair{\mathbf{B},\pair{L,\sqcap,\sqcup,0},N,v,s}$, which differ from the previous $gL$-models for using a lower bounded distributive lattice $\pair{L,\sqcap,\sqcup,0}$ as algebra of contents and the condition $C_L$ becomes ``if $s(N(\phi))\sqcap s(N(\psi))\neq 0"$. The situation for the logic \textbf{R} is more complicated and yet to be studied, as its set-assignment semantics by \cite{Krajewski91} in terms of pairs of assignment functions is less straightforward.}.






\end{section}

\begin{section}{Global logics of analytic implication}\label{sec: global_Parry_logic}

Let $\lang_\mathbf{ML}^\to=\pair{\neg,\lor,\Box,\to}$ be the language of standard modal logic expanded with a binary connective $\to$, which denotes the intensional arrow of analytic implication. The following abbreviations are common: $\phi\land\psi := \neg(\neg\phi\lor\neg\psi)$, $\phi\supset\psi := \neg\phi\lor\psi$, $\phi\equiv\psi := (\phi\supset\psi)\land(\psi\supset\phi)$. Moreover we define:
$$\phi\prec\psi:=\psi\to(\phi\lor\neg\phi)$$

\n In a moment, after introducing the semantics, it will be possible to appreciate the usefulness of this abbreviation.

In the following we are going to employ interior algebras. An \textit{interior algebra} $\A=\pair{A,\land,\lor,\neg,\Box,0,1}$ is a Boolean algebra expanded with a unary $\Box$ operation such that:
\begin{itemize}
    \item[(EqK1)] $\Box 1\approx 1$;
    \item[(EqK2)] $\Box(x\land y)\approx\Box x\land\Box y$;
    \item[(EqT)] $\Box x\approx\Box x\land x$;
    \item[(Eq4)] $\Box x\approx\Box x\land\Box\Box x$.
\end{itemize} 

\n To make proofs easier to read, we are going to actually employ the $\land$-free reduct of interior algebras, calling them interior algebras for simplicity. We keep using the $\land$ operation, treating it as term-defined.

\begin{definition}\label{def.: pai0-model}
    A $\ModelPaizero$-model is a $gE$-model $\pair{\B,\Se, N,v,s}$ for $\lang_\mathbf{ML}^\to$. $\B$ is an interior algebra. $\Se=\pair{S,\oplus,\gru}$ is composed of a join-semilattice $\pair{S,\oplus}$, with $\leq_\oplus$ the induced partial order, extended by a binary operation $\gru:S^2\to S$ without any constraint. The translation $ N:\Fmobject\to Fm_{\pair{\oplus,\gru}}$ satisfies:
    \begin{itemize}
        \item $ N(\neg\phi)= N(\Box\phi)= N(\phi)$;
        \item $ N(\phi\lor\psi)= N(\phi)\oplus N(\psi)$;
        \item $ N(\phi\to\psi)= N(\phi)\gru N(\psi)$.
    \end{itemize}

    \n $s:Fm_{\pair{\oplus,\gru}}\to S$ is a homomorphism. $v:\Fmobject\to B$ is a homomorphism w.r.t. $\neg,\lor,\Box$, moreover it satisfies the following:
    $$v(\phi\to\psi)=\begin{cases} 
	\Box^{\B} v(\neg\phi\lor\psi) & \text{if } s( N(\psi))\leq_\oplus s( N(\phi));\\
        0^{\B} & \text{otherwise.}
        \end{cases}$$
        
\end{definition}


Notice that the abbreviation $\phi\prec\psi$ is actually a syntactic way to express a relation between contents: in any $\ModelPaizero$-model, $v(\phi\prec\psi)=v(\psi\to(\phi\lor\neg\phi))=1$ iff $s( N(\phi))\leq_\oplus s( N(\psi))$. Notice also why we didn't decide to reduce the type and define $\Box$ in term of $\to$. In many logics equipped with a strict implication $\strictif$, the necessity operator is definable as $\Box\phi:=(\phi\strictif\phi)\strictif\phi$. In order to respect the content relation required by the semantic of $\to$, a sensible candidate could be $\Box\phi:=(\phi\lor\neg\phi)\to\phi$. The problem is that while the extensional values of the two formulae coincide, that is not the case for their contents: $s( N(\Box\phi))=s( N(\phi))$ and $s( N(\phi\lor\neg\phi)\to\phi)=s( N(\phi)\gru N(\phi))$, which in general are different. 

To see that it is impossible to express $\Box$ with $\to$ plus the Boolean base of connectives, observe that by the definition of $N$ the translation of any formula $\phi$  is a join of elements either of the form $N(p)$, for $p\in Var$, or of the form $N(\phi)\gru N(\psi)$. We can omit either ordering and repetitions, since we are working in a semilattice. Therefore any formula in which $\to$ occurs will contain in a component of the join of the latter form. Now, let $\phi$ be a formula with no occurrences of $\to$. We have $N(\Box\phi)=N(\phi)$, and in particular no element of the form $N(\psi)\gru N(\chi)$ appears in the join $N(\phi)$. Suppose $\Box$ is a definable connective, then it must be definable by a formula $\phi^\to$ containing $\to$ (otherwise their extensional values would differ, by standard modal logic), then some $N(\psi)\gru N(\chi)$ appears in $N(\phi^\to)$. Since no conditions are imposed over $\gru$, a countermodel can be constructed where $N(\psi)\gru N(\chi)$ is assigned a value s.t. $N(\phi)\neq N(\phi^\to)$ (e.g., taking a model where the algebra of contents is the integer line and putting $N(\psi)\gru N(\chi)> N(\phi)$).

The class of $\ModelPaizero$-models characterizes the logic $\Paizero=\pair{\lang_\mathbf{ML}^\to,\vdash_{\Paizero}}$, as we are going to prove. $\Paizero$ is a sublogic of the \textit{global} companion of Parry's logic of analytic implication and it can be presented by the following Hilbert-style system:

\begin{itemize}
    \item[] \textbf{Axioms}
    \item[(A1)] $\phi\supset(\psi\supset\phi)$
    \item[(A2)] $(\phi\supset(\psi\supset\chi))\supset((\phi\supset\psi)\supset(\phi\supset\chi))$
    \item[(A3)] $(\neg\phi\supset\neg\psi)\supset(\psi\supset\phi)$
    \item[(A4)] $(\phi\to\psi)\equiv(\Box(\phi\supset\psi)\land(\psi\prec\phi))$
    \item[(K)] $\Box(\phi\supset\psi)\supset(\Box\phi\supset\Box\psi)$
    \item[(T)] $\Box\phi\supset\phi$
    \item[(4)] $\Box\phi\supset\Box\Box\phi$
    \item[(O1)] $\phi\prec\phi$
    \item[(O2)] $((\phi\prec\psi)\land(\psi\prec\chi))\supset(\phi\prec\chi)$
    \item[(O3)] $(\phi\prec\psi)\supset(\phi\lor\psi\prec\psi)$
    \item[(O4)] $((\phi\prec\psi)\equiv(\phi\prec\neg\psi))\land((\phi\prec\psi)\equiv(\neg\phi\prec\psi))$
    \item[(O5)] $((\phi\prec\psi)\equiv(\phi\prec\Box\psi))\land((\phi\prec\psi)\equiv(\Box\phi\prec\psi))$
    \item[(C1)] $(\phi\prec\phi\lor\psi)\land(\psi\prec\phi\lor\psi)$
    \item[(C2)] $((\phi\prec\chi)\land(\psi\prec\chi))\supset((\phi\lor\psi)\prec\chi)$
    \item[(C3)] $((\phi\prec\chi)\land(\chi\prec\phi)\land(\psi\prec\zeta)\land(\zeta\prec\psi))\supset((\phi\to\psi)\prec(\chi\to\zeta))$
    \item[] \textbf{Rules}
    \item[(MP)] $\phi,\phi\supset\psi\vdash\psi$
    \item[(Nec$_g$)] $\phi\vdash\Box\phi$
\end{itemize}

\n Observe that (A1-3), (K), (T), (4), (MP), (Nec$_g$) provides a complete axiomatization of the global modal logic $\Sfourglobal$. Moreover, using (A4) and (T) it is derivable \textit{modus ponens} for the intensional arrow: $\phi,\phi\to\psi\vdash\psi$. As usual, for a logic $\logic$ by a $\logic$-theory we mean a set $X$ closed under the relation $\vdash_\logic$. When the underlying logic is clear, we omit the prefix and simply call it a theory.

In order to provide a canonical model for the completeness proof, we need to separately build its two main components, the algebra of truth-values and that of contents. Both the algebras will be built from a certain theory. We start with the algebra of contents.

To make the nomenclature simpler to read, in the following by $Fm$ we mean $\Fmobject$. Let $\Gamma\subseteq Fm$ be a theory of $\Paizero$. Let us consider the formula algebra $\Fm_\pair{\oplus,\gru}$ and the translation function of any $\ModelPaizero$-model $ N$, which is surjective, therefore $Fm_\pair{\oplus,\gru} =  N[Fm]$. For $\phi,\psi\in Fm$, we define the relation over $ N[Fm]$: $ N(\phi)\leq_\Gamma N(\psi) \Leftrightarrow \phi\prec\psi\in\Gamma$. Due to negation and box transparency, many formulae have the same translation through $ N$ when negation or box are involved, nonetheless $\leq_\Gamma$ is well-defined, thanks to (O4-5). By (O1-2), $\leq_\Gamma$ is a preorder over $ N[Fm]$.

Next we define: $ N(\phi)\sim_\Gamma N(\psi) \Leftrightarrow \phi\leq_\Gamma\psi$ and $\psi\leq_\Gamma\phi$. This relation is an equivalence, moreover it is a congruence.

\begin{itemize}
    \item[($\oplus$)] Let $ N(\phi_1)\sim_\Gamma N(\psi_1)$ and $ N(\phi_2)\sim_\Gamma N(\psi_2)$. In particular $ N(\phi_1)\leq_\Gamma N(\psi_1)$, so by definition $\phi_1\prec\psi_1\in\Gamma$. $\Gamma$ is a theory, so $\psi_1\prec(\psi_1\lor\psi_2)\in\Gamma$ by (C1). Similarly, from $ N(\phi_2)\leq_\Gamma N(\psi_2)$ it follows $\phi_2\prec(\psi_1\lor\psi_2)\in\Gamma$. By (C2) we have $(\phi_1\lor\phi_2)\prec(\psi_1\lor\psi_2)\in\Gamma$, therefore $ N(\phi_1)\oplus N(\phi_2)\leq_\Gamma N(\psi_1)\oplus N(\psi_2)$. By a similar reasoning, from $ N(\psi_1)\leq_\Gamma N(\phi_1)$ and $ N(\psi_2)\leq_\Gamma N(\phi_2)$ we obtain $ N(\psi_1)\oplus N(\psi_2)\leq_\Gamma N(\phi_1)\oplus N(\phi_2)$. We conclude $ N(\phi_1)\oplus N(\phi_2)\sim_\Gamma N(\psi_1)\oplus N(\psi_2)$.

    \item[($\gru$)] Let $ N(\phi_1)\sim_\Gamma N(\psi_1)$ and $ N(\phi_2)\sim_\Gamma N(\psi_2)$. Since $\phi_1\prec\psi_1,\psi_1\prec\phi_1,\phi_2\prec\psi_2,\psi_2\prec\phi_2\in\Gamma$, by (C3) $(\phi_1\to\phi_2)\prec(\psi_1\to\psi_2), (\psi_1\to\psi_2)\prec(\phi_1\to\phi_2)\in\Gamma$, hence $ N(\phi_1)\gru N(\phi_2)\sim_\Gamma N(\psi_1)\gru N(\psi_2)$.
\end{itemize}

\n The compatibility with operations allows us to obtain the quotient algebra $\langle N[Fm]/$$\sim_\Gamma,\oplus,\gru\rangle$.

\begin{lemma}\label{lemma: pai0_partial_order}
    Let $\Gamma$ be a theory. For $\phi,\psi\in Fm$, it holds: $[ N(\phi)]_{\sim_\Gamma}\leq_\oplus[ N(\psi)]_{\sim_\Gamma}\Leftrightarrow N(\phi)\leq_\Gamma N(\psi)$.
\end{lemma}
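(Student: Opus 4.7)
The plan is to unfold the definition of the induced partial order in the quotient semilattice and then reduce the biconditional to two short syntactic derivations inside $\Gamma$ using the content axioms already singled out in the paper. Recall that in any join-semilattice $\pair{S,\oplus}$, the induced order satisfies $x \leq_\oplus y$ iff $x \oplus y = y$. Applied to the quotient $\langle N[Fm]/\!\sim_\Gamma,\oplus\rangle$, this gives
\[
[N(\phi)]_{\sim_\Gamma} \leq_\oplus [N(\psi)]_{\sim_\Gamma} \;\Longleftrightarrow\; N(\phi\lor\psi) \sim_\Gamma N(\psi),
\]
since $N(\phi)\oplus N(\psi) = N(\phi\lor\psi)$ by the clauses defining $N$. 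So the lemma amounts to the equivalence between $\phi\prec\psi\in\Gamma$ and the pair $\phi\lor\psi\prec\psi,\ \psi\prec\phi\lor\psi\in\Gamma$.

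For the $(\Leftarrow)$ direction, suppose $N(\phi)\leq_\Gamma N(\psi)$, i.e.\ $\phi\prec\psi\in\Gamma$. Axiom (O3), together with (MP), immediately yields $(\phi\lor\psi)\prec\psi\in\Gamma$. On the other hand, (C1) gives $\psi\prec(\phi\lor\psi)\in\Gamma$ outright. These two facts together say exactly $N(\phi\lor\psi)\sim_\Gamma N(\psi)$, which by the opening observation is $[N(\phi)]_{\sim_\Gamma}\leq_\oplus[N(\psi)]_{\sim_\Gamma}$.

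For the $(\Rightarrow)$ direction, assume $[N(\phi)]_{\sim_\Gamma}\leq_\oplus[N(\psi)]_{\sim_\Gamma}$, so $N(\phi\lor\psi)\sim_\Gamma N(\psi)$, whence in particular $(\phi\lor\psi)\prec\psi\in\Gamma$. Combine this with $\phi\prec(\phi\lor\psi)\in\Gamma$, which again is an instance of (C1), and apply transitivity of $\prec$ via (O2) and (MP) inside the theory $\Gamma$ to conclude $\phi\prec\psi\in\Gamma$, that is $N(\phi)\leq_\Gamma N(\psi)$.

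There is no real obstacle here: everything reduces to checking that the syntactic preorder $\leq_\Gamma$ is exactly the semilattice order of the quotient, and the only subtle point is the bookkeeping passage from ``$[N(\phi)]\oplus[N(\psi)]=[N(\psi)]$'' to the two-sided $\sim_\Gamma$-statement about $N(\phi\lor\psi)$ and $N(\psi)$, which is then discharged by (O3) and (C1) in one direction and (C1) plus (O2) in the other. Axioms (O4)--(O5), already used to justify that $\leq_\Gamma$ is well-defined on $N[Fm]$, are what guarantees that choosing different $\phi,\psi$ with the same translations does not affect any step above.
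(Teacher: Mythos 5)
Your proof is correct and follows essentially the same route as the paper's: both directions unfold $[N(\phi)]_{\sim_\Gamma}\leq_\oplus[N(\psi)]_{\sim_\Gamma}$ into $N(\phi\lor\psi)\sim_\Gamma N(\psi)$ and then discharge the equivalence with $\phi\prec\psi\in\Gamma$ using exactly (C1) and (O2) for one direction and (O3) and (C1) for the other. The additional remark about (O4)--(O5) securing well-definedness of $\leq_\Gamma$ is consistent with the discussion the paper places just before the lemma.
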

\begin{proof}
    ($\Rightarrow$) Let $[ N(\phi)]_{\sim_\Gamma}\leq_\oplus[ N(\psi)]_{\sim_\Gamma}$, that is $[ N(\psi)]_{\sim_\Gamma}=[ N(\phi)]_{\sim_\Gamma}\oplus[ N(\psi)]_{\sim_\Gamma}=[ N(\phi)\oplus N(\psi)]_{\sim_\Gamma}$. Hence $ N(\psi)\sim_\Gamma N(\phi)\oplus N(\psi)= N(\phi\lor\psi)$. This implies that $(\phi\lor\psi)\prec\psi\in\Gamma$. Since by (C1) $\phi\prec(\phi\lor\psi)\in\Gamma$, then $\phi\prec\psi\in\Gamma$ by (O2). We conclude $ N(\phi)\leq_\Gamma N(\psi)$.

    \vspace{0.2cm}

    ($\Leftarrow$) Let $ N(\phi)\leq_\Gamma N(\psi)$, so $\phi\prec\psi\in\Gamma$. Using (O3) $(\phi\lor\psi)\prec\psi\in\Gamma$, by (C1) $\psi\prec(\phi\lor\psi)\in\Gamma$. Hence $ N(\psi)\sim_\Gamma N(\phi)\oplus N(\psi)$, therefore $[ N(\psi)]_{\sim_\Gamma}=[ N(\phi)\oplus N(\psi)]_{\sim_\Gamma}=[ N(\phi)]_{\sim_\Gamma}\oplus[ N(\psi)]_{\sim_\Gamma}$, that is $[ N(\phi)]_{\sim_\Gamma}\leq_\oplus[ N(\psi)]_{\sim_\Gamma}$.
\end{proof}

The previous lemma proves that $\leq_\oplus$ is a partial order, since it inherits reflexivity and transitivity from $\leq_\Gamma$. Antisymmetry follows from definition of $\sim_\Gamma$: if $[ N(\phi)]_{\sim_\Gamma}\leq_\oplus[ N(\psi)]_{\sim_\Gamma}$ and $[ N(\psi)]_{\sim_\Gamma}\leq_\oplus[ N(\phi)]_{\sim_\Gamma}$, then $ N(\phi)\leq_\Gamma N(\psi)$ and $ N(\psi)\leq_\Gamma N(\phi)$, hence $ N(\phi)\sim_\Gamma N(\psi)$.

We conclude that the quotient algebra $\langle N[Fm]/$$\sim_\Gamma,\oplus,\gru\rangle$ is actually a suitable algebra of contents for a $\ModelPaizero$-model, in fact $\langle N[Fm]/$$\sim_\Gamma,\oplus\rangle$ is now join-semilattice. We are going to make use of the following consequence of lemma \ref{lemma: pai0_partial_order}:

\begin{corollary}\label{cor.: pai0_order_correspondence}
    Let $\Gamma$ be a theory. For $\phi,\psi\in Fm$, it holds: $[ N(\phi)]_{\sim_\Gamma}\leq_\oplus[ N(\psi)]_{\sim_\Gamma}\Leftrightarrow\phi\prec\psi\in\Gamma$.
\end{corollary}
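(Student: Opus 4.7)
The statement chains together two equivalences already established in the excerpt, so my plan is simply to compose them without any additional combinatorial work. The previous lemma \ref{lemma: pai0_partial_order} gives the equivalence $[N(\phi)]_{\sim_\Gamma}\leq_\oplus[N(\psi)]_{\sim_\Gamma}\Leftrightarrow N(\phi)\leq_\Gamma N(\psi)$, and the relation $\leq_\Gamma$ on $N[Fm]$ was defined precisely by $N(\phi)\leq_\Gamma N(\psi) \Leftrightarrow \phi\prec\psi\in\Gamma$. Transitivity of ``iff'' then immediately delivers the stated biconditional.

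Concretely, I would write a one-line proof: for the forward direction, assume $[N(\phi)]_{\sim_\Gamma}\leq_\oplus[N(\psi)]_{\sim_\Gamma}$, apply lemma \ref{lemma: pai0_partial_order} to obtain $N(\phi)\leq_\Gamma N(\psi)$, and unfold the definition of $\leq_\Gamma$ to conclude $\phi\prec\psi\in\Gamma$; for the converse, reverse the steps. Alternatively one could simply remark that the corollary is the composition of the equivalence in lemma \ref{lemma: pai0_partial_order} with the definitional equivalence for $\leq_\Gamma$.

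The only subtlety worth flagging, although it is already absorbed into the lemma's proof, is that $\leq_\Gamma$ is well-defined on $N[Fm]$ because the translation $N$ collapses negation and box, which might a priori cause ambiguity; but this was handled by invoking the axioms (O4) and (O5) earlier in the section, so nothing new needs to be added here. There is no genuine obstacle in this corollary; it is a bookkeeping statement whose purpose is to make the subsequent canonical-model construction easier to cite, by directly relating membership of a formula of the form $\phi\prec\psi$ in $\Gamma$ to the order on the quotient algebra of contents.
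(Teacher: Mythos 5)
Your proof is correct and matches the paper's (implicit) justification exactly: the corollary is obtained by composing the equivalence of Lemma \ref{lemma: pai0_partial_order} with the definitional equivalence $N(\phi)\leq_\Gamma N(\psi)\Leftrightarrow\phi\prec\psi\in\Gamma$, which is why the paper states it without a separate proof. Your remark about well-definedness of $\leq_\Gamma$ via (O4)--(O5) is accurate but, as you note, already handled earlier in the section.
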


Now we are left with the algebra of truth-values for the canonical model. To build it, we will follow an algebraic completeness proof for the global modal logic $\Sfourglobal$ w.r.t. the class of interior algebras. We recall that $\Sfourglobal$ is (A1-3), (K), (T), (4), (MP), (Nec$_g$). We prove completeness using the so-called Lindenbaum-Tarski process, a standard technique in algebraic logic (see \cite[ch. 2.1]{Font16_handbook}).

We consider the basic modal language $\pair{\neg,\lor,\Box}$ and we denote its formula algebra $\Fm_m$. We take a $\Sfourglobal$-theory $\Gamma$. For $\phi,\psi\in Fm_m$, let us define the relation $\Omega\Gamma$ s.t. $\pair{\phi,\psi}\in\Omega\Gamma\Leftrightarrow\phi\equiv\psi\in\Gamma$. 

$\Omega\Gamma$ is a congruence over $\Fm_m$. Compatibility w.r.t. $\neg$ and $\lor$ is immediate from classical logic. For the remaining connective, suppose $\pair{\phi,\psi}\in\Omega\Gamma$, which means $\phi\equiv\psi\in\Gamma$. By (Nec$_g$) $\Box(\phi\equiv\psi)\in\Gamma$, hence using (K) and (MP) we conclude $\Box\psi\equiv\Box\psi\in\Gamma$, that is $\pair{\Box\phi,\Box\psi}\in\Omega\Gamma$.

With this congruence we obtain the quotient algebra $\Fm_m/\Omega\Gamma$, which is known as the Lindenbaum-Tarski algebra of the theory $\Gamma$. This is an interior algebra, in fact for every equation $\phi\approx\psi$ of the equational base of Boolean algebras, it is easy to prove that $\vdash_{\Sfourglobal}\phi\equiv\psi$. Using the well-known completeness of $\Sfourglobal$ w.r.t. preordered Kripke frames, we can simplify calculations and obtain a straightforward proof of the same fact for the modal equations which constitute the equational base of interior algebras.

\begin{lemma}\label{lemma: S4_lind_tarski_process}
    Let $\Gamma$ be a $\Sfourglobal$-theory and $\Fm_m/\Omega\Gamma$ the corresponding Lindenbaum-Tarski algebra. For all $\phi\in Fm_m$, $\phi\in\Gamma$ iff $[\phi]_{\Omega\Gamma}=1_\Gamma$, with $1_\Gamma$ the top element of $Fm_m/\Omega\Gamma$.
\end{lemma}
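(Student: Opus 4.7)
The plan is the standard Lindenbaum-Tarski argument, so the heart of the matter is just identifying the top element $1_\Gamma$ correctly and then exploiting that $\Gamma$ is closed under the classical derivations that the equational base of Boolean algebras guarantees.

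First I would fix a canonical tautology, say $\top := p \lor \neg p$, and show that $1_\Gamma = [\top]_{\Omega\Gamma}$. To do this I check that for every $\phi \in Fm_m$ we have $[\phi]_{\Omega\Gamma} \leq [\top]_{\Omega\Gamma}$ in the Lindenbaum-Tarski algebra, which under the Boolean-algebra reading amounts to showing $\phi \lor \top \equiv \top \in \Gamma$. Since $\phi \lor (p \lor \neg p) \equiv (p \lor \neg p)$ is a classical tautology and $\Gamma$, being a $\Sfourglobal$-theory, contains every classical tautology and is closed under (MP), this is immediate. Together with the fact that any tautology $\psi$ satisfies $\psi \equiv \top \in \Gamma$ (so all tautologies collapse to the same class), this pins down $1_\Gamma$.

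For the forward implication, assume $\phi \in \Gamma$. Since $\top \in \Gamma$ as well (it is a classical tautology), by the propositional reasoning encoded in (A1)--(A3) and (MP) we obtain $\phi \equiv \top \in \Gamma$: from $\phi, \top \in \Gamma$ we derive both $\phi \supset \top$ and $\top \supset \phi$, and hence their conjunction. Therefore $\pair{\phi,\top} \in \Omega\Gamma$, so $[\phi]_{\Omega\Gamma} = [\top]_{\Omega\Gamma} = 1_\Gamma$.

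For the converse, assume $[\phi]_{\Omega\Gamma} = 1_\Gamma = [\top]_{\Omega\Gamma}$. By definition of $\Omega\Gamma$ this yields $\phi \equiv \top \in \Gamma$, and in particular $\top \supset \phi \in \Gamma$. Since $\top \in \Gamma$ (again a classical tautology), a single application of (MP) gives $\phi \in \Gamma$. I do not anticipate any real obstacle here: the only thing one has to be attentive to is that $\Gamma$ is assumed to be a $\Sfourglobal$-theory, hence closed under (MP) and containing all theorems of $\Sfourglobal$, which is precisely what the two directions require; the modal axioms and (Nec$_g$) played their role already in verifying that $\Omega\Gamma$ is a congruence and that $\Fm_m/\Omega\Gamma$ is an interior algebra, not in the present equivalence.
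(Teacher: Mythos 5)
Your proof is correct and follows essentially the same route as the paper: both identify $1_\Gamma$ with the equivalence class of a classical tautology (the paper uses an arbitrary tautology $\tau$ where you fix $\top:=p\lor\neg p$), and both directions reduce to deriving $\phi\equiv\tau\in\Gamma$ from $\phi\in\Gamma$ and conversely using closure of $\Gamma$ under (MP). No gaps.
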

\begin{proof}
    First we show that $\phi\in\Gamma$ iff $[\phi]_{\Omega\Gamma}\in[\Gamma]_{\Omega\Gamma}$. For the right-to-left direction, suppose $\phi\in\Gamma$ and $\pair{\phi,\psi}\in\Omega\Gamma$, that is $\phi\equiv\psi\in\Gamma$. By classical logic it follows $\psi\in\Gamma$. The other direction is obvious. Now notice that for $\phi,\psi\in\Gamma$, $[\phi]_{\Omega\Gamma}=[\psi]_{\Omega\Gamma}$. The result is obtained simply applying classical logic to derive $\phi\equiv\psi\in\Gamma$. 
    
    Returning to the main claim, for the left-to-right direction take any classical tautology $\tau$. Since $\Gamma$ is a theory we have $\tau\in\Gamma$, therefore for all $\phi\in\Gamma,[\phi]_{\Omega\Gamma}=[\tau]_{\Omega\Gamma}$. For any formula $\alpha$, it is a classical tautology $\tau\equiv(\tau\lor\alpha)$, therefore $[\tau]_{\Omega\Gamma}=[\tau\lor\alpha]_{\Omega\Gamma}=[\tau]_{\Omega\Gamma}\lor[\alpha]_{\Omega\Gamma}$. Considering the order induced by $\lor$ over $\Fm_m/\Omega\Gamma$, this implies $[\alpha]_{\Omega\Gamma}\leq[\tau]_{\Omega\Gamma}=[\phi]_{\Omega\Gamma}$. This holds for every $\alpha$, therefore $[\phi]_{\Omega\Gamma}$ is the top element of $\Fm_m/\Omega\Gamma$. For the right-to-left direction suppose $[\phi]_{\Omega\Gamma}=1_\Gamma$. Again, $\Gamma$ contains every classical tautology $\tau$ and $[\tau]_{\Omega\Gamma}=1_\Gamma$, so $\phi\equiv\tau\in\Gamma$. Because $\Gamma$ is a theory we conclude $\phi\in\Gamma$.
\end{proof}

As a side result, completeness\footnote{Actually a stronger claim can be made, in fact $\Sfourglobal$ is algebraizable (\cite[p. 86]{Font16_handbook}).} follows immediately:

\begin{theorem}
    $\Sfourglobal$ is complete w.r.t. the class of interior algebras.
\end{theorem}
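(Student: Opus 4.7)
The plan is to run a standard Lindenbaum--Tarski argument built directly on Lemma \ref{lemma: S4_lind_tarski_process}, arguing by contraposition with respect to the global consequence over interior algebras. Soundness being routine, I focus on the non-trivial direction: if $\Gamma\nvdash_{\Sfourglobal}\phi$, I need to produce an interior algebra $\A$ and a homomorphism $v:\Fm_m\to\A$ such that $v(\gamma)=1^{\A}$ for every $\gamma\in\Gamma$ while $v(\phi)\neq 1^{\A}$.

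First I would replace $\Gamma$ by its $\Sfourglobal$-deductive closure $\overline{\Gamma}$, which is a theory and still does not contain $\phi$. Then I would form the Lindenbaum--Tarski algebra $\A:=\Fm_m/\Omega\overline{\Gamma}$ introduced above. The preceding discussion already establishes that $\Omega\overline{\Gamma}$ is a congruence on $\Fm_m$ and that $\A$ satisfies every Boolean equation (because classical tautologies of the form $\phi\equiv\psi$ are in every theory) and every modal equation (EqK1)--(Eq4) (because the corresponding $\equiv$-formulae are $\Sfourglobal$-theorems); hence $\A$ is an interior algebra. The canonical projection $v:\Fm_m\to\A$ sending $\chi\mapsto[\chi]_{\Omega\overline{\Gamma}}$ is then a homomorphism by construction.

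To conclude, I apply Lemma \ref{lemma: S4_lind_tarski_process} twice. For each $\gamma\in\Gamma\subseteq\overline{\Gamma}$, the lemma yields $v(\gamma)=[\gamma]_{\Omega\overline{\Gamma}}=1_{\overline{\Gamma}}$; for $\phi$, since $\phi\notin\overline{\Gamma}$, the same lemma gives $v(\phi)\neq 1_{\overline{\Gamma}}$. Thus the pair $(\A,v)$ refutes $\Gamma\vDash^g_{\mathrm{IA}}\phi$ and completeness follows.

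There is no real obstacle here: all the conceptual work has been absorbed by Lemma \ref{lemma: S4_lind_tarski_process} and by the observation that the Lindenbaum--Tarski quotient lives in the variety of interior algebras. The only point deserving attention is confirming the latter, namely that every equation in the chosen equational base of interior algebras translates into a theorem of $\Sfourglobal$ via the $\equiv$-connective; this is the routine verification the paper already alludes to when introducing the Lindenbaum--Tarski process for $\Sfourglobal$.
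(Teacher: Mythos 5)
Your proposal is correct and follows essentially the same route as the paper: pass to the deductive closure of $\Gamma$, form the Lindenbaum--Tarski quotient $\Fm_m/\Omega\overline{\Gamma}$, verify it is an interior algebra, and apply Lemma \ref{lemma: S4_lind_tarski_process} to separate $\Gamma$ from $\phi$ via the canonical projection. The only cosmetic difference is that you correctly note $\phi\notin\overline{\Gamma}$ follows directly from $\Gamma\nvdash_{\Sfourglobal}\phi$, where the paper invokes soundness for the same point.
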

\begin{proof}
    Soundness is proved by routine checking. For adequacy, by contraposition assume that $\Gamma\nvdash_{\Sfourglobal}\phi$. Extend $\Gamma$ to the least theory $\Delta\supseteq\Gamma$. By soundness, $\phi\notin\Delta$. Consider the interior algebra $\Fm_m/\Omega\Delta$ and the function $v(\phi)=[\phi]_{\Omega\Delta}$. By compatibility of $\Omega\Delta$ with the operations, $v$ is a valuation. By lemma \ref{lemma: S4_lind_tarski_process}, for all $\gamma\in\Delta, v(\gamma)=1_{\Delta}$, while $v(\phi)\neq 1_{\Delta}$.
\end{proof}

We have all the tools to prove the completeness of $\Paizero$. We will employ a canonical model strategy. First we need some definitions.

\begin{definition}\label{def.: consistet_and_prime}
    Let $\logic$ be a logic for a language $\tau$. A set of formulae $\Gamma\subseteq Fm_\tau$ is $\logic$-consistent if there exists $\phi\in Fm_\tau$ s.t. $\Gamma\nvdash_\logic\phi$. In particular, $\Gamma$ is $\logic$-$\phi$-consistent if $\Gamma\nvdash_\logic\phi$. For a language with a join operation, $\Gamma\neq Fm_\tau$ is prime if $\Gamma\vdash_{\logic}\phi\lor\psi$ implies $\Gamma\vdash_{\logic}\phi$ or $\Gamma\vdash_{\logic}\psi$. 
\end{definition}

\n When the context is clear, we might omit the prefix and simply write consistent or $\phi$-consistent. Observe that every prime theory is consistent.

In order to obtain a suitable result playing the role of Lindenbaum's lemma, we are going to perform a more abstract algebraic reasoning and conclude general lattice-theoretic results for closure operators. To the best of my knowledge, this strategy has been devised by \cite{Paoli_Prenosil_st}. As a corollary, the standard Lindenbaum's lemma for classical logic and some of its expansions is regained.

Let us recall some basic definitions from lattice theory. A lattice $\textbf{L}=\pair{L,\lor,\land}$ is \textit{complete} if meets and joins exist for arbitrary subsets of $L$. An element $a\in L$ of a complete lattice is called \textit{compact} if for any $S\subseteq L, a \leq \bigvee S$ implies that there exists finite $S'\subseteq S$ such that $a \leq \bigvee S'$. A complete lattice is \textit{algebraic} if every element can be expressed as the join of compact elements. In a complete lattice, an element $a\in \textbf{L}$ is \textit{completely meet-irreducible} when, for arbitrary $S\subseteq L$, from $a=\bigwedge S$ it follows $a\in S$. An element is simply \textit{meet-irreducible} if the previous property holds for finite $S$. The following result is a straightforward application of Zorn's lemma:

\begin{lemma}[\cite{Metcalf_Paoli_Tsinakis_residuated_book}, proposition 1.3.8]\label{lemma: algebraic_lattice_meet_irred_expansion}
    Let $\Le$ be an algebraic lattice, $a,c\in L$ with $c$ a compact element s.t. $c\nleq a$. There exists a completely meet-irreducible element $a^+\in L$ s.t. $a\leq a^+$ and $c\nleq a$.
\end{lemma}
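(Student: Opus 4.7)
The proof is a direct application of Zorn's lemma to the compactness hypothesis. The plan is to consider the set
\[
P = \{x \in L : a \leq x \text{ and } c \nleq x\},
\]
ordered by the induced lattice order, and extract the desired $a^+$ as a maximal element of $P$.

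First I would observe that $P$ is non-empty, since $a \in P$ by the assumption $c \nleq a$. Next, I would verify that every chain $C \subseteq P$ has an upper bound in $P$: since $\Le$ is complete, $b = \bigvee C$ exists and clearly $a \leq b$. The key point is that $c \nleq b$, and this is precisely where compactness of $c$ enters. Indeed, if $c \leq \bigvee C$, then by compactness there would be a finite $C' \subseteq C$ with $c \leq \bigvee C'$; but $C$ is a chain, so $\bigvee C' = x$ for some $x \in C'$, which contradicts $x \in P$. Hence $b \in P$ and Zorn's lemma produces a maximal element $a^+ \in P$, giving both $a \leq a^+$ and $c \nleq a^+$ (correcting what appears to be a typo in the stated conclusion).

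The remaining task is to show that $a^+$ is completely meet-irreducible. Suppose $a^+ = \bigwedge S$ for some $S \subseteq L$. If $a^+ \notin S$, then for each $s \in S$ we have $a^+ < s$ (since $a^+ \leq s$ trivially, and $a^+ = s$ would place $a^+$ in $S$). Maximality of $a^+$ in $P$ forces $c \leq s$ for every $s \in S$, because each such $s$ properly extends $a^+$ above $a$ and cannot lie in $P$. But then $c \leq \bigwedge S = a^+$, contradicting $a^+ \in P$. Therefore $a^+ \in S$, which is the definition of complete meet-irreducibility.

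The main obstacle is making sure compactness is used in the right place: without it, the join of a chain of elements avoiding $c$ could itself exceed $c$, and Zorn's lemma would fail to deliver an element of $P$. The complete meet-irreducibility step, by contrast, is formal and uses only maximality in $P$, not compactness directly.
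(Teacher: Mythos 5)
Your proof is correct and is exactly the ``straightforward application of Zorn's lemma'' that the paper alludes to when citing this result from Metcalfe--Paoli--Tsinakis (the paper itself gives no proof). You are also right that the conclusion as stated contains a typo and should read $c\nleq a^{+}$.
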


Since a logic $\logic$ is a closure relation $\vdash_\logic$ over formulae $Fm$ of a certain language, it can equally be seen as a \textit{closure operator} $\C^\logic:\mathscr{P}(Fm)\to Fm$ s.t. $\C_\logic(\Gamma)=\{ \phi$ $|$ $\Gamma\vdash_{\logic}\phi \}$. Each closure operator has associated a \textit{closure system} $\mathsf{Th}(\logic)\subseteq\mathscr{P}(Fm)$, that is a collection of subsets satisfying $Fm\in\mathsf{Th}(\logic)$ and closed under arbitrary non-empty intersections, defined as $\mathsf{Th}(\logic)=\{ \Gamma$ $|$ $\C_\logic(\Gamma)=\Gamma \}$, which forms the lattice of all $\logic$-theories. A logic $\logic$ is \textit{finitary} when $\Gamma\vdash_{\logic}\phi$ implies $\Gamma'\vdash_{\logic}\phi$ for finite $\Gamma'\subseteq\Gamma$. The closure operator $\C_\logic$ of a finitary logic is finitary as well, in the sense that $\C_\logic(\Gamma)=\bigcup\{ \C_\logic(\Gamma')$ $|$ finite $\Gamma'\subseteq\Gamma \}$. $\Gamma\in\mathsf{Th}(\logic)$ is \textit{finitely generated} if $\Gamma=\C_\logic(\Gamma')$ for finite $\Gamma'$.

\begin{lemma}[\cite{Font16_handbook}, propositions 1.65-1.67]\label{lemma: finitary_to_algebraic}
    If $\C$ is a finitary closure operator, its associated closure system $\mathscr{C}$ is an algebraic lattice. The compact elements of $\mathscr{C}$ are precisely the finitely generated ones.
\end{lemma}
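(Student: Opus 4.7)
The plan is to establish the algebraic lattice structure in three moves: the complete-lattice structure of $\mathscr{C}$, compactness of finitely generated closed sets, and a representation of every closed set as a join of such. The second clause of the lemma will then drop out of this representation together with the definition of compactness.

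First I would observe that $\mathscr{C}$ is a complete lattice: it is closed under arbitrary intersections by the definition of a closure system, which gives meets $\bigcap_{i\in I}\Gamma_i$, and the join of a family $\{\Gamma_i\}_{i\in I}\subseteq\mathscr{C}$ is then forced to be $\C(\bigcup_{i\in I}\Gamma_i)$, the smallest closed set containing each $\Gamma_i$. This step uses nothing beyond the definitions of closure operator and closure system.

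The crucial move, and the only one that actually invokes finitarity, is to show that $\C(X)$ is a compact element whenever $X$ is finite. Assume $\C(X)\leq\bigvee_{i\in I}\Gamma_i=\C(\bigcup_{i\in I}\Gamma_i)$; then every $x\in X$ belongs to $\C(\bigcup_{i\in I}\Gamma_i)$, so by finitarity of $\C$ there is a finite $Y_x\subseteq\bigcup_{i\in I}\Gamma_i$ with $x\in\C(Y_x)$. Each $Y_x$, being finite, is contained in $\bigcup_{i\in I_x}\Gamma_i$ for some finite $I_x\subseteq I$, and setting $I'=\bigcup_{x\in X}I_x$, which is finite because $X$ is, I obtain $X\subseteq\C(\bigcup_{i\in I'}\Gamma_i)$ and hence $\C(X)\leq\bigvee_{i\in I'}\Gamma_i$. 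The only mild obstacle is the bookkeeping between two levels of finitarity: that of $\C$ applied to individual elements of $X$, and that of joins in the lattice $\mathscr{C}$.

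To conclude, I would note that every $\Gamma\in\mathscr{C}$ admits the representation $\Gamma=\C(\Gamma)=\bigvee\{\C(X):X\subseteq\Gamma\text{ finite}\}$, since $\Gamma$ equals the union of its finite subsets while the reverse inclusion is immediate from $\C(X)\subseteq\C(\Gamma)=\Gamma$; this displays $\Gamma$ as a join of compact elements and so proves $\mathscr{C}$ is algebraic. For the second clause, if $a\in\mathscr{C}$ is compact the same representation together with compactness yields $a=\C(X_1)\vee\cdots\vee\C(X_n)=\C(X_1\cup\cdots\cup X_n)$ for finitely many finite subsets $X_j\subseteq a$, so $a$ is finitely generated; the converse direction was already handled in the compactness step.
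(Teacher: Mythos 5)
The paper does not prove this lemma at all --- it is imported verbatim from Font's handbook (propositions 1.65--1.67) as a known result, so there is no in-paper argument to compare against. Your proof is the standard textbook argument and is correct: the complete-lattice structure from closure under intersections, the finitarity-based compactness of $\C(X)$ for finite $X$ (the only place finitarity is used, and you handle the two levels of finiteness correctly), the representation $\Gamma=\bigvee\{\C(X) : X\subseteq\Gamma \text{ finite}\}$, and the extraction of finite generation from compactness via $\C(X_1)\vee\dots\vee\C(X_n)=\C(X_1\cup\dots\cup X_n)$ all check out against the paper's definitions of finitary closure operator and closure system.
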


\n All the logics considered throughout this paper have finite Hilbert-style presentations, therefore they are finitary, hence their lattices of theories are algebraic. The following results apply to all of our logics as well.

\begin{lemma}\label{lemma: meet_irred_to_prime_theory}
    Let the logic $\logic$ be finitary and a conservative expansion of classical logic. If $\Gamma$ is a completely meet-irreducible theory then it is a prime theory.
\end{lemma}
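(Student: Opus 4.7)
The plan is to argue by contraposition: assume $\Gamma$ fails to be prime and produce two theories strictly above $\Gamma$ whose intersection is $\Gamma$ itself, contradicting complete meet-irreducibility in the lattice $\mathsf{Th}(\logic)$.

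First I would dispatch the side condition $\Gamma \neq Fm_\tau$: under the standard convention $\bigcap \emptyset = Fm_\tau$, the top element of $\mathsf{Th}(\logic)$ cannot be completely meet-irreducible, for it would have to belong to the empty family. Then, for the disjunction clause, suppose $\Gamma \vdash_\logic \phi \lor \psi$ but $\phi, \psi \notin \Gamma$. Set $\Delta_1 := \C_\logic(\Gamma \cup \{\phi\})$ and $\Delta_2 := \C_\logic(\Gamma \cup \{\psi\})$. These are theories, and both strictly extend $\Gamma$ since $\phi \in \Delta_1 \setminus \Gamma$ and $\psi \in \Delta_2 \setminus \Gamma$. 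The key identity to establish is $\Delta_1 \cap \Delta_2 = \Gamma$. The inclusion $\Gamma \subseteq \Delta_1 \cap \Delta_2$ is trivial from monotonicity of $\C_\logic$; for the converse, take $\chi \in \Delta_1 \cap \Delta_2$, so that $\Gamma, \phi \vdash_\logic \chi$ and $\Gamma, \psi \vdash_\logic \chi$, and invoke classical proof by cases to obtain $\Gamma, \phi \lor \psi \vdash_\logic \chi$; combined with $\Gamma \vdash_\logic \phi \lor \psi$ this yields $\chi \in \Gamma$. Thus $\Gamma$ is the meet of two theories properly containing it, none of which equals $\Gamma$, contradicting complete meet-irreducibility.

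The only delicate point, and the expected main obstacle, is the invocation of disjunction elimination, which is not automatic in an arbitrary expansion. It must be extracted from the hypothesis that $\logic$ is a conservative expansion of classical logic: since the tautology $(\phi \supset \chi) \land (\psi \supset \chi) \supset (\phi \lor \psi \supset \chi)$ belongs to $\logic$, and modus ponens together with the classical deduction theorem for $\supset$ are available, the rule $\Gamma, \phi \vdash \chi$ and $\Gamma, \psi \vdash \chi \Rightarrow \Gamma, \phi \lor \psi \vdash \chi$ is derivable in $\logic$. Note that the finitariness hypothesis does not appear to be used in this particular argument; it is presumably listed because in applications one combines this lemma with Lemmas~\ref{lemma: algebraic_lattice_meet_irred_expansion} and~\ref{lemma: finitary_to_algebraic} to extract completely meet-irreducible theories from the algebraic lattice $\mathsf{Th}(\logic)$.
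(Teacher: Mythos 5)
Your proof is correct and takes essentially the same route as the paper's: both rest on the identity $\C_{\logic}(\Gamma\cup\{\phi\lor\psi\})=\C_{\logic}(\Gamma\cup\{\phi\})\cap\C_{\logic}(\Gamma\cup\{\psi\})$, obtained from classical disjunction elimination, followed by an appeal to meet-irreducibility of $\Gamma$ in $\mathsf{Th}(\logic)$ --- you merely phrase the argument contrapositively. Your two side remarks are also accurate and slightly more careful than the paper's own proof: the exclusion of the top theory $Fm_\tau$ (required by the definition of primality, and handled by noting it is the meet of the empty family) is not addressed explicitly in the paper, and finitarity is indeed not used in this lemma but only in its combination with the surrounding lemmas.
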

\begin{proof}
    First observe that the following hold: $\Gamma,\phi\lor\psi\vdash\chi\Leftrightarrow\Gamma,\phi\vdash\chi$ and $\Gamma,\psi\vdash\chi$. For the left-to-right direction, assume that $\Gamma,\phi\lor\psi\vdash\chi$, then by classical logic $\Gamma,\phi\vdash\phi\lor\psi$, hence applying the hypothesis $\Gamma,\phi\vdash\chi$. The same can be repeated substituting $\psi$ for $\phi$, obtaining $\Gamma,\psi\vdash\chi$. For the right-to-left direction, suppose $\Gamma,\phi\vdash\chi$ and $\Gamma,\psi\vdash\chi$. Since we can use the entirety of classical logic, we can borrow the rule of $\lor$-elimination from natural deduction and immediately obtain $\Gamma,\phi\lor\psi\vdash\chi$.
    

    \n Returning to the main lemma, let $\Gamma$ be a completely meet-irreducible theory in $\mathsf{Th}(\logic)$ and suppose $\Gamma\vdash\phi\lor\psi$. Since $\Gamma$ is a theory, $\Gamma=\C_{\logic}(\Gamma,\phi\lor\psi)$, moreover by the above point also $\C_{\logic}(\Gamma,\phi\lor\psi)=\C_{\logic}(\Gamma,\phi)\cap\C_{\logic}(\Gamma,\psi)$. Since $\Gamma$ is meet-irreducible, we conclude $\Gamma=\C_{\logic}(\Gamma,\phi)$ or $\Gamma=\C_{\logic}(\Gamma,\psi)$, that is $\Gamma\vdash\phi$ or $\Gamma\vdash\psi$.
\end{proof}

Notice that in the lattice of theories of a logic, $\Gamma\nvdash\phi$ amounts to $\C_{\logic}(\phi)\nsubseteq\C_{\logic}(\Gamma)$. The next theorem will play the role of Lindenbaum's lemma:

\begin{theorem}\label{th.: lindenbaum_like_lemma}
    Let the logic $\logic$ be finitary and a conservative expansion of classical logic. If $\Gamma$ is a theory s.t. $\Gamma\nvdash_\logic\phi$, there exists a prime theory $\Delta\supseteq\Gamma$ s.t. $\Delta\nvdash_\logic\phi$.
\end{theorem}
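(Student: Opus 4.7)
The plan is to assemble the three lattice-theoretic lemmas that have just been proved, noting that by assumption $\logic$ is finitary and a conservative expansion of classical logic, so all their hypotheses are in force. Viewing $\logic$ as a closure operator $\C_\logic$ on $\mathscr{P}(Fm)$, the hypothesis $\Gamma \nvdash_\logic \phi$ rephrases as $\C_\logic(\phi) \nsubseteq \C_\logic(\Gamma)$ in the lattice $\mathsf{Th}(\logic)$.

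First I would invoke Lemma \ref{lemma: finitary_to_algebraic}: because $\logic$ is finitary, $\mathsf{Th}(\logic)$ is an algebraic lattice whose compact elements are exactly the finitely generated theories. In particular $\C_\logic(\phi)$, being generated by the single formula $\phi$, is compact. Next, I would apply Lemma \ref{lemma: algebraic_lattice_meet_irred_expansion} with the algebraic lattice $\mathsf{Th}(\logic)$, taking $a := \C_\logic(\Gamma)$ and $c := \C_\logic(\phi)$; the hypothesis $c \nleq a$ is precisely $\Gamma \nvdash_\logic \phi$, so the lemma yields a completely meet-irreducible theory $\Delta \in \mathsf{Th}(\logic)$ with $\C_\logic(\Gamma) \subseteq \Delta$ and $\C_\logic(\phi) \nsubseteq \Delta$. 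Translating back, $\Gamma \subseteq \Delta$ (since $\Gamma \subseteq \C_\logic(\Gamma)$) and $\phi \notin \Delta$, i.e. $\Delta \nvdash_\logic \phi$.

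Finally, I would apply Lemma \ref{lemma: meet_irred_to_prime_theory} to $\Delta$: since $\logic$ is a finitary conservative expansion of classical logic and $\Delta$ is completely meet-irreducible, $\Delta$ is prime. This gives the desired prime theory extending $\Gamma$ and avoiding $\phi$.

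There is essentially no obstacle here beyond bookkeeping, since the real work has already been done in the preceding lemmas: algebraicity of the theory lattice delivers a Zorn-style maximal element via Lemma \ref{lemma: algebraic_lattice_meet_irred_expansion}, and the classical $\lor$-elimination argument encapsulated in Lemma \ref{lemma: meet_irred_to_prime_theory} upgrades meet-irreducibility to primality. The only point where care is needed is to state the translation between closure-operator language ($\C_\logic(\phi) \nsubseteq \C_\logic(\Gamma)$, compactness of singly generated theories) and derivability language ($\Gamma \nvdash \phi$, finitariness of Hilbert-style presentations), so that the reader sees why $\logic$ being a finitary conservative expansion of classical logic is exactly what is required to invoke each of the three preceding results.
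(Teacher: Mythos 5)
Your proof is correct and follows exactly the same route as the paper's: it chains Lemma \ref{lemma: finitary_to_algebraic} (algebraicity and compactness of $\C_\logic(\phi)$), Lemma \ref{lemma: algebraic_lattice_meet_irred_expansion} (extension to a completely meet-irreducible theory avoiding $\phi$), and Lemma \ref{lemma: meet_irred_to_prime_theory} (meet-irreducible implies prime). No gaps; the translation between derivability and closure-operator language is handled just as in the paper.
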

\begin{proof}
    Since $\logic$ is finitary, its lattice of theories $\mathsf{Th}(\logic)$ is algebraic, as stated in lemma \ref{lemma: finitary_to_algebraic}, moreover $\C_{\logic}(\phi)$ is compact since it is finitely generated. $\Gamma$ is a theory, which means $\C_{\logic}(\Gamma)=\Gamma$, and $\Gamma\nvdash\phi$ is equivalent to $\C_{\logic}(\phi)\nsubseteq\Gamma$. Applying lemma \ref{lemma: algebraic_lattice_meet_irred_expansion}, there exists a completely meet-irreducible theory $\Delta\supseteq\Gamma$ s.t. $\C_{\logic}(\phi)\nsubseteq\Delta$, that is $\Delta\nvdash\phi$. By lemma \ref{lemma: meet_irred_to_prime_theory}, $\Delta$ is a prime theory.
\end{proof}

\begin{definition}\label{def.: pai0_canonical_model}
    Let $\Gamma$ be a prime $\Paizero$-theory. The canonical model for $\Gamma$ is the $\ModelPaizero$-model $\M^\Gamma=\langle\Fm_m/\Omega v_1[\Gamma],\langle N[Fm]/$$\sim_\Gamma,\oplus,\gru\rangle, N,v^\Gamma,\pi\rangle$, with:
    \begin{itemize}
        \item $v^\Gamma=v_2\circ v_1$, with $v_1:Fm\to Fm_m$ the identity function w.r.t. to variables, a homomorphism w.r.t. $\neg,\lor,\Box$ and:

        $$v_1(\phi\to\psi)=\begin{cases} 
	\Box v_1(\neg\phi\lor\psi) & \text{if } [ N(\psi)]_{\sim_\Gamma}\leq_\oplus[ N(\phi)]_{\sim_\Gamma};\\
        v_1(\neg\phi\lor\psi)\land\neg v_1(\neg\phi\lor\psi) & \text{otherwise.}
        \end{cases}$$


        $v_2:Fm_m\to Fm_m/\Omega v_1[\Gamma]$ is defined as $v_2(\phi)=[\phi]_{\Omega v_1[\Gamma]}$.
        
        \item for all $\phi\in N(Fm),\pi(\phi)=[\phi]_{\sim_\Gamma}$.
    \end{itemize}
\end{definition}

\n Observe that $v^\Gamma$ obeys the conditions for a valuation of a $\ModelPaizero$-model, in fact if $[N(\psi)]_{\sim_\Gamma}\nleq_\oplus[ N(\phi)]_{\sim_\Gamma}$, $v^\Gamma(\phi\to\psi)=[v_1(\neg\phi\lor\psi)]_{\Omega v_1[\Gamma]}\land\neg[v_1(\neg\phi\lor\psi)]_{\Omega v_1[\Gamma]}=0_\Gamma$, with $0_\Gamma$ the bottom element of the algebra, because $\Fm_m/\Omega v_1[\Gamma]$ is also a Boolean algebra.

\begin{lemma}\label{lemma: pai0_prime_to_S4_prime}
    Let $\Gamma$ be a prime $\Paizero$-theory and $\M^\Gamma$ its canonical model. Then $v_1[\Gamma]$ is a prime $\Sfourglobal$-theory.
\end{lemma}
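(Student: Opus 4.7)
The plan is to show that $v_1[\Gamma] = \Gamma \cap Fm_m$; this identification immediately yields both theoryhood and primeness of $v_1[\Gamma]$ in $\Sfourglobal$. For theoryhood, since $\Paizero$ contains all the axioms and rules of $\Sfourglobal$, any $\Sfourglobal$-consequence of $\Gamma \cap Fm_m$ is also a $\Paizero$-consequence of $\Gamma$, hence lies in $\Gamma$, and being modal it lies in $\Gamma \cap Fm_m$. For primeness, if $\Gamma \cap Fm_m \vdash_{\Sfourglobal} \alpha \lor \beta$ with $\alpha,\beta \in Fm_m$, the disjunction lies in $\Gamma$, and primeness of $\Gamma$ places one of the disjuncts in $\Gamma \cap Fm_m$.

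The work thus reduces to the key inductive claim: for every $\phi \in Fm$, $\Gamma \vdash_{\Paizero} \phi \equiv v_1(\phi)$. Given this, $v_1[\Gamma] \subseteq \Gamma$ (so $v_1[\Gamma] \subseteq \Gamma \cap Fm_m$, since $v_1(\phi) \in Fm_m$ by construction), while conversely any $\alpha \in \Gamma \cap Fm_m$ contains no occurrence of $\to$, so $v_1(\alpha) = \alpha \in v_1[\Gamma]$. The induction itself is routine except in the $\to$-step: the base case and the Boolean steps are immediate since $v_1$ is a homomorphism on $\neg,\lor$; the $\Box$-case applies (Nec$_g$) and (K) to lift $\Gamma \vdash \psi \equiv v_1(\psi)$ to $\Gamma \vdash \Box\psi \equiv \Box v_1(\psi) = v_1(\Box\psi)$.

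For $\phi = \psi \to \chi$ we split according to the two clauses of $v_1$. In the first clause, $[N(\chi)]_{\sim_\Gamma} \leq_\oplus [N(\psi)]_{\sim_\Gamma}$, so corollary \ref{cor.: pai0_order_correspondence} delivers $\chi \prec \psi \in \Gamma$; axiom (A4) then gives $\Gamma \vdash (\psi \to \chi) \equiv \Box(\psi \supset \chi)$, and chaining through the $\Box$-step applied to the inductive hypothesis on $\neg\psi \lor \chi$ reaches $v_1(\psi \to \chi) = \Box v_1(\neg\psi \lor \chi)$. In the \emph{otherwise} clause, $\chi \prec \psi \notin \Gamma$, and here primeness is essential: applied to the classical tautology $(\chi \prec \psi) \lor \neg(\chi \prec \psi)$ it promotes the mere absence of $\chi \prec \psi$ to the presence of $\neg(\chi \prec \psi)$ in $\Gamma$. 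Axiom (A4) then yields $\Gamma \vdash \neg(\psi \to \chi)$, while $v_1(\psi \to \chi)$ is by construction the explicit contradiction $v_1(\neg\psi \lor \chi) \land \neg v_1(\neg\psi \lor \chi)$, so $\Gamma \vdash \neg v_1(\psi \to \chi)$ too; two $\Gamma$-refuted formulae are $\Gamma$-equivalent by classical reasoning. This \emph{otherwise} subcase is the main obstacle: without primeness there is no route from $\chi \prec \psi \notin \Gamma$ to $\neg(\chi \prec \psi) \in \Gamma$, and hence no way to force $\psi \to \chi$ and the contradictory $v_1(\psi \to \chi)$ into $\Gamma$-equivalence.
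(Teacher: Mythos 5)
Your proof is correct, but it is organized quite differently from the paper's. The paper verifies the required properties of $v_1[\Gamma]$ directly: it checks that $v_1[\Gamma]$ contains the $\Sfourglobal$-axioms and is closed under (MP) and (Nec$_g$) by tracing preimages under $v_1$ back into $\Gamma$, argues consistency by showing no $p\land\neg p$ can arise (ruling out separately, via (A4) and corollary \ref{cor.: pai0_order_correspondence}, that the contradictory \emph{otherwise} clause of $v_1$ ever fires for an implication actually belonging to $\Gamma$), and derives primality from that of $\Gamma$ again by a preimage argument. You instead establish the provable equivalence $\Gamma\vdash_{\Paizero}\phi\equiv v_1(\phi)$ for every $\phi$, deduce the identification $v_1[\Gamma]=\Gamma\cap Fm_m$, and transfer theoryhood and primeness wholesale from $\Gamma$; primeness of $\Gamma$ enters at the analogous delicate spot, namely to promote $\chi\prec\psi\notin\Gamma$ to $\neg(\chi\prec\psi)\in\Gamma$ so that both $\psi\to\chi$ and the explicitly contradictory $v_1(\psi\to\chi)$ are $\Gamma$-refuted, hence $\Gamma$-equivalent. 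Your route costs one induction up front but buys a cleaner argument: the paper's preimage-tracing steps (``$v_1(\phi)\in v_1[\Gamma]$, hence $\phi\in\Gamma$'') are not literally immediate since $v_1$ is not injective, whereas your identification makes them unnecessary; moreover your equivalence claim essentially subsumes the paper's subsequent lemma \ref{lemma: pai0_bridge_for_canonical_valuation}, since $\phi\in\Gamma\Leftrightarrow v_1(\phi)\in\Gamma\cap Fm_m=v_1[\Gamma]$ falls out of it directly. The only points left tacit in your write-up are minor: that $\Gamma\cap Fm_m$ is a proper subset of $Fm_m$ (needed for primeness as defined in definition \ref{def.: consistet_and_prime}, and immediate from consistency of $\Gamma$), and that a $\Sfourglobal$-consequence of a set of modal formulae is itself a modal formula.
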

\begin{proof}
    That $v_1[\Gamma]$ is a $\Sfourglobal$-theory follows from (A1-3), (K), (T), (4) all belongs to $\Gamma$ and that it is closed under (MP) and (Nec$_g$). For closure under (MP), let $v_1(\phi), v_1(\phi\supset\psi)\in v_1[\Gamma]$, then $\phi,\phi\supset\psi\in\Gamma$. It follows $\psi\in\Gamma$, so $v_1(\psi)\in v_1[\Gamma]$. For (Nec$_g$), let $v_1(\phi)$ so $\phi\in\Gamma$, hence $\Box\phi\in\Gamma$ and $v_1(\Box\phi) = \Box v_1(\psi)\in v_1[\Gamma]$.
    
    The consistency of $v_1[\Gamma]$ follows from that of $\Gamma$. In fact if $v_1[\Gamma]$ were inconsistent then, by classical logic, $p\land\neg p\in v_1[\Gamma]$ for every variable $p$. Since $v_1[\Gamma]$ is a $\Sfourglobal$-theory, $p\in v_1[\Gamma]$ and $\neg p\in v_1[\Gamma]$. By definition \ref{def.: pai0_canonical_model}, $v_1$  is the identity w.r.t. to variables, therefore the only possibility for the preimage of $p$ is that $p=v_1(p)$, therefore $p\in v_1[\Gamma]$ implies $p\in\Gamma$, moreover $\neg p=\neg v_1(p)=v_1(\neg p)\in v_1[\Gamma]$ implies $\neg p\in\Gamma$, which contradicts the consistency of $\Gamma$. Observe that it cannot be the case that $v_1(\neg\phi'\lor\psi')\land\neg v_1(\neg\phi'\lor\psi')\in v_1[\Gamma]$ for some $\phi'\to\psi'\in\Gamma$ s.t. $[ N(\psi')]_{\sim_\Gamma}\nleq_\oplus[ N(\phi')]_{\sim_\Gamma}$. In fact, by corollary \ref{cor.: pai0_order_correspondence}, this would imply $\psi'\prec\phi'\notin\Gamma$, which by (A4) forces $\phi'\to\psi'\notin\Gamma$.

    For primality, suppose $\phi\lor\psi\in v_1[\Gamma]$. Among the possible preimages for $\phi\lor\psi$ through $v_1$ there is itself, and since $\phi\lor\psi\in Fm_m$ we have, by definition of $v_1$, $v_1(\phi\lor\psi)=v_1(\phi)\lor v_1(\psi)=\phi\lor\psi$, so $v_1(\phi)=\phi$ and $v_1(\psi)=\psi$. $\Gamma$ is prime, so from $\phi\lor\psi\in\Gamma$ follow either $\phi\in\Gamma$ or $\psi\in\Gamma$, therefore $v_1(\phi)=\phi\in v_1[\Gamma]$ or $v_1(\psi)=\psi\in v_1[\Gamma]$.
\end{proof}

\n Since $v_1[\Gamma]$ is a $\Sfourglobal$-theory, we can obtain the Lindenbaum-Tarski algebra $\Fm_m/\Omega v_1[\Gamma]$, that is an interior algebra. 

\begin{lemma}\label{lemma: pai0_bridge_for_canonical_valuation}
    Let $\Gamma$ be a prime $\Paizero$-theory and $\M^\Gamma$ its canonical model. Then for all $\phi\in Fm$, $\phi\in\Gamma\Leftrightarrow v_1(\phi)\in v_1[\Gamma]$.
\end{lemma}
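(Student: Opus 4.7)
The $(\Rightarrow)$ direction is immediate from the definition of $v_1[\Gamma]$. For the substantive $(\Leftarrow)$ direction, my plan is to reduce the lemma to the auxiliary claim that every object-language formula is $\Gamma$-equivalent to its translation, i.e.\ $\phi \equiv v_1(\phi) \in \Gamma$ for every $\phi \in Fm$. Granting this, if $v_1(\phi) \in v_1[\Gamma]$ we can pick $\chi \in \Gamma$ with $v_1(\chi) = v_1(\phi)$, and chaining the claim applied to $\phi$ and to $\chi$ with the literal identity of their images yields $\phi \equiv \chi \in \Gamma$, hence $\phi \in \Gamma$ since $\Gamma$ is a theory.

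I would prove the claim by induction on the complexity of $\phi$. Variables are immediate since $v_1$ fixes them, and the Boolean cases $\neg\psi$ and $\psi_1 \lor \psi_2$ follow from the inductive hypothesis and classical logic because $v_1$ commutes with those connectives. For $\Box\psi$ the inductive hypothesis delivers $\psi \equiv v_1(\psi) \in \Gamma$; applying (Nec$_g$) to each implication in this biconditional and then (K) to strip $\Box$ across the conditional gives $\Box\psi \equiv \Box v_1(\psi) = v_1(\Box\psi) \in \Gamma$.

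The pivotal case is $\phi = \psi \to \chi$, and I would split on whether the content condition $[N(\chi)]_{\sim_\Gamma} \leq_\oplus [N(\psi)]_{\sim_\Gamma}$ holds. If it does, corollary \ref{cor.: pai0_order_correspondence} guarantees $\chi \prec \psi \in \Gamma$, and combined with (A4) this collapses $\psi \to \chi$ to $\Box(\psi \supset \chi)$ modulo $\Gamma$. Applying the inductive hypothesis for $\psi$ and $\chi$ together with the (Nec$_g$)+(K) pattern from the previous case, this becomes $\Gamma$-equivalent to $\Box(\neg v_1(\psi) \lor v_1(\chi))$, which by unfolding the definition of $v_1$ is precisely $v_1(\psi \to \chi)$.

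If instead the content condition fails, corollary \ref{cor.: pai0_order_correspondence} yields $\chi \prec \psi \notin \Gamma$, and (A4) forces $\psi \to \chi \notin \Gamma$. Here the key move is to observe that a prime $\Paizero$-theory, being an extension of classical logic, is automatically maximally consistent: applying primality to the classical tautology $(\psi \to \chi) \lor \neg(\psi \to \chi) \in \Gamma$ yields $\neg(\psi \to \chi) \in \Gamma$. On the other hand, $v_1(\psi \to \chi)$ unfolds to the literal classical contradiction $v_1(\neg\psi \lor \chi) \land \neg v_1(\neg\psi \lor \chi)$, so its negation is already a classical tautology in $\Gamma$. Classical logic then delivers $(\psi \to \chi) \equiv v_1(\psi \to \chi) \in \Gamma$ from the fact that both sides are refuted by $\Gamma$. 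This failing-content subcase, where one must pass through the maximality provided by primality, is what I expect to be the main obstacle.
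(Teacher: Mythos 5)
Your proof is correct, but it runs along a genuinely different axis from the paper's. The paper proves the right-to-left transfer directly by induction on $\phi$, working with $v_1[\Gamma]$ as a prime, consistent $\Sfourglobal$-theory (lemma \ref{lemma: pai0_prime_to_S4_prime}): in the disjunction case it uses primality of $v_1[\Gamma]$, in the negation case consistency of $v_1[\Gamma]$ plus primality of $\Gamma$, in the box case (T) downward and (Nec$_g$) upward, and in the arrow case it discards the failing-content branch outright because $v_1(\neg\alpha\lor\beta)\land\neg v_1(\neg\alpha\lor\beta)$ cannot live in a consistent theory. You instead prove the stronger internal statement that $\phi\equiv v_1(\phi)\in\Gamma$ for every $\phi$ (legitimate, since $Fm_m\subseteq Fm$) and then derive the lemma by chaining equivalences through an arbitrary $\chi\in\Gamma$ with $v_1(\chi)=v_1(\phi)$. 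What your route buys: it needs only properties of $\Gamma$ itself (primality qua completeness, closure under (Nec$_g$), corollary \ref{cor.: pai0_order_correspondence} in both directions), it sidesteps entirely the paper's somewhat informal reasoning about which formulae can be preimages of a given element of $v_1[\Gamma]$, and it handles the failing-content arrow case constructively (both sides refuted, hence equivalent) rather than by contradiction. What the paper's route buys: the induction hypotheses are weaker membership claims rather than provable biconditionals, and the failing-content case is dispatched in one line from consistency. Both arguments rely on the same essential resources --- primality, (A4), (T)/(Nec$_g$)/(K), and the order correspondence --- so neither is circular, and your reduction step (transitivity of $\equiv$ across the literal identity $v_1(\chi)=v_1(\phi)$, then modus ponens) is sound.
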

\begin{proof}
    Recall that, by lemma \ref{lemma: pai0_prime_to_S4_prime}, $v_1[\Gamma]$ is prime $\Sfourglobal$-theory. We only consider the right-to-left direction, the opposite is trivial. We proceed by induction on the complexity of $\phi$. For the base case, if $\phi:=p$ for $p\in Var$, by definition $v_1$ is the identity over variables, therefore $v_1(p)=p\in v_1[\Gamma]$. Hence the only preimage of $p$ is $p$ itself, that is $p\in\Gamma$.

    Assume that the property holds for formulae simpler than $\phi$. If $\phi:=\alpha\lor\beta$ and $v_1(\alpha\lor\beta)=v_1(\alpha)\lor v_1(\beta)\in v_1[\Gamma]$. By primality, $v_1(\alpha)\in v_1[\Gamma]$ or $v_1(\beta)\in v_1[\Gamma]$. By induction hypothesis $\alpha\in\Gamma$ or $\beta\in\Gamma$, hence by disjunction introduction $\alpha\lor\beta\in\Gamma$.
    
    Suppose $\phi:=\neg\alpha$ and $v_1(\neg\alpha)=\neg v_1(\alpha)\in v_1[\Gamma]$. By consistency, $v_1(\alpha)\notin v_1[\Gamma]$. By induction hypothesis $\alpha\notin\Gamma$. $\Gamma$ contains all classical tautologies, hence $\alpha\lor\neg\alpha\in\Gamma$, and by primality either $\alpha\in\Gamma$ or $\neg\alpha\in\Gamma$, then $\neg\alpha\in\Gamma$.

    Let $\phi:=\Box\alpha$ and $v_1(\Box\alpha)=\Box v_1(\alpha)\in v_1[\Gamma]$. $v_1[\Gamma]$ is a $\Sfourglobal$-theory, hence by (T) $v_1(\alpha)\in v_1[\Gamma]$. By induction hypothesis $\alpha\in\Gamma$, and by (Nec$_g$) $\Box\alpha\in\Gamma$.

    Suppose $\phi:=\alpha\to\beta$ and $v_1(\alpha\to\beta)\in v_1[\Gamma]$. We have to consider two cases. (1) If in the canonical model it holds $[ N(\psi)]_{\sim_\Gamma}\leq_\oplus[ N(\phi)]_{\sim_\Gamma}$, then $v_1(\alpha\to\beta)=\Box v_1(\neg\alpha\lor\beta)\in v_1[\Gamma]$. By (T) $v_1(\neg\alpha\lor\beta)\in v_1[\Gamma]$. We can use the previous cases for disjunction and negation to apply the induction hypothesis and obtain $\neg\alpha\lor\beta\in\Gamma$, then by (Nec$_g$) $\Box(\neg\alpha\lor\beta)\in\Gamma$. By lemma \ref{cor.: pai0_order_correspondence}, from $[ N(\psi)]_{\sim_\Gamma}\leq_\oplus[ N(\phi)]_{\sim_\Gamma}$ we have $\psi\prec\phi\in\Gamma$. By (A4) $\phi\to\psi\in\Gamma$. (2) If in the canonical model it holds $[ N(\psi)]_{\sim_\Gamma}\nleq_\oplus[ N(\phi)]_{\sim_\Gamma}$, then $v_1(\neg\phi\lor\psi)\land\neg v_1(\neg\phi\lor\psi)\in v_1[\Gamma]$. This cannot happen, since $v_1[\Gamma]$ is consistent by lemma \ref{lemma: pai0_prime_to_S4_prime}. Therefore the second case is discarded.
\end{proof}

\begin{lemma}\label{lemma: pai0_canonical_valuation}
    Let $\Gamma$ be a prime $\Paizero$-theory and $\M^\Gamma$ its canonical model. For all $\phi\in Fm$, $v^\Gamma(\phi)=1_{v_1[\Gamma]}\Leftrightarrow\phi\in\Gamma$.
\end{lemma}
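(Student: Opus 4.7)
The plan is to read the lemma as a two-step translation and then chain together the bridge lemmas already established. By the definition in \ref{def.: pai0_canonical_model}, $v^\Gamma(\phi) = v_2(v_1(\phi)) = [v_1(\phi)]_{\Omega v_1[\Gamma]}$, so the statement $v^\Gamma(\phi) = 1_{v_1[\Gamma]}$ is literally the statement that the equivalence class of $v_1(\phi)$ in the Lindenbaum--Tarski algebra $\Fm_m/\Omega v_1[\Gamma]$ is the top element. This immediately points to Lemma \ref{lemma: S4_lind_tarski_process}, which was proved exactly for this purpose in the $\Sfourglobal$-setting. To apply it, the only thing I need is that $v_1[\Gamma]$ actually is an $\Sfourglobal$-theory; that is precisely the content of Lemma \ref{lemma: pai0_prime_to_S4_prime}, so no further work is required here.

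First I would spell out the equivalence $v^\Gamma(\phi) = 1_{v_1[\Gamma]} \Leftrightarrow [v_1(\phi)]_{\Omega v_1[\Gamma]} = 1_{v_1[\Gamma]}$ from the definition of $v^\Gamma$. Then, invoking Lemma \ref{lemma: pai0_prime_to_S4_prime}, I note that $v_1[\Gamma]$ is a (prime) $\Sfourglobal$-theory, so Lemma \ref{lemma: S4_lind_tarski_process} applies to it, giving $[v_1(\phi)]_{\Omega v_1[\Gamma]} = 1_{v_1[\Gamma]} \Leftrightarrow v_1(\phi) \in v_1[\Gamma]$. Finally, Lemma \ref{lemma: pai0_bridge_for_canonical_valuation} converts the right-hand side into $\phi \in \Gamma$, closing the chain of equivalences.

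The possible source of worry is the asymmetric role of $v_1$: the map is the identity on propositional variables but collapses $\to$ either to $\Box(\neg\phi\lor\psi)$ or to an explicit contradiction, depending on the order relation in the content algebra. One might fear that applying Lemma \ref{lemma: S4_lind_tarski_process}, which is stated for formulae in the modal language $\Fm_m$, could be illegitimate for formulae coming from $v_1$. But $v_1(\phi)$ already lives in $\Fm_m$ by construction, so the application is direct; and the bridge lemma \ref{lemma: pai0_bridge_for_canonical_valuation} has already handled the delicate case analysis on $\to$, in particular ruling out (via consistency of $v_1[\Gamma]$) the pathological contradictory branch. So no genuine obstacle remains; the proof should amount to two or three displayed equivalences plus a citation of each of the three supporting lemmas.
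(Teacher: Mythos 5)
Your proof is correct and follows essentially the same route as the paper's: unfold $v^\Gamma(\phi)=[v_1(\phi)]_{\Omega v_1[\Gamma]}$, characterize the top element of the Lindenbaum--Tarski algebra, and finish with Lemma \ref{lemma: pai0_bridge_for_canonical_valuation}. The only cosmetic difference is that the paper re-derives the middle equivalence inline via an arbitrary classical tautology $\tau$, whereas you cite Lemma \ref{lemma: S4_lind_tarski_process} directly (legitimately, since $v_1[\Gamma]$ is an $\Sfourglobal$-theory by Lemma \ref{lemma: pai0_prime_to_S4_prime} and $v_1(\phi)\in Fm_m$), which is if anything tidier.
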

\begin{proof}
    Let us denote by $\tau$ an arbitrary classical tautology. Then $v^\Gamma(\phi)=[v_1(\phi)]_{\Omega v_1[\Gamma]}=1_{v_1[\Gamma]}=[\tau]_{\Omega v_1[\Gamma]}$  (the last identity holds because $1_{v_1[\Gamma]}$ is the top element of $Fm_m/\Omega v_1[\Gamma]$ and contains every theorem) iff $v_1(\phi)\equiv \tau\in v_1[\Gamma]$ iff, by classical logic, $v_1(\phi)\in v_1[\Gamma]$ iff, by lemma \ref{lemma: pai0_bridge_for_canonical_valuation}, $\phi\in\Gamma$. 
\end{proof}


\begin{theorem}\label{th.: pai0_completeness}
    $\Paizero$ is complete w.r.t. the class of $\ModelPaizero$-models.
\end{theorem}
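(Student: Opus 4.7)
The plan is to prove strong completeness via a canonical model construction, with almost all the technical work already discharged by the preceding lemmas; the argument is largely a matter of assembly. First I would verify soundness by routine case analysis: (A1)--(A3) and (MP) hold because $v$ is a Boolean homomorphism into the Boolean reduct of $\B$; (K), (T), (4) and (Nec$_g$) hold because $\B$ is an interior algebra; (A4) follows by direct unwinding of the two-case definition of $v(\phi\to\psi)$, using that the abbreviation $\phi\prec\psi = \psi\to(\phi\lor\neg\phi)$ evaluates to $1^\B$ exactly when $s(N(\phi))\leq_\oplus s(N(\psi))$; and (O1)--(O5), (C1)--(C3) reduce to the join-semilattice laws on $\pair{S,\oplus}$ together with the $N$-transparency $N(\neg\phi) = N(\Box\phi) = N(\phi)$.

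For the completeness direction I argue contrapositively. Suppose $\Gamma \nvdash_{\Paizero} \phi$. Since $\Paizero$ is finitary (via its finite Hilbert presentation) and conservatively extends classical logic (via (A1)--(A3) and (MP)), theorem \ref{th.: lindenbaum_like_lemma} supplies a prime $\Paizero$-theory $\Delta \supseteq \Gamma$ with $\Delta \nvdash_{\Paizero} \phi$. Construct the canonical model $\M^\Delta$ as in definition \ref{def.: pai0_canonical_model}. Lemma \ref{lemma: pai0_prime_to_S4_prime} guarantees that $v_1[\Delta]$ is a prime $\Sfourglobal$-theory, so $\Fm_m/\Omega v_1[\Delta]$ is a genuine interior algebra and serves as the algebra of extensional values; lemma \ref{lemma: pai0_partial_order} together with corollary \ref{cor.: pai0_order_correspondence} ensures that $\langle N[Fm]/{\sim_\Delta},\oplus,\gru\rangle$ is a well-defined algebra of contents; and the case split in the definition of $v_1$ is precisely calibrated to enforce the semantic clause of $\to$ in a $\ModelPaizero$-model.

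Applying lemma \ref{lemma: pai0_canonical_valuation}, we obtain $v^\Delta(\gamma) = 1_{v_1[\Delta]}$ for every $\gamma \in \Gamma \subseteq \Delta$, while $v^\Delta(\phi) \neq 1_{v_1[\Delta]}$ since $\phi \notin \Delta$. Taking $1_{v_1[\Delta]}$ as the designated value, this exhibits a $\ModelPaizero$-model separating $\Gamma$ from $\phi$, which is the required contrapositive.

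The only real obstacle I anticipate is the soundness bookkeeping, and specifically for (A4) and the content-tracking axioms (C3) and (O3). In (A4) one must thread together the two-case definition of $v(\phi\to\psi)$, the interior-algebra interpretation of $\Box$, and the $\prec$-abbreviation in both directions of the biconditional; (C3) reduces semantically to the observation that $\gru$ respects equality of contents on $S$, which is essentially the $(\gru)$-compatibility argument established just before lemma \ref{lemma: pai0_partial_order}; and (O3) requires exploiting that $s(N(\phi)) \leq_\oplus s(N(\psi))$ implies $s(N(\phi\lor\psi)) = s(N(\phi)) \oplus s(N(\psi)) = s(N(\psi))$. Everything else is either routine classical/modal verification or a direct invocation of the lemmas in hand.
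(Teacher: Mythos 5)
Your proposal is correct and follows essentially the same route as the paper's own proof: soundness by routine verification (with the $\prec$-axioms checked through the correspondence between $v(\alpha\prec\beta)=1$ and the semilattice order), and adequacy by contraposition using theorem \ref{th.: lindenbaum_like_lemma} to extend $\Gamma$ to a prime theory $\Delta$ omitting $\phi$, then invoking the canonical model of definition \ref{def.: pai0_canonical_model} and lemma \ref{lemma: pai0_canonical_valuation} to separate $\Gamma$ from $\phi$. The extra detail you supply on the soundness bookkeeping for (A4), (O3) and (C3) is accurate and merely fills in what the paper leaves as "routine checking."
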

\begin{proof}
    Soundness is proved by routine checking; in particular, all the axioms containing $\prec$ can be easily checked by the correspondence between $v(\alpha\prec\beta)=1$ and the fact that $N(s(\alpha))\leq_\oplus N(s(\beta))$ holds in the relative semilattice. For adequacy, by contraposition assume that $\Gamma\nvdash_{\Paizero}\phi$. Using theorem \ref{th.: lindenbaum_like_lemma}, extend $\Gamma$ to a prime $\Paizero$-theory $\Delta\supseteq\Gamma$ s.t. $\phi\notin\Delta$. Consider the canonical model for $\Delta$ of definition \ref{def.: pai0_canonical_model}. By lemma \ref{lemma: pai0_canonical_valuation}, for all $\gamma\in\Delta, v^\Gamma(\gamma)=1_{v_1[\Delta]}$, which holds in particular for all $\gamma\in\Gamma$, while $v^\Gamma(\phi)\neq 1_{v_1[\Delta]}$.
\end{proof}

In order to strengthen $\Paizero$ and recover Parry's logic of analytic implication (in its global version), it is enough to extend the system with:

\begin{itemize}
    \item[(A5)] $((\phi\to\psi)\prec(\phi\lor\psi))\land((\phi\lor\psi)\prec(\phi\to\psi))$
\end{itemize}

\n We define $\Paifull=\pair{\lang_\mathbf{ML}^\to,\vdash_{\Paifull}}=\Paizero+(A5)$, where by $\logic$ + (Ax$_1$),... (Ax$_n$) we mean the axiomatic extension of logic $\logic$ by formulae (Ax$_1$),... (Ax$_n$).

As (A5) suggests, what we want in the semantics for $\Paifull$ is for the content of $\phi\to\psi$ to coincide with that of $\phi\lor\psi$, in other words we want the $\gru$ operation to collapse on $\oplus$. This motivates the following definition: 

\begin{definition}\label{def.: pai-model}
    A $\ModelPaifull$-model is a $\ModelPaizero$-model $\pair{\B,\pair{S,\oplus,\gru}, N,v,s}$ which satisfies the further condition $\gru=\oplus$. Equivalently, a $\ModelPaifull$-model is a structure $\pair{\B,\pair{S,\oplus}, N,v,s}$ s.t. $ N(\phi\to\psi)= N(\phi)\oplus N(\psi)$, while the remaining components are defined adapting definition \ref{def.: pai0-model}.
\end{definition}

\n We will stick to the latter definition. The completeness of $\Paifull$ is an adaptation of that of $\Paizero$ that starts with lemma \ref{lemma: pai0_partial_order} and ends with theorem \ref{th.: pai0_completeness}. We make just a few comments.

The relation $ N(\phi)\sim_\Gamma N(\psi)$ defined over $ N(Fm)$ is now used to obtain an algebra of contents $\langle N[Fm]/$$\sim_\Gamma,\oplus\rangle$. Due to the simpler type, there is no need to consider the compatibility of the equivalence w.r.t. $\gru$, therefore axiom (C3) is no longer used. Lemma \ref{lemma: pai0_partial_order} is modified in the following way:

\begin{lemma}\label{lemma: paifull_partial_order}
    Let $\Gamma$ be a $\Paifull$-theory. For $\phi,\psi\in Fm$, it holds: $[ N(\phi)]_{\sim_\Gamma}\leq_\oplus[ N(\psi)]_{\sim_\Gamma}\Leftrightarrow N(\phi)\leq_\Gamma N(\psi)$.
\end{lemma}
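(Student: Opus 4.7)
The plan is to observe that the statement and its proof are essentially a restriction of Lemma \ref{lemma: pai0_partial_order} to the simpler setting in which $\gru$ has collapsed onto $\oplus$. Since $\Paifull = \Paizero + (A5)$, every $\Paifull$-theory is in particular a $\Paizero$-theory, and the axioms (C1), (O2), (O3) — the only ones used in the proof of Lemma \ref{lemma: pai0_partial_order} — remain available. The ($\oplus$) congruence verification carried out just before Lemma \ref{lemma: pai0_partial_order} transfers verbatim, so $\langle N[Fm]/{\sim_\Gamma}, \oplus\rangle$ is a well-defined quotient join-semilattice; the ($\gru$) clause is no longer needed, since in $\ModelPaifull$-models $\gru$ has been identified with $\oplus$ and hence axiom (C3) plays no role.

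For the forward direction I would start from $[N(\phi)]_{\sim_\Gamma} \leq_\oplus [N(\psi)]_{\sim_\Gamma}$, rewrite it as $[N(\phi \lor \psi)]_{\sim_\Gamma} = [N(\psi)]_{\sim_\Gamma}$, deduce $(\phi \lor \psi) \prec \psi \in \Gamma$ from the definition of $\sim_\Gamma$, and then combine this with $\phi \prec (\phi \lor \psi) \in \Gamma$ (an instance of (C1)) via transitivity (O2) to conclude $\phi \prec \psi \in \Gamma$, which is $N(\phi) \leq_\Gamma N(\psi)$.

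For the converse, from $\phi \prec \psi \in \Gamma$ I would extract $(\phi \lor \psi) \prec \psi \in \Gamma$ using (O3), and $\psi \prec (\phi \lor \psi) \in \Gamma$ using (C1); taken together these yield $N(\phi \lor \psi) \sim_\Gamma N(\psi)$, i.e. $[N(\phi)]_{\sim_\Gamma} \oplus [N(\psi)]_{\sim_\Gamma} = [N(\psi)]_{\sim_\Gamma}$, which is the required join-semilattice inequality.

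The only point worth double-checking — rather than a genuine obstacle — is that no step of the original proof secretly depends on (C3) or on any property of $\gru$ beyond its being a groupoid operation, so that collapsing $\gru$ to $\oplus$ via (A5) does not disturb the argument. Since the proof above stays within the Boolean-plus-$\prec$ fragment, this is straightforward to confirm.
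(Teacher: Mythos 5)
Your proposal misses the one genuinely new step that distinguishes this lemma from Lemma \ref{lemma: pai0_partial_order}, and it is precisely the step where (A5) enters. In a $\ModelPaifull$-model the translation satisfies $N(\phi\to\psi)=N(\phi)\oplus N(\psi)=N(\phi\lor\psi)$, so the term $N(\phi)\oplus N(\psi)$ now has \emph{two} essentially different preimages under $N$: the disjunction $\phi\lor\psi$ and the conditional $\phi\to\psi$. Since $\leq_\Gamma$ and $\sim_\Gamma$ are defined on $N[Fm]$ by pulling back to membership of $\prec$-formulae in $\Gamma$, the inference you describe in the forward direction --- from $N(\psi)\sim_\Gamma N(\phi)\oplus N(\psi)$ to ``$(\phi\lor\psi)\prec\psi\in\Gamma$, by the definition of $\sim_\Gamma$'' --- is exactly where the ambiguity bites: the definition only hands you $\chi\prec\psi\in\Gamma$ for \emph{some} preimage $\chi$ of $N(\phi)\oplus N(\psi)$, and that preimage may be $\phi\to\psi$ rather than $\phi\lor\psi$. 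The paper's proof splits into these two cases; in the second it uses (A5) to get $(\phi\lor\psi)\prec(\phi\to\psi)\in\Gamma$ and then (O2) to pass to $(\phi\lor\psi)\prec\psi\in\Gamma$, after which the argument of Lemma \ref{lemma: pai0_partial_order} resumes. The same identification is what makes $\leq_\Gamma$ well defined at all in the $\Paifull$ setting (analogously to how (O4)--(O5) handle the collapse of negations and boxes under $N$ for $\Paizero$).

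So the ``only point worth double-checking'' is not whether the old proof secretly depends on (C3) or on properties of $\gru$ --- you are right that it does not --- but whether collapsing $\gru$ onto $\oplus$ creates new coincidences of translations that the relation $\leq_\Gamma$ must respect. It does, your proof never invokes (A5), and without (A5) the forward direction (and indeed the well-definedness of $\sim_\Gamma$) fails. The converse direction of your argument is fine as written, since there you may simply choose the preimage $\phi\lor\psi$.
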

\begin{proof}
    For the left-to-right direction, let $[ N(\phi)]_{\sim_\Gamma}\leq_\oplus[ N(\psi)]_{\sim_\Gamma}$, that is $[ N(\psi)]_{\sim_\Gamma}=[ N(\phi)]_{\sim_\Gamma}\oplus[ N(\psi)]_{\sim_\Gamma}=[ N(\phi)\oplus N(\psi)]_{\sim_\Gamma}$. Hence $ N(\psi)\sim_\Gamma N(\phi)\oplus N(\psi)$. Now $ N(\phi)\oplus N(\psi)$ can have two preimages: either $\phi\lor\psi$ or $\phi\to\psi$. If $ N(\phi)\oplus N(\psi)= N(\phi\lor\psi)$ the proof is the same as in lemma \ref{lemma: pai0_partial_order}. If $ N(\phi)\oplus N(\psi)= N(\phi\to\psi)$ then $(\phi\to\psi)\prec\psi$, by (A5) $(\phi\lor\psi)\prec(\phi\to\psi)$, and by (O2) $(\phi\lor\psi)\prec\psi$. Using (C1) $\phi\prec(\phi\lor\psi)\in\Gamma$, then $\phi\prec\psi\in\Gamma$ by (O2). We conclude $ N(\phi)\leq_\Gamma N(\psi)$. The remaining of the proof follows lemma \ref{lemma: pai0_partial_order}.
\end{proof}

\n With this, the reasoning which leads to corollary \ref{cor.: pai0_order_correspondence} can be repeated for $\Paifull$:

\begin{corollary}\label{cor.: paifull_order_correspondence}
    Let $\Gamma$ be a $\Paifull$-theory. For $\phi,\psi\in Fm$, it holds: $[ N(\phi)]_{\sim_\Gamma}\leq_\oplus[ N(\psi)]_{\sim_\Gamma}\Leftrightarrow\phi\prec\psi\in\Gamma$.
\end{corollary}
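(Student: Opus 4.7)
The plan is to mimic the derivation of corollary \ref{cor.: pai0_order_correspondence} from lemma \ref{lemma: pai0_partial_order}: the corollary is obtained by composing lemma \ref{lemma: paifull_partial_order} with the defining clause $N(\phi)\leq_\Gamma N(\psi)\Leftrightarrow \phi\prec\psi\in\Gamma$. Chaining the two biconditionals immediately yields $[N(\phi)]_{\sim_\Gamma}\leq_\oplus[N(\psi)]_{\sim_\Gamma}\Leftrightarrow\phi\prec\psi\in\Gamma$, so no further semantic content needs to be introduced.

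The one point I would verify explicitly is that $\leq_\Gamma$ is still well-defined on $N[Fm]$ in the $\Paifull$ setting. In $\Paizero$, well-definedness hinged on (O4--5), which ensure that formulae sharing a translation modulo negation and $\Box$ are interchangeable inside $\prec$-statements. In $\Paifull$ we have an additional source of collapse, since $N(\phi\lor\psi)=N(\phi)\oplus N(\psi)=N(\phi\to\psi)$; to handle it, axiom (A5) supplies $(\phi\lor\psi)\prec(\phi\to\psi)\in\Gamma$ and $(\phi\to\psi)\prec(\phi\lor\psi)\in\Gamma$, and then (O2) lets us swap $\phi\lor\psi$ with $\phi\to\psi$ inside any $\prec$-statement of $\Gamma$. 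So $\phi_1\prec\psi_1\in\Gamma$ iff $\phi_2\prec\psi_2\in\Gamma$ whenever $N(\phi_1)=N(\phi_2)$ and $N(\psi_1)=N(\psi_2)$, exactly as required.

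I do not anticipate any genuine obstacle: the real work has already been absorbed into lemma \ref{lemma: paifull_partial_order}, where (A5) was invoked to cover the new preimage case $N(\phi)\oplus N(\psi)=N(\phi\to\psi)$. The corollary is a clean bookkeeping step that repackages that lemma in terms of the syntactic connective $\prec$.
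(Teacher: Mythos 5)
Your proposal matches the paper's (implicit) argument exactly: the corollary is obtained by chaining Lemma \ref{lemma: paifull_partial_order} with the defining clause $N(\phi)\leq_\Gamma N(\psi)\Leftrightarrow\phi\prec\psi\in\Gamma$, just as Corollary \ref{cor.: pai0_order_correspondence} was obtained from Lemma \ref{lemma: pai0_partial_order}. Your additional check that $\leq_\Gamma$ remains well-defined despite the new collapse $N(\phi\lor\psi)=N(\phi\to\psi)$, handled via (A5) and (O2), is a point the paper leaves tacit but is correctly resolved as you describe.
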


The canonical model for $\Paifull$ is an adaptation of the one for $\Paizero$:

\begin{definition}\label{def.: paifull_canonical_model}
    Let $\Gamma$ be a prime $\Paifull$-theory. The canonical model for $\Gamma$ is the $\ModelPaifull$-model $\M^\Gamma=\langle\Fm_m/\Omega v_1[\Gamma],\langle N[Fm]/$$\sim_\Gamma,\oplus\rangle, N,v^\Gamma,\pi\rangle$, with $v^\Gamma=v_2\circ v_1$ and $\pi$ defined as in definition \ref{def.: pai0_canonical_model}.
\end{definition}

\begin{theorem}\label{th.: paifull_completeness}
    $\Paifull$ is complete w.r.t. the class of $\ModelPaifull$-models.
\end{theorem}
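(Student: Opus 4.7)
The proof is a direct adaptation of the completeness argument for $\Paizero$ (theorem \ref{th.: pai0_completeness}), building on the groundwork already laid down in lemma \ref{lemma: paifull_partial_order} and corollary \ref{cor.: paifull_order_correspondence}. For soundness, note that every $\ModelPaifull$-model is, by definition \ref{def.: pai-model}, a $\ModelPaizero$-model with the further identification $\gru=\oplus$, so all axioms of $\Paizero$ automatically remain valid. The new axiom (A5) holds because the identification gives $s(N(\phi\to\psi))=s(N(\phi)\oplus N(\psi))=s(N(\phi\lor\psi))$ in every model, so the contents of $\phi\to\psi$ and $\phi\lor\psi$ coincide and both $\prec$-conjuncts in (A5) evaluate to the top of $\B$.

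For adequacy I would reason by contraposition. Assuming $\Gamma\nvdash_{\Paifull}\phi$, theorem \ref{th.: lindenbaum_like_lemma} yields a prime $\Paifull$-theory $\Delta\supseteq\Gamma$ with $\phi\notin\Delta$; I would then consider the canonical model $\M^\Delta$ of definition \ref{def.: paifull_canonical_model} and re-establish the analogues of lemmas \ref{lemma: pai0_prime_to_S4_prime}, \ref{lemma: pai0_bridge_for_canonical_valuation}, and \ref{lemma: pai0_canonical_valuation}. The statement that $v_1[\Delta]$ is a prime $\Sfourglobal$-theory carries over verbatim, since its proof invokes only the $\Sfourglobal$ axioms, (MP), (Nec$_g$), and the consistency clause built into the definition of $v_1$, none of which depend on the $\gru$-structure. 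The bridge lemma $\phi\in\Delta\Leftrightarrow v_1(\phi)\in v_1[\Delta]$ goes through by induction on complexity exactly as before; in the $\alpha\to\beta$ case one appeals to corollary \ref{cor.: paifull_order_correspondence} in place of its $\Paizero$ counterpart, while the rest of the cases are unchanged. From these one deduces $v^\Delta(\gamma)=1_{v_1[\Delta]}$ for every $\gamma\in\Delta$ (hence for every $\gamma\in\Gamma$) while $v^\Delta(\phi)\neq 1_{v_1[\Delta]}$, producing the required countermodel.

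The delicate point, and the only one where the collapse $\gru=\oplus$ actually bites, is the preimage ambiguity of the translation: now $N(\phi\lor\psi)$ and $N(\phi\to\psi)$ coincide, so the congruence $\sim_\Gamma$ and the induced order $\leq_\oplus$ must be shown to be well-defined irrespective of which preimage is picked. This is exactly what (A5) is designed to handle, and it is already absorbed into lemma \ref{lemma: paifull_partial_order} via the two-case analysis on the preimages of $N(\phi)\oplus N(\psi)$. Once that lemma and its corollary are in hand, no new ideas are needed and the remaining verifications are bookkeeping identical to those in the $\Paizero$ case.
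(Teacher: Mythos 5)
Your proposal is correct and follows essentially the same route as the paper: contraposition via theorem \ref{th.: lindenbaum_like_lemma}, the canonical model of definition \ref{def.: paifull_canonical_model}, and re-running the $\Paizero$ lemmas, with the preimage ambiguity of $N(\phi)\oplus N(\psi)$ handled by (A5) through lemma \ref{lemma: paifull_partial_order} and corollary \ref{cor.: paifull_order_correspondence}. You correctly isolate the one genuinely new point, which is exactly the point the paper singles out as well.
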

\begin{proof}
    Assume $\Gamma\nvdash\phi$ and use theorem \ref{th.: lindenbaum_like_lemma} to extend $\Gamma$ to a prime $\Paifull$-theory $\Delta$ s.t. $\phi\notin\Delta$. This can be done since the mentioned theorem is a general algebraic fact that holds for $\Paifull$ as well. The proof then is the same as theorem \ref{th.: pai0_completeness}, using the canonical model for $\Delta$ per definition \ref{def.: paifull_canonical_model} as countermodel.
\end{proof}

Recall that in the completeness proof axiom (C3) played no role. We can say more and remove (O5), that is box transparency, as well. Using the above completeness result we can simplify the language, observing that $\Box$ is logically equivalent to a term using the remaining connectives of the language, overcoming the issue arisen with $\Paizero$. We introduce the abbreviation $\tau_\phi:=\phi\lor\neg\phi$, in order to express the necessity modality operator as $\tau_\phi\to\phi$. In fact since $ N(\phi\to\psi)= N(\phi\lor\psi)$ in a $\ModelPaifull$-model, $\Box\phi$ and $\tau_\phi\to\phi$ coincide both on extensional and intensional values. The previous axiomatic system for $\Paifull$ can be reformulated in the language $\pair{\neg,\lor,\to}$.


That (O5) can be derived is easy to check. By (C1) $\psi\prec((\psi\lor\neg\psi)\lor\psi)$, by (A5) $((\psi\lor\neg\psi)\lor\psi)\prec((\psi\lor\neg\psi)\to\psi)$, by (O2) $\psi\prec((\psi\lor\neg\psi)\to\psi)=(\tau_\psi\to\psi)$. Hence $\phi\prec\psi$ implies, using (O2), $\phi\prec(\tau_\psi\to\psi)$. For the other direction, by (A5) $((\psi\to\neg\psi)\lor\psi)\prec((\psi\lor\neg\psi)\lor\psi)=(\psi\lor\neg\psi)$, where the last identity holds by classical logic. We have $\psi\prec\psi$ and $\psi\prec\neg\psi$, respectively using (O1) and (O4), so by (C2) $(\psi\lor\neg\psi)\prec\psi$. So $(\tau_\psi\to\psi)\prec\psi$, hence from $\phi\prec(\tau_\psi\to\psi)$ we obtain $\phi\prec\psi$. We conclude $(\phi\prec\psi)\equiv(\phi\prec(\tau_\psi\to\psi))$. Similarly for transparency in the left position.

We can finally prove the claim that $\Paifull$ is actually the global companion of Fine's logic of analytic implication $\FineGlobalPai$.

\begin{theorem}\label{th.: uguaglianza_pai_e_fine}
    $\Paifull$ and $\FineGlobalPai$ are the same logic.
\end{theorem}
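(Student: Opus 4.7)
The plan is to establish the equality by proving both inclusions $\Paifull\subseteq\FineGlobalPai$ and $\FineGlobalPai\subseteq\Paifull$. Since the two calculi share the language $\lang_\mathbf{ML}^\to$ and the inference rules (MP) and (Nec$_g$), it is enough to check that every axiom of one system is a theorem of the other. Rather than carrying out these derivations by hand, I will discharge them semantically, using the two completeness theorems already available: Theorem \ref{th.: paifull_completeness} for $\Paifull$ and Proposition \ref{th.: pai_global_kripke_completeness} for $\FineGlobalPai$.

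For $\FineGlobalPai\subseteq\Paifull$, the task is to show that each of Fine's axioms is valid in every $\ModelPaifull$-model. The modal base (K), (T), (4) holds because $\B$ is an interior algebra, and axiom (6) coincides with (A4). Fine's axiom (2) reduces to transitivity of $\leq_\oplus$; (3) to compatibility of join with the order, once one notes that $N(\phi\land\psi)=N(\phi)\oplus N(\psi)$; for (4) I would check by induction on $\phi$ that $s(N(\phi))$ is the $\oplus$-join of the $s(p)$ for $p\in Var(\phi)$, so that variable inclusion transfers to content inclusion. Axiom (5) is the delicate case: here I would exploit that in a $\ModelPaifull$-model the content condition $s(N(\phi))\leq_\oplus s(N(\psi))$ is state-independent, hence $v(\phi\prec\psi)\in\{0^\B,1^\B\}$, and whenever this value is $1^\B$ also $v(\Box(\phi\prec\psi))=\Box^\B 1^\B=1^\B$ by (EqK1).

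For the reverse inclusion $\Paifull\subseteq\FineGlobalPai$, I would analogously verify each $\Paifull$-axiom not already among Fine's in his Kripke semantics, relying on the recursive clauses $t_w(\phi\circ\psi)=t_w(\phi)\oplus_w t_w(\psi)$ for every binary $\circ$ (including $\to$), on $t_w(\neg\phi)=t_w(\phi)$, and on topic transparency of $\Box$. From these, (O1)--(O3) and (C1)--(C2) reduce to standard join-semilattice facts; (O4) and (O5) follow from topic transparency of $\neg$ and $\Box$; (C3) is immediate from compositionality of $t_w$ on $\to$; and (A5) falls out of $t_w(\phi\to\psi)=t_w(\phi)\oplus_w t_w(\psi)=t_w(\phi\lor\psi)$.

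The principal obstacle I anticipate is not with the content axioms themselves but with the interplay between modality and content inclusion: validating (5) in our generalized semantics and (O5) in Fine's semantics requires the content relation to be stable across the accessibility relation. In Fine's frames this rests on the persistence condition built into Definition \ref{def.: Fine_PAI_model} together with topic transparency of $\Box$, while in our $\ModelPaifull$-setting it is automatic because contents live in a single semilattice independent of any state; making both points precise is what will carry the bulk of the argument.
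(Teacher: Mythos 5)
Your proposal takes essentially the same route as the paper: the paper also proves the identity by checking that $\Paifull$ is sound with respect to Fine's $\ModelPaioriginal$-models (with consequence read globally) and that $\FineGlobalPai$ is sound with respect to $\ModelPaifull$-models, and then invokes the two completeness theorems \ref{th.: pai_global_kripke_completeness} and \ref{th.: paifull_completeness}. The axiom-by-axiom verifications you sketch are exactly the content the paper leaves to the reader as routine, and your observations (e.g.\ that $v(\phi\prec\psi)\in\{0^\B,1^\B\}$ handles Fine's axiom (5), and that persistence of content inclusion handles (O5) on the Kripke side) are correct.
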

\begin{proof}
    It is enough to check that $\Paifull$ is sound w.r.t. to Fine models of definition \ref{def.: Fine_PAI_model} (with the relation of logical consequence intended in the global sense), and that $\FineGlobalPai$ is sound w.r.t. $\ModelPaioriginal$-models. Then using the completeness theorems \ref{th.: pai_global_kripke_completeness} and \ref{th.: paifull_completeness} the result follows.
\end{proof}

In order to appreciate the relation between the sublogic $\Paizero$ and $\FergusonPai$, we need to modify the language of Ferguson's system to equate the types. In Ferguson's logic, $\Box$ is neither a primitive connective nor it is definable. At the same time, since this $\FergusonPai$ is already complete w.r.t. to a class of augmented $\mathbf{S4}$-frames, extending the language with $\Box$ and adding to the calculus the axioms of $\mathbf{S4}$ and the (local) necessitation rule doesn't alter the completeness theorem \cite[th. 3]{Ferguson23_subject_matter1}, which can be replicated for the system $\FergusonPaiBox=\pair{\lang_\mathbf{ML}^\to,\vdash_{\FergusonPaiBox}}=\FergusonPai+$(K)+(T)+(4)+(Nec). Finally, as it was done with Fine's logic in theorem \ref{th.: pai_global_kripke_completeness}, we can adapt Ferguson's completeness proof of $\FergusonPai$ for the global companion $\FergusonGlobalPai=\pair{\lang_\mathbf{ML}^\to,\vdash_{\FergusonGlobalPai}}=\FergusonPai+$(K)+(T)+(4)+(Nec$_g$) w.r.t. the same class of frames of definition \ref{def.: ferguson_ca_pai_model}, interpreting now the logical consequence globally. With these modifications we can adapt the same strategy of theorem \ref{th.: uguaglianza_pai_e_fine}. 

\begin{proposition}\label{th.: uguaglianza_pai_e_ferguson}
    $\Paizero$ and $\FergusonGlobalPai$ are the same logic.
\end{proposition}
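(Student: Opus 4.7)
The plan is to mirror the proof of Theorem \ref{th.: uguaglianza_pai_e_fine}: establish mutual soundness of each logic with respect to the class of models characterising the other, and then combine this with the two completeness results---Theorem \ref{th.: pai0_completeness} for $\Paizero$, and the global adaptation of Ferguson's completeness theorem for $\FergusonGlobalPai$ obtained along the lines of Proposition \ref{prop.: global_K_completeness} and Theorem \ref{th.: pai_global_kripke_completeness}. These two pieces together force the equality of the consequence relations.

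For the inclusion $\Paizero \subseteq \FergusonGlobalPai$, I would fix an arbitrary $\ModelFergusonPai$-model and verify each axiom of $\Paizero$ at each world. The classical axioms (A1)-(A3), the modal axioms (K), (T), (4), together with (MP) and (Nec$_g$) read in the global sense, are handled by the underlying $\mathbf{S4}$-frame. Axiom (A4) reproduces exactly the forcing clause for $\to$, once $\psi \prec \phi$ is read through the content-inclusion clause for $\prec$. The content axioms (O1)-(O5), (C1), (C2) follow from the semilattice structure of each $\pair{T_w,\oplus_w}$ combined with the prescribed behaviour of $t_w$ on $\neg$, $\Box$ and binary connectives, plus the persistence of content inclusion guaranteed by the homomorphisms $h_{w,w'}$. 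Only axiom (C3) requires a specific argument: if its antecedent holds at a world $w$, then $t_w(\phi) = t_w(\chi)$ and $t_w(\psi) = t_w(\zeta)$ by antisymmetry of $\leq_w$, so that $t_w(\phi \to \psi) = t_w(\phi) \gru_w t_w(\psi) = t_w(\chi) \gru_w t_w(\zeta) = t_w(\chi \to \zeta)$, whence $(\phi \to \psi) \prec (\chi \to \zeta)$ holds at $w$.

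For the converse inclusion $\FergusonGlobalPai \subseteq \Paizero$, I would take a $\ModelPaizero$-model $\pair{\B,\Se,N,v,s}$ and verify that every axiom of $\FergusonGlobalPai$ is evaluated to the top element of $\B$. The interior algebra $\B$ plays the role of the truth-value side of an $\mathbf{S4}$-Kripke frame, and the clause for $\to$ in $\Paizero$---where the content-inclusion condition $s(N(\psi)) \leq_\oplus s(N(\phi))$ gates the extensional value $\Box^\B v(\neg\phi \lor \psi)$---matches Ferguson's Kripke clause via the standard duality between interior algebras and $\mathbf{S4}$-frames. Because $\gru$ is unconstrained in both semantics, and because no analogue of axiom (A5) is present in either system, this direction is fairly direct.

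The main obstacle I anticipate is not the soundness calculations themselves but the global adaptation of Ferguson's completeness proof of \cite{Ferguson23_subject_matter1}. One must verify that the canonical model construction for the local system $\FergusonPaiBox$ can be modified in the manner of Proposition \ref{prop.: global_K_completeness}: restricting the universe of the canonical model to maximal consistent theories expanding the given $\FergusonGlobalPai$-theory $\Gamma$, and checking that (Nec$_g$) gives $\Gamma$ the closure needed for the existence lemma to go through in the presence of the topic-algebraic and homomorphism data accompanying each possible world. Once this adaptation is established, the two completeness theorems close both inclusions and the equality $\Paizero = \FergusonGlobalPai$ follows.
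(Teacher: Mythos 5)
Your proposal is correct and follows essentially the same route as the paper: the paper likewise reduces the claim to mutual soundness checks combined with Theorem \ref{th.: pai0_completeness} and a global adaptation of Ferguson's completeness theorem (obtained exactly as in Proposition \ref{prop.: global_K_completeness} and Theorem \ref{th.: pai_global_kripke_completeness}), mirroring the strategy of Theorem \ref{th.: uguaglianza_pai_e_fine}. Your explicit treatment of (C3) via antisymmetry of $\leq_w$ and the congruence-like behaviour of $\gru_w$ is a correct filling-in of a detail the paper leaves to the reader.
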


\end{section}

\begin{section}{Recapturing local logics of analytic implication}\label{sec: local_Parry_logic}

So far we have been able to apply the new generalized Epstein semantics only to approximate our goal, Parry's (local) logic of analytic implication. Its global companion is in fact a stronger logic compared to Parry's original one, as witnessed by the rule of global necessitation: $\phi\vdash\Box\phi$. With the previous construction employed in completeness theorem \ref{th.: paifull_completeness} we are not able to build countermodels that invalidate such rule. Performing the Lindenbaum-Tarski process for the premise of the rule, the equivalence class of $\phi$ will always coincide with the top element $1$ of the quotient of the formula algebra, but $1$ is a fix-point for $\Box$, because $\Box 1\approx 1$, therefore the equivalence class of $\Box\phi$ coincides with $1$ as well. We conclude that either there are not enough countermodels in the class of $\ModelPaifull$-models to prove completeness or the notion of logical consequence is not the correct one. As we are going to see, it is the latter case.

By observing at algebraic level the relation between local and global modal logics, we can get a hint towards the solution for a complete general Epstein semantics for $\Paioriginal$: in the case of normal modal logic \textbf{K}$^g$, the global logic is complete w.r.t. Boolean algebras with operators when the entailment relation is intended in the standard sense, i.e. the preservation of $1$, or in other words it is the $1$-assertional logic induced by said class of algebras. In order to obtain a complete semantics for local \textbf{K} we don't need to switch or enlarge the class, but the local logic will be the logic of order induced by the class of Boolean algebras with operators.
\begin{definition}
    An ordered algebra is a algebra with a partial order $\leq$ over its support. Let $\class{K}$ be a class of ordered $\tau$-algebras. The \textit{logic of order} (or logic preserving degrees of truth) induced by $\class{K}$ is the consequence relation defined as: $\Gamma\vDash_\class{K}^\leq\phi\Leftrightarrow$ for all $\A\in\class{K}$, homomorphisms $h:Fm_\tau\to A$ and $x\in A$, if $x\leq h(\gamma)$ for each $\gamma\in\Gamma$, then $x\leq h(\phi)$.
\end{definition}

The same holds for the standard extensions of \textbf{K}, like $\mathbf{S4}$: $\Sfourglobal$ is the $1$-assertional logic of interior algebras, $\Sfourlocal$ is the logic of order of, again, interior algebras. We can easily adapt the definition to generalized Epstein semantics:
\begin{definition}
    Let $\class{K}$ be a class of $gE$-models s.t. for each $\pair{\A,\B, N,v_A,v_B}\in\class{K}, \A$ is an ordered algebra. The logic of order induced by $\class{K}$ is the consequence relation defined as: $\Gamma\vDash_\class{K}^\leq\phi\Leftrightarrow$ for all $\A\in\class{K}$ and $x\in A$, if $x\leq v_A(\gamma)$ for each $\gamma\in\Gamma$, then $x\leq v_A(\phi)$.
\end{definition}

We claim that $\Paioriginal$ is the logic of order of $\ModelPaifull$-models. In order to do it we are going to use the Kripke completeness for $\Paioriginal$ of \cite{Fine86_analytic_implication}, showing that giving a countermodel for a non-valid entailment of $\Paioriginal$ we can build a $\ModelPaifull$-model whose logic of order invalidates the same entailment. We start by adapting the notion of generated submodel for Fine's $\ModelPaioriginal$-models, a standard tool in modal invariance results (cf. \cite{Blue_book}, def. 2.5, prop. 2.6).

\begin{definition}\label{def.: generated_submodel}
    A $\ModelPaioriginal$-model $\pair{W',R',\pair{T'_w,\oplus_w}_{w\in W'},v',\{t'_w\}_{w\in W'}}$ is a \textit{generated submodel} of $\pair{W,R,\pair{T_w,\oplus_w}_{w\in W},v,\{t_w\}_{w\in W}}$ if $W'\subseteq W, R'=R\cap W^2, v'(p)=v(p)\cap W'$ for all $p\in Var$, and if $x\in W'$ then $R[x]\subseteq W'$. 
\end{definition}

\begin{lemma}\label{lem.: generated_submodel_satisfiability}
    Let $\M'=\pair{W',R',\pair{T_w,\oplus_w}_{w\in W'},v',\{t_w\}_{w\in W'}}$ be a generated submodel of $\M=\pair{W,R,\pair{T_w,\oplus_w}_{w\in W},v,\{t_w\}_{w\in W}}$. For every $w\in W'$: $\M',w\Vdash\phi\Leftrightarrow\M,w\Vdash\phi$.
\end{lemma}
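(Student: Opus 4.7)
The plan is to proceed by induction on the complexity of $\phi$, exploiting two structural facts that follow directly from Definition \ref{def.: generated_submodel}: first, for every $w \in W'$ the semilattice $\pair{T_w,\oplus_w}$ and the content assignment $t_w$ are literally the same in $\M'$ and $\M$, hence any statement of the form $t_w(\phi) \leq_w t_w(\psi)$ has the same truth value in both models; second, the ``downward closure'' condition $R[x] \subseteq W'$ for $x \in W'$ guarantees $R'[w] = R[w]$ for each $w \in W'$, so quantification over successors yields the same witnesses in both models.

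For the base case, $\M',w \Vdash p$ iff $w \in v'(p) = v(p) \cap W'$ iff $w \in v(p)$ (using $w \in W'$) iff $\M,w \Vdash p$. The Boolean cases for $\neg$ and $\lor$ are routine applications of the inductive hypothesis. For $\Box\phi$, one uses $R'[w] = R[w]$: the successors through $R'$ all lie in $W'$ and are exactly the successors through $R$, so the inductive hypothesis applies at each of them, giving $\M',w \Vdash \Box\phi$ iff all $w' \in R[w]$ satisfy $\phi$ in either model iff $\M,w \Vdash \Box\phi$.

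The two content-sensitive cases are the only ones requiring a comment. For $\phi \prec \psi$, the forcing clause reduces to $t_w(\phi) \leq_w t_w(\psi)$, which is an intrinsic statement about the semilattice at $w$ and is unchanged when passing between $\M'$ and $\M$. For $\phi \to \psi$, the clause is the conjunction of $t_w(\phi) \leq_w t_w(\psi)$ and ``$\forall w' \in R[w],\ w' \Vdash \phi \supset \psi$''; the first conjunct is preserved as just noted, and the second is preserved by combining $R'[w] = R[w]$ with the inductive hypothesis applied to $\phi \supset \psi$ at each successor, which is a Boolean compound and hence covered by the previous cases.

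I do not expect any real obstacle: the persistence-of-content condition in Definition \ref{def.: Fine_PAI_model} needs no special treatment because it concerns pairs of worlds $w, w' \in W$ with $w' \in R[w]$, and the inclusion $R[w] \subseteq W'$ ensures that whichever pairs matter for evaluating formulae at $w \in W'$ already lie inside the submodel. The argument is the standard generated-submodel invariance proof, with the only novelty being the observation (immediate from the definition) that the content structure at each retained world is copied verbatim from $\M$.
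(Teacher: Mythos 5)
Your proof is correct and follows essentially the same route as the paper, which only sketches the argument (induction on complexity, noting that $v'$ restricts $v$, that $t_w$ and $\pair{T_w,\oplus_w}$ are shared, and that closure of $W'$ under $R$-successors handles the modal and $\to$ clauses); you merely spell out the details the paper leaves implicit. One cosmetic slip: the condition $R[x]\subseteq W'$ for $x\in W'$ is \emph{upward} closure (the paper's ``upwards closed w.r.t.\ $R$''), not ``downward closure'', though you apply it correctly.
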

\begin{proof}
    The prove is a straightforward induction over the complexity of $\phi$. Without going through the proof, it is enough to notice that for $w\in W'$, for each propositional variable $v'(p)=v(p)\cap W'$ and $t_w$ is shared between the two models. Therefore the value and content of each atomic formula is preserved between the models, and, by construction of the submodel, the value of every formula is preserved as well (modal formulae are preserved because $W'$ is required to be upwards closed w.r.t. $R$).
\end{proof}

\begin{lemma}\label{lem: Fine_surjective_models}
    For every $\ModelPaioriginal$-model $\pair{W,R,\pair{T_w,\oplus_w}_{w\in W},v,\{t_w\}_{w\in W}}$ there is an equivalent model $\pair{W,R,\pair{T'_w,\oplus_w}_{w\in W},v,\{t'_w\}_{w\in W}}$ in which each $t'_w$ is surjective. 
\end{lemma}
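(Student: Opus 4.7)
The plan is to restrict each content algebra $T_w$ to the image $t_w[Fm]$ of the content-assignment function. This image is automatically closed under $\oplus_w$, yielding the desired surjective assignment onto a subsemilattice, and none of the model conditions is disturbed.

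First I would define, for each $w \in W$, the set $T'_w := t_w[Fm]$ endowed with $\oplus_w$ restricted to $(T'_w)^2$. Closure is immediate from the recursive clause $t_w(\phi \lor \psi) = t_w(\phi) \oplus_w t_w(\psi)$: any two elements $t_w(\phi), t_w(\psi) \in T'_w$ have as their join $t_w(\phi \lor \psi) \in T'_w$. Hence $\langle T'_w, \oplus_w \rangle$ is a join-subsemilattice of $\langle T_w, \oplus_w \rangle$, and the map $t'_w : Fm \to T'_w$ obtained by co-restricting $t_w$ is surjective by construction while still satisfying $t'_w(\neg\phi) = t'_w(\phi)$ and $t'_w(\phi \circ \psi) = t'_w(\phi) \oplus_w t'_w(\psi)$ for binary $\circ$.

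Second, I would verify that $\M' := \langle W, R, \langle T'_w, \oplus_w \rangle_{w \in W}, v, \{t'_w\}_{w \in W} \rangle$ still satisfies the persistence requirement of Definition \ref{def.: Fine_PAI_model}. The induced order $\leq'_w$ on $T'_w$ is simply the restriction of $\leq_w$, so the hypothesis ``if $w' \in R[w]$ then $t_w(p) \leq_w t_w(q)$ implies $t_w(p) \leq_{w'} t_{w'}(q)$'' transfers verbatim to the primed components.

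Finally, I would establish the equivalence $\M, w \Vdash \phi \Leftrightarrow \M', w \Vdash \phi$ by induction on the complexity of $\phi$. The atomic, Boolean and $\Box$ cases are immediate because $v$ and $R$ are unchanged. For $\phi \prec \psi$ and $\phi \to \psi$, the satisfaction conditions depend only on the inequality $t_w(\phi) \leq_w t_w(\psi)$ (and, for $\to$, on the forcing of $\phi \supset \psi$ at $R$-successors), which holds in $\M$ iff it holds in $\M'$ precisely because $\leq'_w$ is inherited from $\leq_w$. No step presents a real obstacle: the construction is a routine subalgebra argument, whose only purpose is to allow subsequent proofs to assume, without loss of generality, that the content assignments in a Fine model are surjective.
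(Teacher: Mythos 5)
Your proof is correct and follows essentially the same route as the paper's: replace each $T_w$ by the image of $t_w$, observe that it is a join-subsemilattice whose induced order is the restriction of $\leq_w$, and note that all forcing clauses (including those for $\prec$ and $\to$) are therefore unaffected. You are in fact slightly more careful than the paper in explicitly checking closure under $\oplus_w$ and the persistence condition.
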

\begin{proof}
    Consider the model $\M'$$=$$\pair{W,R,\pair{t_w[T_w],\oplus_w}_{w\in W},v,\{t_w\}_{w\in W}}$. Clearly each $t_w$ is surjective. We prove that for all $w\in W. \M,w\Vdash\phi\Leftrightarrow\M',w\Vdash\phi$. The claim obviously holds for $\to$-free formulae. For the remaining case: $\M,w\Vdash\alpha\to\beta$ iff $\M,w\Vdash\Box(\alpha\supset\beta)$ and $t_w(\beta)\leq_{\oplus_w}t_w(\alpha)$ iff, by the previous cases and induction hypothesis, $\M',w\Vdash\Box(\alpha\supset\beta)$ and $t_w(\beta)\leq_{\oplus_w}t_w(\alpha)$ (where the latter holds since we are considering the subsemilattice $t_w[T_w]$) iff $\M',w\Vdash\alpha\to\beta$.
\end{proof}

\n In the following we are going to assume all $\ModelPaioriginal$-models to have surjective topic assignment functions.

The next step is to build a $gE$-model out of a $\ModelPaioriginal$-model. In particular, given a rooted $\ModelPaioriginal$-model, we want a $\ModelPaifull$-model that contains all the information stored in the root of Fine's model. In order to move from frames to algebras, we use a construction employed in the J\'{o}nnson-Tarski duality for Boolean algebras with operators (see \cite[ch. 5]{Blue_book}), complex algebras.

\begin{definition}\label{def. complex_algebra}
    For a set $X$, its powerset algebra is the Boolean algebra $\mathscr{P}(\mathbf{X})=\pair{\mathscr{P}(X),\cap,\cup,-,X,\emptyset}$. Given a Kripke frame $\kripkeframe=\pair{W,R}$, consider the powerset algebra $\mathscr{P}(\mathbf{W})$. For $S\subseteq W$, we define the operation: $\Box_R S:=\{ x\in S$ $|$ $R[x]\subseteq S\}$. The \textit{full complex algebra} of $\kripkeframe$ is the algebra $\pair{\mathscr{P}(\mathbf{W}),\Box_R}$.
\end{definition}

It is well-known that starting from a Kripke frame $\pair{W,R}$ its full complex algebra $\pair{\mathscr{P}(\mathbf{W}),\Box_R}$ is a Boolean algebra with operators (\cite[p. 277, prop. 5.22]{Blue_book}). In particular, the complex algebra of a $\mathbf{S4}$-frame is an interior algebra. Since $\ModelPaioriginal$-models are based on $\mathbf{S4}$-frames, we can obtain in this way corresponding interior algebras. 

\begin{lemma}\label{lem.: paioriginal_main_correspondence}
    Let $\M=\pair{W,R,\pair{T_w,\oplus_w}_{w\in W},v,\{t_w\}_{w\in W}}$ be a rooted $\ModelPaioriginal$-model with $@$ its root, and let $\pair{\mathscr{P}(\mathbf{W}),\Box_R}$ be its frame's full complex algebra. Consider the $\ModelPaifull$-model $\pair{\pair{\mathscr{P}(\mathbf{W}),\Box_R},\pair{T_@,\oplus_@},N,v^E,t_@}$, with $v^E(p)=v(p)$ for all $p\in Var$. It holds that $\M,@\Vdash\phi\Leftrightarrow @\in v^E(\phi)$.
\end{lemma}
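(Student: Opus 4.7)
The plan is to prove the biconditional by induction on the complexity of $\phi$, with the inductive claim strengthened to hold at every $w \in R[@]$ rather than only at the root: the $\Box$ clause forces reasoning at successor worlds, and the weak claim at $@$ alone cannot close the induction. Before starting, I would invoke the generated submodel lemma \ref{lem.: generated_submodel_satisfiability} together with lemma \ref{lem: Fine_surjective_models} to reduce to the case where $W = R[@]$ and each $t_w$ is surjective; neither reduction affects satisfaction at $@$.

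The base case $\phi = p$ is immediate from $v^E(p) = v(p)$. The Boolean cases $\neg\alpha$ and $\alpha \lor \beta$ are settled at once by the fact that the algebra of extensional values is the full complex Boolean algebra $\mathscr{P}(\mathbf{W})$: negation is complement and disjunction is union, so the inductive hypothesis transports pointwise. For $\Box\alpha$, the definition $\Box_R X = \{x \in W : R[x] \subseteq X\}$ matches Fine's universal quantification over successors, and transitivity of $R$ ensures $R[w] \subseteq R[@] = W$, so the IH applied to $\alpha$ at each $w' \in R[w]$ closes the case.

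The essential case is $\phi = \alpha \to \beta$, which I would split on whether the content condition $t_@(N(\beta)) \leq_{\oplus_@} t_@(N(\alpha))$ holds in the root's content semilattice. In the positive case, persistence of content inclusion, extended from variables to all formulae as remarked after Definition \ref{def.: Fine_PAI_model}, uniformly propagates the inclusion to every $w \in R[@]$. Thus Fine's content clause is automatic at each such $w$, and $\M, w \Vdash \alpha \to \beta$ collapses to $\forall w' \in R[w], \M, w' \Vdash \alpha \supset \beta$, matching $w \in \Box_R v^E(\neg\alpha \lor \beta) = v^E(\alpha \to \beta)$ via the IH on $\alpha$ and $\beta$ at each successor.

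The main obstacle is the negative case: when $t_@(N(\beta)) \not\leq_{\oplus_@} t_@(N(\alpha))$, one has $v^E(\alpha \to \beta) = \emptyset$, and the strengthened IH demands $\M, w \not\Vdash \alpha \to \beta$ for \emph{every} $w \in R[@]$, not only at the root (where the failure of the content condition at $@$ yields it directly). This is delicate because persistence pushes content inclusions forwards along $R$ but does not automatically propagate their failure; one must in addition exploit the $\mathbf{S4}$-structure of the frame, the surjectivity of the $t_w$'s, and if need be prepare $\M$ further so that the topic assignments at accessible worlds are coherently induced from $\pair{T_@, \oplus_@}$ (for instance by passing to a bisimilar model whose content information is uniformly determined by the root). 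This is the step on which the technical work of the proof concentrates.
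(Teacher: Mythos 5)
Your overall architecture (induction, reduction via Lemmas \ref{lem.: generated_submodel_satisfiability} and \ref{lem: Fine_surjective_models}, treatment of the Boolean and positive-arrow cases) matches the paper's, but there is a genuine gap at exactly the point you flag: the strengthened pointwise induction hypothesis ``for every $w\in R[@]$, $\M,w\Vdash\phi\Leftrightarrow w\in v^E(\phi)$'' is \emph{false} in general, not merely hard to prove. In the negative arrow case $v^E(\alpha\to\beta)=\emptyset$, yet persistence only pushes content inclusions forward along $R$; nothing prevents a \emph{new} inclusion $t_w(\beta)\leq_{\oplus_w}t_w(\alpha)$ from appearing at a successor $w$ where it fails at $@$, in which case $\M,w\Vdash\alpha\to\beta$ while $w\notin v^E(\alpha\to\beta)$. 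Your proposed repair --- passing to a bisimilar model whose content data is induced from the root --- cannot work either: erasing such an inclusion at $w$ would falsify $\alpha\to\beta$ there and hence could change the truth value of $\Diamond(\alpha\to\beta)$ at $@$ itself, so the preparation is not satisfaction-preserving at the root.

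The paper's proof avoids this by proving a strictly weaker auxiliary claim, namely the equivalence of the two \emph{universally quantified} statements $R[@]\subseteq v^E(\alpha)$ and $\forall s\in R[@],\,\M,s\Vdash\alpha$, which is all the $\Box$ case of the main induction requires. For arrows this equivalence goes through because reflexivity puts $@\in R[@]$: if the content condition fails at $@$ then \emph{both} sides are false (the failure is witnessed at the root itself, so one does not need it to fail at every successor), and if it holds at $@$ then persistence propagates it to all of $R[@]$, reducing both sides to the corresponding claim for $\Box(\alpha\supset\beta)$. You should replace your pointwise strengthening with this quantified formulation; with that change the rest of your argument goes through essentially as the paper's does.
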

\begin{proof}
    We proceed by induction over the complexity of formulae. The base case is granted by the definition of $v^E$, while the Boolean cases are easy. When it comes to box, we are going to use the following property: $R[@]\subseteq v^E(\alpha)\Leftrightarrow\forall s\in R[@],\M,s\Vdash\alpha$. Again, we perform an induction over $\alpha$: for $p\in Var$, $\forall s\in R[@],\M,s\Vdash p$ iff, by definition of $v^E$, $\forall s\in R[@], s\in v^E(p)$ iff $R[@]\subseteq v^E(p)$. 
    
    The only non-trivial cases are those for box and arrow. Let $\alpha:=\Box\beta$ and observe that $\forall s\in R[@],\M,s\Vdash\Box\alpha$ iff $\forall s\in R[@],\M,s\Vdash\alpha$: the only if direction holds in virtue of (T), while for the if direction $\forall s\in R[@],\M,s\Vdash\alpha$ amounts to $\M,@\Vdash\Box\alpha$, then by (4) $\M,@\Vdash\Box\Box\alpha$, which implies $\forall s\in R[@],\M,s\Vdash\Box\alpha$. Now $\forall s\in R[@],\M,s\Vdash\alpha$ iff, by induction hypothesis, $R[@]\subseteq v^E(\alpha)$ iff $@\in \Box_R v^E(\phi)=\Box_R\Box_R v^E(\phi)=\Box_R v^E(\Box\phi)$, where the former identity holds by (EqT) and (Eq4), iff $R[@]\subseteq v^E(\Box\phi)$.

    Let $\alpha:=\beta\to\gamma$. $\forall s\in R[@],\M,s\Vdash\beta\to\gamma$ iff $\forall s\in R[@],\M,s\Vdash\Box(\beta\supset\gamma)$ and $t_s(\gamma)\leq_{\oplus_s}t_s(\beta)$. By the same reasoning above, $\forall s\in R[@],\M,s\Vdash\Box(\beta\supset\gamma)$ iff $\forall s\in R[@],\M,s\Vdash\beta\supset\gamma$, moreover by reflexivity $@\in R[@]$, so in particular $t_@(\gamma)\leq_{\oplus_@}t_@(\beta)$, and also the other direction holds, since by definition \ref{def.: Fine_PAI_model} $t_@(\gamma)\leq_{\oplus_@}t_@(\beta)$ and $s\in R[@]$ imply $t_s(\gamma)\leq_{\oplus_s}t_s(\beta)$. $\forall s\in R[@],\M,s\Vdash\beta\supset\gamma$ iff, by induction hypothesis, $R[@]\subseteq v^E(\beta\supset\gamma)$ iff $@\in \Box_R v^E(\beta\supset\gamma)=\Box_R v^E(\Box(\beta\supset\gamma))$ iff $R[@]\subseteq v^E(\Box(\beta\supset\gamma))=v^E(\beta\to\gamma)$, where the latter identity holds in virtue of $t_@(\gamma)\leq_{\oplus_@}t_@(\beta)$.

    Returning to the main claim, assume $\phi:=\Box\alpha$. $\M,@\Vdash\Box\alpha$ iff $\forall s\in R[@],\M,s\Vdash\alpha$ iff, by the property just proved, $R[@]\subseteq v^E(\alpha)$ iff $@\in \Box_R v^E(\phi)=v^E(\Box\phi)$.

    For the last case, let $\phi:=\alpha\to\beta$. $\M,@\Vdash\alpha\to\beta$ iff $\M,@\Vdash\Box(\alpha\supset\beta)$ and $t_@(\beta)\leq_{\oplus_@}t_@(\alpha)$ iff, by the previous cases, $@\in v^E(\Box(\alpha\supset\beta))$ and $t_@(\beta)\leq_{\oplus_@}t_@(\alpha)$ iff $@\in v^E(\alpha\to\beta)$.
\end{proof}

\begin{theorem}\label{th.: paioriginal_order_completeness}
    $\Paioriginal$ is the logic of order of the class of $\ModelPaifull$-models.
\end{theorem}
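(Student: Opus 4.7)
My plan is to establish the two inclusions between $\vdash_{\Paioriginal}$ and $\vDash^\leq_{\ModelPaifull}$ separately, pivoting through Fine's Kripke completeness for $\Paioriginal$ and using Lemma \ref{lem.: paioriginal_main_correspondence} as the bridge between Kripke semantics and the generalized Epstein semantics.

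For soundness, I would proceed by induction on derivations in $\Paioriginal$. Since $\Paioriginal$ and $\Paifull$ have the same set of theorems --- they share the same axiomatic base and differ only on the necessitation rule (local vs.\ global) --- Theorem \ref{th.: paifull_completeness} immediately gives that every axiom of $\Paioriginal$ evaluates to the top element of the extensional algebra in any $\ModelPaifull$-model, so the order-preservation condition holds vacuously for axioms. Modus ponens preserves order in any Boolean algebra, since $v_A(\phi) \land v_A(\phi \supset \psi) \leq v_A(\psi)$. For local necessitation the rule is applied only to theorems: if $\vdash_{\Paioriginal} \phi$ then $v_A(\phi) = 1$ in every $\ModelPaifull$-model, whence $v_A(\Box\phi) = \Box 1 = 1$ by the interior algebra equation (EqK1), and order-preservation is again trivial. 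Crucially, the global rule $\phi \vdash \Box\phi$ is \emph{not} validated by this argument, which is exactly what the statement requires.

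For completeness, I would suppose $\Gamma \nvdash_{\Paioriginal} \phi$ and invoke Fine's Kripke completeness to obtain a $\ModelPaioriginal$-model $\M$ with a world $w_0$ satisfying every $\gamma \in \Gamma$ but not $\phi$. By Lemma \ref{lem.: generated_submodel_satisfiability} I can pass to the generated submodel rooted at $w_0$ without disturbing satisfaction there, and by Lemma \ref{lem: Fine_surjective_models} I may further assume all topic assignments to be surjective. Applying Lemma \ref{lem.: paioriginal_main_correspondence} then converts this rooted Kripke model into a $\ModelPaifull$-model whose extensional algebra is the complex interior algebra $\pair{\mathscr{P}(\mathbf{W}),\Box_R}$, with $w_0 \in v^E(\gamma)$ for every $\gamma \in \Gamma$ and $w_0 \notin v^E(\phi)$. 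Taking the singleton $x := \{w_0\}$ as order-witness, we have $x \subseteq v^E(\gamma)$ for every $\gamma \in \Gamma$ while $x \nsubseteq v^E(\phi)$, witnessing $\Gamma \nvDash^\leq_{\ModelPaifull} \phi$.

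The main conceptual work has already been packaged into Lemma \ref{lem.: paioriginal_main_correspondence}; once that is available, the proof essentially amounts to dressing up a Kripke countermodel as a $gE$-model and choosing a singleton-sized witness in the complex algebra. The subtle conceptual point, and the reason the theorem deserves its own statement, is that the passage from $1$-assertional to order-preserving consequence is precisely what blocks global necessitation and lets us recover Parry's original local logic rather than its global companion $\Paifull$.
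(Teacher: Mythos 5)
Your proposal is correct and follows essentially the same route as the paper: contraposition via Fine's Kripke completeness, passage to the generated submodel at the refuting world with surjective topic assignments, conversion to a $\ModelPaifull$-model over the complex interior algebra via Lemma \ref{lem.: paioriginal_main_correspondence}, and the singleton of the root as the order-witness. Your soundness argument is more explicit than the paper's (which only remarks that soundness is routine), but the substance is identical.
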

\begin{proof}
    The claim amounts to $\Gamma\vdash_\Paioriginal\phi\Leftrightarrow\Gamma\vDash_{\ModelPaifull}^\leq\phi$. Soundness is easy to check. Adequacy will be proved by contraposition, by proving that every Fine's $\ModelPaioriginal$-countermodel generate a respective $\ModelPaifull$-countermodel. Let $\Gamma\nvdash_\Paioriginal\phi$. By Fine's completeness of $\Paioriginal$ w.r.t. to its Kripke semantics (\cite{Fine86_analytic_implication}), there is a $\ModelPaioriginal$-countermodel $\M=\pair{W,R,\pair{T_w,\oplus_w}_{w\in W},v,\{t_w\}_{w\in W}}$ s.t. for some $@\in W, @\Vdash \gamma$ for every $\gamma\in\Gamma$, but $@\nVdash\phi$. Consider the submodel generated by the single point $@$, that is the rooted submodel of $\M$ with set of possible worlds $R[@]$. Let us build its corresponding $\ModelPaifull$-model $\pair{\pair{\mathscr{P}(\mathbf{R}[@]),\Box_{R\cap R[@]^2}},\pair{T_@,\oplus_@},N,v^E,t_@}$ described in lemma \ref{lem.: paioriginal_main_correspondence}. By the lemma, we know that $@\in v^E(\gamma)$ for all $\gamma\in\Gamma$, but $@\notin v^E(\phi)$. Therefore there is a point $x$, namely $\{ @ \}$, s.t. $x\leq v^E(\gamma)$ for each $\gamma\in\Gamma$ and $x\nleq v^E(\phi)$, that is $\Gamma\nvDash_{\ModelPaifull}^\leq\phi$.
\end{proof}

Finally, we prove that by weakening (Nec$_g$) for (Nec) in $\Paifull$ we obtain an equivalent complete calculus for Parry's logic. Let $\LocalPaifull=\pair{\lang_\mathbf{ML}^\to,\vdash_{\LocalPaifull}}$ be said logic.

\begin{theorem}
    $\LocalPaifull$ and $\Paioriginal$ are the same logic\footnote{See footnote \footref{footnote: Fine_language} for the considerations about the languages of the two different logics.}.
\end{theorem}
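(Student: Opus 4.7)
The plan is to establish $\LocalPaifull = \Paioriginal$ by showing that the two calculi have exactly the same set of theorems and share exactly the same rules of inference, from which coincidence of their consequence relations follows. No canonical-model construction specific to $\LocalPaifull$ is needed, since all the heavy semantic work has already been carried out either in Fine's original completeness theorem or in Theorem~\ref{th.: paioriginal_order_completeness}.

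The core ingredient is the standard modal-logic observation that a normal modal logic and its global companion produce exactly the same theorems: inside a derivation from the empty premise set every intermediate formula is itself derived from the empty premise set, so any step of $(\mathrm{Nec}_g)$ is in fact an instance of $(\mathrm{Nec})$. Applied to the pair $(\LocalPaifull, \Paifull)$ this gives $\mathrm{Thm}(\LocalPaifull) = \mathrm{Thm}(\Paifull)$, and applied to $(\Paioriginal, \FineGlobalPai)$ it gives $\mathrm{Thm}(\Paioriginal) = \mathrm{Thm}(\FineGlobalPai)$. Composing these with Theorem~\ref{th.: uguaglianza_pai_e_fine}, which identifies $\Paifull$ with $\FineGlobalPai$, yields the coincidence $\mathrm{Thm}(\LocalPaifull) = \mathrm{Thm}(\Paioriginal)$.

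To promote coincidence of theorems to coincidence of consequence relations, I would invoke the standard characterization of local modal consequence: $\Gamma$ derives $\phi$ in a local normal modal calculus iff $\phi$ can be derived from $\Gamma$ together with the set of theorems using only $(\mathrm{MP})$. Since both $\LocalPaifull$ and $\Paioriginal$ employ $(\mathrm{MP})$ and local $(\mathrm{Nec})$ as their only rules, and their theorem sets coincide, their consequence relations coincide. Equivalently, each Fine axiom of $\Paioriginal$ is a theorem of $\LocalPaifull$ and each axiom of the present paper's presentation of $\LocalPaifull$ is a theorem of $\Paioriginal$, so any derivation in one system can be transcribed into the other by replacing an invocation of an axiom with its derivation as a theorem.

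The main delicate step is the theorem-coincidence lemma for local versus global modal logics. It is standard, but it deserves explicit justification given the careful local/global distinction drawn throughout the paper: on non-empty premise sets the two consequence relations do genuinely differ, and the collapse occurs precisely when the premise set is empty. Alternatively, one could obtain $\LocalPaifull \subseteq \Paioriginal$ semantically, by verifying soundness of $\LocalPaifull$ with respect to the logic of order of $\ModelPaifull$-models (the key point being that local $(\mathrm{Nec})$, unlike $(\mathrm{Nec}_g)$, is sound for $\vDash^{\leq}$ because $\Box 1 = 1$ in every interior algebra) and then applying Theorem~\ref{th.: paioriginal_order_completeness}; the reverse inclusion would still have to go through the same syntactic chain of theorem coincidences, since an independent completeness theorem for $\LocalPaifull$ is not available.
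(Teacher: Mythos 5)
Your argument is correct, but it takes a genuinely different route from the paper's. The paper splits the two inclusions asymmetrically: $\LocalPaifull\subseteq\Paioriginal$ is obtained semantically (soundness of $\LocalPaifull$ over $\ModelPaioriginal$-frames plus Fine's completeness), while $\Paioriginal\subseteq\LocalPaifull$ is established purely syntactically, by deriving each of Fine's axioms (2)--(6) in $\LocalPaifull$ --- including a two-stage induction on subformulae to recover the variable-inclusion axiom (4) from (O1)--(O5), (C1)--(C2) and (A5). You instead reduce everything to the folklore fact that a normal calculus and its global companion prove the same theorems, chain this through Theorem~\ref{th.: uguaglianza_pai_e_fine}, and then lift theorem-coincidence to coincidence of consequence relations via the ``$\Gamma$ plus theorems, detachment only'' characterization of local consequence. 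This is shorter and outsources the hard verifications to Theorem~\ref{th.: uguaglianza_pai_e_fine} (where they reappear as soundness checks of Fine's axioms over $\ModelPaifull$-models); but it loses something the paper explicitly wants, namely the demonstration that the schema-of-schemata (4) is \emph{derivable} from finitely many ordinary schemes --- the remark immediately after the theorem trades on exactly that derivation. Two points you should make explicit: (i) the primitive detachment rules of the two calculi differ ($\phi,\phi\supset\psi\vdash\psi$ in $\LocalPaifull$ versus $\phi,\phi\to\psi\vdash\psi$ in Fine's presentation of $\Paioriginal$), so the final step needs the observation that each form of \emph{modus ponens} is a derived rule of the other system, valid for arbitrary premise sets; (ii) your semantic alternative for $\LocalPaifull\subseteq\Paioriginal$ via Theorem~\ref{th.: paioriginal_order_completeness} is fine, and the reason you give for the soundness of local (Nec) under $\vDash^{\leq}$ (that $\Box 1=1$ in interior algebras) is exactly the right one.
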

\begin{proof}
    That $\LocalPaifull\subseteq\Paioriginal$ follows from checking the soundness of $\LocalPaifull$ w.r.t. $\ModelPaioriginal$-frames and Fine's completeness theorem. We prove the other direction syntactically (we omit the subscript $\LocalPaifull$ under the turnstile for the remaining of the proof). $\LocalPaifull$ clearly derives all of $\Sfourlocal$ axioms and rules. Axioms (2) and (6) are shared between the logics. (5) is an application of (O5).
     
    For axiom (3), first observe that the deduction theorem for standard modal logic holds for $\Sfourlocal$ as well, since its rules are exactly the same of \textbf{K} and it contains classical theorems. By (C2) $(\phi\prec\chi)\land(\psi\prec\chi)\vdash(\phi\lor\psi)\prec\chi$; by (O4) $\vdash\neg\phi\prec\phi$ and by (C1) $\vdash\phi\prec(\phi\lor\psi)$, so using (O2) $\vdash\neg\phi\prec(\phi\lor\psi)$. Similarly we obtain $\vdash\neg\psi\prec(\phi\lor\psi)$, so by (C2) $\vdash(\neg\phi\lor\neg\psi)\prec(\phi\lor\psi)$. Again, by (O4) $\vdash\neg(\neg\phi\lor\neg\psi)\prec(\neg\phi\lor\neg\psi)$ and (C2) gives $\vdash\neg(\neg\phi\lor\neg\psi)\prec(\phi\lor\psi)$. Using this and (O2) we obtain $(\phi\prec\chi)\land(\psi\prec\chi)\vdash\neg(\neg\phi\lor\neg\psi)\prec\chi$, and by definition $\neg(\neg\phi\lor\neg\psi)=:\phi\land\psi$. By deduction theorem we conclude $\vdash((\phi\prec\chi)\land(\psi\prec\chi))\supset(\phi\land\psi)\prec\chi$.

    Axiom (4) is proved in three steps. Let $Var(\phi)\subseteq Var(\psi)$. Take any $p\in Var(\phi)$. By (O1) $\vdash p\prec p$. Let us denote by $Sub(\chi)$ the set of subformulae of $\chi$. For the second step, we prove by induction on $\beta\in Sub(\psi)$ s.t. $p\in Var(\beta)$ that $p\prec\beta$. If $\beta$ is a variable, then $\beta=p$ and the base case is done. If $\beta:=\neg\gamma$, then by induction hypothesis $\vdash p\prec\gamma$, so by (O4) $\vdash p\prec\neg\gamma$. The case is similar for $\beta:=\Box\gamma$, using (O5) instead. If $\beta:=\gamma_1\lor\gamma_2$, by induction hypothesis $\vdash p\prec\gamma_1$, hence by (C1) $\vdash p\prec\gamma_1\lor\gamma_2$. For $\beta:=\gamma_1\to\gamma_2$ we make the same reasoning, using (A5) to prove from $\vdash p\prec\gamma_1\lor\gamma_2$ that $\vdash p\prec\gamma_1\to\gamma_2$. This concludes the induction. Since, in particular, $\psi\in Sub(\psi)$, then $p\prec \psi$. For the last step, we prove that for all $\alpha\in Sub(\phi), \vdash\alpha\prec\psi$. We induce on the complexity of $\alpha$. The base case is provided by $\vdash p\prec\psi$. The cases for negation and box are similar to those in the previous induction. If $\alpha:=\gamma_1\lor\gamma_2$, by induction hypothesis $\vdash\gamma_1\prec\psi$ and $\vdash\gamma_2\prec\psi$, and using (C2) $\vdash(\gamma_1\lor\gamma_2)\prec\psi$. The case $\alpha:=\gamma_1\to\gamma_2$ uses the same reasoning, concluding by (A5) that $\vdash(\gamma_1\to\gamma_2)\prec\psi$. This proves $\phi\prec\psi$, therefore axiom (4) is derivable\footnote{That $\LocalPaifull$ coincides with original Parry's logic is obtained using \cite{Fine86_analytic_implication}'s proof of the equivalence between his formulation of $\Paioriginal$ and Parry's one.}.
\end{proof}

\n The reason why we mentioned $\LocalPaifull$ is that it is a proper finite axiomatic presentation of Parry's logic, devoid of axioms like (4) which is, strictly speaking, a schema of schemata.

As a concluding remark, we sketch a proof of the claim that $\FergusonPaiBox$ is the logic of order of $\ModelPaizero$-models. Recall that $\FergusonPaiBox$ is the expansion of Ferguson's logic $\FergusonPai$ with $\Box$ and the addition of axioms and the rule for (local) $\Sfourlocal$. We can replicate the proof that led to theorem \ref{th.: paioriginal_order_completeness}. Various minor details have to be fixed, starting from the definition of a $\ModelFergusonPai$-model we define a $\ModelFergusonPaiBox$-model for $\lang_\mathbf{ML}^\to$ as in definition \ref{def.: ferguson_ca_pai_model} adding the standard clause for box. The notion of generated submodel is immediately adapted from \ref{def.: generated_submodel}. Then we repeat the proofs of lemmas \ref{lem.: generated_submodel_satisfiability}-\ref{lem: Fine_surjective_models}-\ref{lem.: paioriginal_main_correspondence}; notice that Ferguson's models contain the family of homomorphisms $\{h_{ww'}:T_w\to T_{w'}\}_{w,w,'\in W}$, whose only role is to guarantee the persistence of topic inclusion throughout the successors of each point, therefore that property is not lost in the current iteration of the proofs. In lemma \ref{lem.: paioriginal_main_correspondence} the only difference in the corresponding $\ModelPaizero$-model is that the algebra of contents will be $\pair{T_@,\oplus_@,\gru_@}$. Completeness follows as in theorem \ref{th.: paioriginal_order_completeness}.

\begin{proposition}\label{th.: paioriginal_order_completeness}
    $\FergusonPaiBox$ is the logic of order of the class of $\ModelPaizero$-models.
\end{proposition}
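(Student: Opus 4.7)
The plan is to mimic the argument used to establish Theorem \ref{th.: paioriginal_order_completeness} (in the $\Paifull$ case), adapting each ingredient to the agnostic setting in which the intensional groupoid operation $\gru$ is no longer collapsed onto $\oplus$. Soundness is a routine check: the interior-algebra axioms and those governing $\prec$ transfer immediately from the semantic clauses of a $\ModelPaizero$-model, and \emph{modus ponens} together with local necessitation preserve the ordering, as is standard for the logic of order of a class of ordered algebras.

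For adequacy I proceed by contraposition. Assuming $\Gamma \nvdash_{\FergusonPaiBox} \phi$, I invoke the box-expanded Kripke-style completeness of $\FergusonPaiBox$ with respect to $\ModelFergusonPaiBox$-frames (the straightforward adaptation of \cite{Ferguson23_subject_matter1}'s completeness theorem, noted in the paragraph preceding the statement) to produce a Ferguson-style countermodel $\M$ with a world $@$ that forces every $\gamma \in \Gamma$ yet refutes $\phi$. Passing to the submodel generated by $@$ and normalising via a surjectivity reduction on each $t_w$—both obtained by direct transcriptions of Definition \ref{def.: generated_submodel} and Lemma \ref{lem: Fine_surjective_models}, with the homomorphisms $h_{w,w'}$ preserved as before—I obtain a rooted $\ModelFergusonPaiBox$-model whose topic maps are surjective and for which Lemma \ref{lem.: generated_submodel_satisfiability} continues to hold.

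From this rooted model I construct the corresponding $\ModelPaizero$-model $\langle\langle\mathscr{P}(\mathbf{R[@]}),\Box_{R\cap R[@]^2}\rangle,\pair{T_@,\oplus_@,\gru_@},N,v^E,t_@\rangle$, taking the full complex algebra of the underlying $\mathbf{S4}$-frame as the algebra of extensional values, and the content algebra $\pair{T_@,\oplus_@,\gru_@}$ at the root as the algebra of intensional values (this is the decisive point at which the construction diverges from the $\Paifull$ case). Setting $v^E(p):=v(p)$ one then proves the analogue of Lemma \ref{lem.: paioriginal_main_correspondence}: $\M,@\Vdash\phi \Leftrightarrow @\in v^E(\phi)$, by induction on $\phi$. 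The Boolean and box cases are exactly as in the $\Paifull$ case; for the arrow case, unfolding Ferguson's clause yields $\M,@\Vdash\alpha\to\beta$ iff $\M,@\Vdash\Box(\alpha\supset\beta)$ and $t_@(\beta)\leq_{\oplus_@}t_@(\alpha)$, and both conjuncts match exactly the definition of $v^E(\alpha\to\beta)$ in the constructed $\ModelPaizero$-model via the content assignment $t_@$, since $N(\alpha\to\beta)=N(\alpha)\gru N(\beta)$ and the content inequality is read off from $\pair{T_@,\oplus_@}$.

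Once the main correspondence is in place, the point $x=\{@\}$ of the Boolean algebra $\mathscr{P}(\mathbf{R[@]})$ witnesses $x\leq v^E(\gamma)$ for each $\gamma\in\Gamma$ while $x\nleq v^E(\phi)$, giving $\Gamma\nvDash^\leq_{\ModelPaizero}\phi$. The main obstacle I anticipate is verifying the arrow case of the correspondence lemma: because $\gru_@$ is genuinely free (no compatibility with $\oplus_@$ is assumed), I must check that the constructed model is a \emph{bona fide} $\ModelPaizero$-model in which the two cases of the valuation clause for $\to$ reproduce exactly what Ferguson's Kripke semantics prescribes, exploiting the persistence of topic inclusion guaranteed by the homomorphisms $h_{@,w}$ precisely when moving from $@$ to its successors inside the generated submodel.
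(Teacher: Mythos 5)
Your proposal is correct and follows essentially the same route as the paper's own (sketched) proof: replicate the argument for $\Paioriginal$ and $\ModelPaifull$-models, adapting the generated-submodel, surjectivity, and main-correspondence lemmas to Ferguson's frames, with the only substantive change being that the algebra of contents at the root becomes $\pair{T_@,\oplus_@,\gru_@}$. You also correctly identify the two points the paper flags---that the homomorphisms $h_{w,w'}$ secure persistence of topic inclusion, and that the arrow clause still only reads off the $\leq_{\oplus_@}$ comparison even though $N(\alpha\to\beta)=N(\alpha)\gru N(\beta)$---so nothing further is needed.
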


\end{section}

\begin{section}{Logics of demodalized analytic implication}\label{sec: demodalized_logic}

In this section we will recover set-assignment semantics, showing how Epstein's dependence logics can be provided with an equivalent model theory within the new framework of generalized Epstein semantics. We start by focusing on Epstein's logic $\EpsteinD$, which is a reformulation of Dunn's logic of demodalyzed analytic implication $\DunnDai$ (\cite{Dunn72_demodalized_implication}). $\DunnDai$ is obtained by demodalizing $\Paioriginal$ via the axiom $\phi\to(\neg\phi\to\phi)$, which collapses the underlying modal logic into classical one, turning formulae containing boxes into graphical variants of the propositional ones. A good candidate for recovering this logic within generalized Epstein semantics is to demodalize $\Paifull$, using the axiom:
\begin{itemize}
    \item[(A6)] $\phi\supset\Box\phi$
\end{itemize}

\n We obtain $\Daifull^\Box=\pair{\lang_\mathbf{ML}^\to,\vdash_{\Daifull^\Box}}=\Paifull$+(A6). Using the definition of $\to$ provided by (A4), (T) and box transparency given by (O5), we derive $\vdash_{\Daifull^\Box}(\phi\to\Box\phi)\land(\Box\phi\to\phi)$. In this system any formula $\phi$ is logically equivalent to $\Box^n\phi,\forall n\in\mathbb{N}$ (i.e. $\phi$ prefixed by $n$ boxes), or, even more, $\phi$ is equivalent to $\phi^\Box$, where $\phi^\Box$ differs from $\phi$ at most for having some subformula $\Box^n\psi$ substituted for $\psi$ belonging to $\phi$. Following this path, it is straightforward to prove completeness of $\Daifull^\Box$ w.r.t. the class of $\ModelPaifull$-models in which the algebra of truth-values is an interior algebra where $\Box$ is the identity function. In this sense, the semantics for this logic is composed by structures which contain no additional information compared to standard Boolean algebras. Therefore we are going to follow a different approach, simplifying the language.

Let us return to the language $\lang_\mathbf{CL}^\to=\pair{\neg,\lor,\to}$ of classical logic extended with an intensional arrow operator. The logic $\Daifull=\pair{\lang_\mathbf{CL}^\to,\vdash_{\Daifull}}$ is given by the following Hilbert-style calculus:
\begin{itemize}
    \item[] \textbf{Axioms}
    \item[(A1)] $\phi\supset(\psi\supset\phi)$
    \item[(A2)] $(\phi\supset(\psi\supset\chi))\supset((\phi\supset\psi)\supset(\phi\supset\chi))$
    \item[(A3)] $(\neg\phi\supset\neg\psi)\supset(\psi\supset\phi)$
    \item[(A4$^D$)] $(\phi\to\psi)\equiv((\phi\supset\psi)\land(\psi\prec\phi))$
    \item[(A5)] $((\phi\to\psi)\prec(\phi\lor\psi))\land((\phi\lor\psi)\prec(\phi\to\psi))$
    \item[(O1)] $\phi\prec\phi$
    \item[(O2)] $((\phi\prec\psi)\land(\psi\prec\chi))\supset(\phi\prec\chi)$
    \item[(O3)] $(\phi\prec\psi)\supset(\phi\lor\psi\prec\psi)$
    \item[(O4)] $((\phi\prec\psi)\equiv(\phi\prec\neg\psi))\land((\phi\prec\psi)\equiv(\neg\phi\prec\psi))$
    \item[(C1)] $(\phi\prec\phi\lor\psi)\land(\psi\prec\phi\lor\psi)$
    \item[(C2)] $((\phi\prec\chi)\land(\psi\prec\chi))\supset((\phi\lor\psi)\prec\chi)$
    \item[(C3)] $((\phi\prec\chi)\land(\chi\prec\phi)\land(\psi\prec\zeta)\land(\zeta\prec\psi))\supset((\phi\to\psi)\prec(\chi\to\zeta))$
    \item[] \textbf{Rule}
    \item[(MP)] $\phi,\phi\supset\psi\vdash\psi$
\end{itemize}

\n $\Daifull$ is obtained by removing from $\Daifull^\Box$ the modal axioms, box transparency and changing (A4$^D$) for (A4), which now defines an analytic implication as material (and no long strict) implication plus content inclusion. Of course, necessitation is dismissed as well.

Since this is a demodalization, it will be sufficient to employ Boolean algebras in the semantics.
\begin{definition}\label{def.: dai-model}
    A $\ModelDaifull$-model is a $gE$-model $\pair{\B,\Se, N,v,s}$ for $\lang_\mathbf{CL}^\to$, which differs from a $\ModelPaifull$-model only for the facts that $\B$ is a Boolean algebra and:
    $$v(\phi\to\psi)=\begin{cases} 
	v(\neg\phi\lor\psi) & \text{if } s( N(\phi))\leq_\oplus s( N(\psi));\\
        0^{\B} & \text{otherwise.}
        \end{cases}$$
    
    \n The remaining parts are adapted from definition \ref{def.: pai-model} for the language $\lang_\mathbf{CL}^\to$.
\end{definition}

The completeness proof w.r.t. to this class of models is an adaptation of theorem $\ref{th.: paifull_completeness}$ for $\Paifull$. Throughout this section, we are going to denote $Fm_{\pair{\neg,\lor,\to}}$ by $Fm$, and we abbreviate $Fm_{\pair{\neg,\lor}}$ and $Fm_c$. For this completeness proof, the difference is only that we quotient the formula algebra for the language $\lang_\mathbf{CL}^\to$. For a prime $\Daifull$-theory $\Gamma$, we use the same congruence $\Omega\Gamma$ to obtain the Lindenbaum-Tarski algebra $\Fm_c\backslash\Omega\Gamma$ as the algebra of truth-values for the canonical model, which is now a Boolean algebra. Similarly we quotient the translation of $Fm_c$ through $N$ by $\sim_\Gamma$ and obtain the semilattice of contents $\langle N[Fm]/$$\sim_\Gamma,\oplus\rangle$. Lemma \ref{lemma: S4_lind_tarski_process} and corollary $\ref{cor.: paifull_order_correspondence}$ can be restated for $\Daifull$. 

\begin{definition}\label{def.: dai_canonical_model}
    Let $\Gamma$ be a prime $\Daifull$-theory. The canonical model for $\Gamma$ is the $\ModelDaifull$-model $\M^\Gamma=\langle\Fm_c/\Omega v_1[\Gamma],\langle N[Fm]/$$\sim_\Gamma,\oplus\rangle, N,v^\Gamma,\pi\rangle$, which is adapted from definition $\ref{def.: paifull_canonical_model}$, changing in $v^\Gamma=v_2\circ v_1$ the component $v_1:Fm\to Fm_c$ only for the case:
        $$v_1(\phi\to\psi)=\begin{cases} 
	v_1(\neg\phi\lor\psi) & \text{if } [ N(\psi)]_{\sim_\Gamma}\leq_\oplus[ N(\phi)]_{\sim_\Gamma};\\
         v_1(\neg\phi\lor\psi)\land\neg v_1(\neg\phi\lor\psi) & \text{otherwise.}
        \end{cases}$$
\end{definition}

The following is proved by a straightforward adaptation of the proof of theorem \ref{th.: paifull_completeness}:

\begin{theorem}\label{th.: daifull_completeness}
    $\Daifull$ is complete w.r.t. the class of $\ModelDaifull$-models.
\end{theorem}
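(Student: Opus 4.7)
The plan is to reproduce the canonical model construction of Theorem~\ref{th.: paifull_completeness}, with the simplifications afforded by the absence of $\Box$ and of the rule (Nec$_g$). Soundness is routine: each axiom is validated by a direct check in $\ModelDaifull$-models, and (MP) is sound because $v$ is a Boolean homomorphism w.r.t.\ the classical connectives.

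For adequacy I proceed contrapositively. Assume $\Gamma \nvdash_{\Daifull} \phi$. Since $\Daifull$ is finitary and a conservative expansion of classical logic, Theorem~\ref{th.: lindenbaum_like_lemma} supplies a prime $\Daifull$-theory $\Delta \supseteq \Gamma$ with $\phi \notin \Delta$. I then assemble the canonical model of Definition~\ref{def.: dai_canonical_model}. First, $\sim_\Delta$ is shown to be a congruence on $N[Fm]$ w.r.t.\ $\oplus$, exactly as in the $\Paifull$-case via (C1) and (C2); no analogue for $\gru$ is required because the semilattice carries only $\oplus$. The analogue of Lemma~\ref{lemma: paifull_partial_order}, and hence the analogue of Corollary~\ref{cor.: paifull_order_correspondence}, then transfers verbatim, its proof invoking only (A5), (O1)--(O3) and (C1)--(C2).

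The next link is a classical-logic variant of Lemma~\ref{lemma: pai0_prime_to_S4_prime}: $v_1[\Delta]$ is a prime classical theory. Closure under classical axioms and (MP) is immediate since $v_1$ is the identity on variables and a homomorphism w.r.t.\ $\neg,\lor$; consistency is preserved by the atom-level argument of the Parry case, where the clash between the content-failure branch of $v_1(\alpha \to \beta)$ and consistency of $\Delta$ is ruled out via (A4$^D$) plus the analogue of Corollary~\ref{cor.: paifull_order_correspondence}; primality transfers because every disjunction $\psi_1 \lor \psi_2 \in Fm_c$ has itself among its $v_1$-preimages. Consequently $\Fm_c/\Omega v_1[\Delta]$ is a Boolean algebra, and the classical analogue of Lemma~\ref{lemma: S4_lind_tarski_process} yields the bridge between membership in $v_1[\Delta]$ and being the top element $1_{v_1[\Delta]}$.

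With this scaffolding one proves $\phi \in \Delta \iff v_1(\phi) \in v_1[\Delta]$ by induction on $\phi$; the atomic, disjunctive and negation cases go as in Lemma~\ref{lemma: pai0_bridge_for_canonical_valuation}. The interesting case $\phi := \alpha \to \beta$ splits: if $[N(\beta)]_{\sim_\Delta} \leq_\oplus [N(\alpha)]_{\sim_\Delta}$, then $v_1(\alpha \to \beta) = v_1(\neg\alpha \lor \beta) \in v_1[\Delta]$ yields $\neg\alpha \lor \beta \in \Delta$, and this combined with $\beta \prec \alpha \in \Delta$ (supplied by the analogue of Corollary~\ref{cor.: paifull_order_correspondence}) delivers $\alpha \to \beta \in \Delta$ through (A4$^D$); otherwise the canonical value is $0_{v_1[\Delta]}$, which by consistency cannot lie in $v_1[\Delta]$, so the branch is vacuous. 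The analogue of Lemma~\ref{lemma: pai0_canonical_valuation} then follows, and one concludes $v^\Delta(\gamma) = 1_{v_1[\Delta]}$ for every $\gamma \in \Gamma$ while $v^\Delta(\phi) \neq 1_{v_1[\Delta]}$, whence $\Gamma \nvDash \phi$ in the class of $\ModelDaifull$-models. I foresee no serious obstacle: dropping $\Box$ removes the subtlest steps of the Parry case --- closure under (Nec$_g$) and the modal sub-case of the bridge lemma --- and (A4$^D$) plays its role more directly than (A4), since no appeal to necessitation is needed to lift $\neg\alpha \lor \beta$ together with the accompanying $\beta \prec \alpha$ to membership of $\alpha \to \beta$ in $\Delta$.
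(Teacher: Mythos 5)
Your proposal is correct and follows exactly the route the paper takes: the paper proves Theorem~\ref{th.: daifull_completeness} precisely as a "straightforward adaptation" of Theorem~\ref{th.: paifull_completeness}, using the canonical model of Definition~\ref{def.: dai_canonical_model}, the Boolean Lindenbaum--Tarski algebra in place of the interior algebra, and the restated analogues of Lemma~\ref{lemma: S4_lind_tarski_process} and Corollary~\ref{cor.: paifull_order_correspondence}. You have simply spelled out the details (correctly, including the use of (A4$^D$) in place of (A4) and the disappearance of the modal and (Nec$_g$) steps) that the paper leaves implicit.
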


Having completeness, it is easy to establish the coincidence between $\Daifull$ and Epstein's $\EpsteinD$, by proving, like it was done for theorem \ref{th.: uguaglianza_pai_e_ferguson}, that the logics are sound w.r.t. each other's complete semantics. 
\begin{theorem}\label{th.: uguaglianza_dai_e_epstein}
    $\Daifull$ and $\EpsteinD$ are the same logic.
\end{theorem}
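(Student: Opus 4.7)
The plan is to imitate the strategy of Theorem \ref{th.: uguaglianza_pai_e_fine}, combining completeness with cross-soundness. On the one hand, $\EpsteinD$ is complete w.r.t. the class of standard dependence models (definition \ref{def.: dependence_model}), a result originally due to Epstein. On the other hand, Theorem \ref{th.: daifull_completeness} gives completeness of $\Daifull$ w.r.t. $\ModelDaifull$-models. Hence it suffices to show that every $\EpsteinD$-theorem is valid in every $\ModelDaifull$-model, and conversely that every $\Daifull$-theorem is valid in every dependence model.

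For the inclusion $\EpsteinD\subseteq\Daifull$, I would check validity of Epstein's axioms in a generic $\ModelDaifull$-model. The Boolean axioms and modus ponens pose no issue since $v$ is a Boolean homomorphism on the $\pair{\neg,\lor}$ fragment. The remaining schemata all turn into order-theoretic assertions in the join-semilattice of contents; these reduce to reflexivity, transitivity, and join-compatibility of $\leq_\oplus$, together with the clause $v(\phi\to\psi)=v(\neg\phi\lor\psi)$ whenever contents line up in the required direction. Having proved validity, completeness of $\Daifull$ gives $\Daifull\vdash\phi$ for every $\EpsteinD$-theorem.

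For the converse $\Daifull\subseteq\EpsteinD$, the cleanest route is to recognise every dependence model as a particular $\ModelDaifull$-model: take $\B=\mathbf{B}_2$, take the semilattice of contents to be the join-semilattice reduct $\pair{\mathscr{P}(U),\cup}$ of the powerset, and let the translation $N$ replace $\neg,\lor,\to$ uniformly by the single operator $\oplus=\cup$, so that the union set-assignment $s$ of definition \ref{def.: dependence_model} factors as a semilattice homomorphism through $N$. Under this identification the clause defining $v(\phi\to\psi)$ in definition \ref{def.: dai-model} coincides with the dependence clause (Epstein's ``$s(\phi)\supseteq s(\psi)$'' being exactly the relevant semilattice order). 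Then soundness of $\Daifull$ over the whole class of $\ModelDaifull$-models specialises to soundness over dependence models, and completeness of $\EpsteinD$ w.r.t. dependence models closes the argument.

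The main obstacle I expect is purely bookkeeping: making the translation $N$ explicit so that it is unambiguous that a union set-assignment in Epstein's sense is the same datum as a semilattice homomorphism $s\circ N$ in the generalized Epstein sense, and that the orientation of the content-inclusion condition in definition \ref{def.: dai-model} is the one genuinely corresponding to Epstein's $\supseteq$. Once this correspondence is spelled out, the verification of each individual axiom is routine and essentially a calculation in the semilattice $\pair{\mathscr{P}(U),\cup}$.
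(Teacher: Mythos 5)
Your approach is exactly the paper's: the proof of Theorem \ref{th.: uguaglianza_dai_e_epstein} consists precisely of combining completeness of $\EpsteinD$ w.r.t. dependence models with completeness of $\Daifull$ w.r.t. $\ModelDaifull$-models (Theorem \ref{th.: daifull_completeness}) and checking that each logic is sound w.r.t. the other's semantics, just as in Theorem \ref{th.: uguaglianza_pai_e_fine}. Your caution about the orientation of the content-inclusion clause is well placed: as printed, Definition \ref{def.: dai-model} requires $s(N(\phi))\leq_\oplus s(N(\psi))$, whereas axiom (A4$^D$) and the canonical model of Definition \ref{def.: dai_canonical_model} demand the opposite direction $s(N(\psi))\leq_\oplus s(N(\phi))$, which is the one genuinely matching Epstein's $s(\phi)\supseteq s(\psi)$.
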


\n Therefore $\ModelDaifull$-models are an alternative equivalent semantics for Epstein's (and Dunn's) logic. Returning to the original motivation for a generalized Epstein semantics, the above result shows how the only key ingredients for dependence logic $\EpsteinD$ are only the Boolean structure for truth-values and the join-semilattice for topics.

By switching the direction of content inclusion in the first clause for $\to$ of definition \ref{def.: dai-model}, we obtain models for Epstein's dual dependence logic \textbf{D} and the logic of equality of content \textbf{Eq}. $gdD$-models are those in which $v(\phi\to\psi)=v(\neg\phi\lor\psi)$ when $s( N(\phi))\geq_\oplus s( N(\psi))$, while in $gEq$-models the requirement becomes $s( N(\phi))\geq_\oplus s( N(\psi))$. These classes of models characterize, respectively, the logics $g\mathbf{dD}$ and $g\mathbf{Eq}$, which are obtained by replacing (A4$^D$) in $\Daifull$ by, respectively, (A4$^{dD}$) $(\phi\to\psi)\equiv((\phi\supset\psi)\land(\phi\prec\psi))$ and (A4$^{Eq}$) $(\phi\to\psi)\equiv((\phi\supset\psi)\land(\phi\prec\psi)\land(\psi\prec\phi))$. Completeness and equivalence of logics can be proved similarly to what has been done above. The following theorem states that the entirety of dependence logics can be recaptured within generalized Epstein semantics.
\begin{theorem}\label{th.: epstein_logics_correspondence}
    Let $\Le\in\{\EpsteinD,\mathbf{dD},\mathbf{Eq}\}$. $g\Le$ is complete w.r.t. the class of $gL$-models. $\Le$ and $g\Le$ are the same logic.
\end{theorem}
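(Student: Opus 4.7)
The plan is to follow the strategy already used for $\Daifull$ in Theorem \ref{th.: daifull_completeness} and Theorem \ref{th.: uguaglianza_dai_e_epstein}, since the three logics in the statement differ only in the axiom governing the intensional arrow (respectively (A4$^D$), (A4$^{dD}$), (A4$^{Eq}$)) and correspondingly in the direction of content inclusion imposed by the semantic condition $C_L$ of a $gL$-model. The $\prec$-related machinery (axioms (O1)--(O4), (C1)--(C3) and the construction of the join-semilattice of contents) is common to all three systems, so it is essentially a matter of transporting the $\Daifull$ argument through the new clause.

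First I would check soundness of $g\Le$ with respect to the corresponding class of $gL$-models: this is a routine verification, since the only axiom whose justification depends on $\Le$ is (A4$^{dD}$), respectively (A4$^{Eq}$), and this is read off directly from the modified condition $C_L$. All the remaining axioms involve only $\prec$ and $\supset$ and are validated exactly as in the $\Daifull$ case.

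For completeness I would reuse the Lindenbaum-Tarski construction of Definition \ref{def.: dai_canonical_model} verbatim, replacing the clause for $v_1(\phi\to\psi)$ by the one dictated by $C_L$: for $g\mathbf{dD}$, set $v_1(\phi\to\psi)=v_1(\neg\phi\lor\psi)$ exactly when $[N(\phi)]_{\sim_\Gamma}\leq_\oplus[N(\psi)]_{\sim_\Gamma}$; for $g\mathbf{Eq}$, exactly when $[N(\phi)]_{\sim_\Gamma}=[N(\psi)]_{\sim_\Gamma}$; and in all other cases force the value $v_1(\neg\phi\lor\psi)\land\neg v_1(\neg\phi\lor\psi)$. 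The congruence $\sim_\Gamma$ on $N[Fm]$ and the semilattice $\langle N[Fm]/\sim_\Gamma,\oplus\rangle$ are built exactly as in Lemma \ref{lemma: paifull_partial_order}, whose proof only requires (C1), (C2), (O2), (O3) and (A5), all of which remain present. The bridge lemma analogous to Lemma \ref{lemma: pai0_bridge_for_canonical_valuation} is then proved by induction on complexity; the only nontrivial step is $\phi:=\alpha\to\beta$, where (A4$^{dD}$) or (A4$^{Eq}$) combined with Corollary \ref{cor.: paifull_order_correspondence} (applied in both directions in the $g\mathbf{Eq}$ case) is used to extract $\alpha\to\beta\in\Gamma$ from $v_1(\alpha\to\beta)\in v_1[\Gamma]$. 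Completeness then follows via Theorem \ref{th.: lindenbaum_like_lemma} in the same shape as Theorem \ref{th.: paifull_completeness}.

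Finally, the equivalence $\Le=g\Le$ is obtained by the cross-soundness argument already used in Theorem \ref{th.: uguaglianza_dai_e_epstein}: standard $\Le$-models are a special case of $gL$-models (take the two-element Boolean algebra for extensional values and the powerset algebra $\pair{\mathscr{P}(U),\cup}$ as algebra of contents), so soundness of $\Le$ with respect to $gL$-models is immediate; combined with the completeness of $g\Le$ just established and Epstein's completeness of $\Le$ with respect to its set-assignment semantics, both inclusions between the two consequence relations follow. The most delicate point, and the main obstacle, will be the $g\mathbf{Eq}$ case, where one must verify explicitly that the canonical condition $[N(\phi)]_{\sim_\Gamma}=[N(\psi)]_{\sim_\Gamma}$ reduces to the conjunction $\phi\prec\psi,\psi\prec\phi\in\Gamma$; this is essentially a two-sided application of Corollary \ref{cor.: paifull_order_correspondence} together with the definition of $\sim_\Gamma$, but it must be stated carefully in order to line up exactly with (A4$^{Eq}$) inside the inductive step of the bridge lemma.
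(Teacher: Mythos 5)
Your proposal follows exactly the route the paper itself takes for this theorem: the paper gives no separate proof, saying only that ``completeness and equivalence of logics can be proved similarly to what has been done above,'' i.e.\ by transporting the $\Daifull$ arguments (Definition \ref{def.: dai_canonical_model}, Theorem \ref{th.: daifull_completeness}, Theorem \ref{th.: uguaglianza_dai_e_epstein}) through the modified arrow clause. Your elaboration of the canonical model, and your identification of the one genuinely new point --- the two-sided use of the order correspondence in the $g\mathbf{Eq}$ case so that $[N(\phi)]_{\sim_\Gamma}=[N(\psi)]_{\sim_\Gamma}$ lines up with $\phi\prec\psi,\psi\prec\phi\in\Gamma$ and hence with (A4$^{Eq}$) --- are accurate.

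There is one directional slip in your final paragraph. From the observation that standard $L^+$-models are particular $gL$-models you may conclude that validity over all $gL$-models implies validity over the standard models; combined with soundness of $g\Le$ w.r.t.\ $gL$-models and Epstein's completeness of $\Le$, this gives the inclusion $g\Le\subseteq\Le$. It does \emph{not} give ``soundness of $\Le$ with respect to $gL$-models,'' which is what the converse inclusion $\Le\subseteq g\Le$ requires: since the $gL$-models form a strictly larger class, Epstein's axioms and rules must be verified against them directly rather than read off from the embedding. That verification is routine and is precisely the ``soundness w.r.t.\ each other's complete semantics'' check the paper invokes, so the conclusion stands, but the justification for that half of the equivalence should be replaced by the direct check.
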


As a last consideration, we briefly see how to Ferguson's demodalized conditional-agnostic logic $\FergusonDai$ is recaptured within this semantics. The procedure is a straightforward adaptation, in reverse order, of the one that brought us from $\Paizero$ to $\Paifull$. Since we want the content of the arrow to be as general as possible, we drop axiom (A5), that states its identification with join. The resulting logic is denoted $\Daizero=\pair{\lang_\mathbf{CL}^\to,\vdash_{\Daizero}}$.
\begin{definition}\label{def.: dai0-model}
    A $\ModelDaizero$-model is a $gE$-model $\pair{\B,\Se, N,v,s}$ for $\lang_\mathbf{CL}^\to$, which differs from a $\ModelPaizero$-model only for the facts that $\B$ is a Boolean algebra and:
    $$v(\phi\to\psi)=\begin{cases} 
	v(\neg\phi\lor\psi) & \text{if } s( N(\phi))\leq_\oplus s( N(\psi));\\
        0^{\B} & \text{otherwise.}
        \end{cases}$$
    
    \n The remaining parts are adapted from definition \ref{def.: pai0-model} for the language $\lang_\mathbf{CL}^\to$.
\end{definition}

\n Notice that now $\Se$ is the expanded semilattice containing the operation $\gru$ that composes the topic of formulae containing arrow. The completeness proof is an easy reiteration of the one that brought to theorem \ref{th.: pai0_completeness}.
\begin{theorem}\label{th.: dai0_completeness}
    $\Daizero$ is complete w.r.t. the class of $\ModelDaizero$-models.
\end{theorem}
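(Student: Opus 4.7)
The plan is to mirror the strategy used in the proof of Theorem \ref{th.: pai0_completeness}, hybridizing it with the Boolean simplifications employed in Theorem \ref{th.: daifull_completeness}. Soundness is a routine verification: each axiom (A1)--(A3), (O1)--(O5), (C1)--(C3), (A4) can be validated directly in any $\ModelDaizero$-model, with the clauses involving $\prec$ corresponding syntactically to the relation $\leq_\oplus$ holding between the images of the translations.

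For adequacy, I proceed by contraposition. Suppose $\Gamma \nvdash_{\Daizero} \phi$. Since $\Daizero$ is finitary and a conservative expansion of classical logic, Theorem \ref{th.: lindenbaum_like_lemma} yields a prime $\Daizero$-theory $\Delta \supseteq \Gamma$ with $\phi \notin \Delta$. I then build the canonical model for $\Delta$ using two constructions. The algebra of contents is $\langle N[Fm]/{\sim_\Delta}, \oplus, \gru \rangle$, obtained exactly as in the build-up to corollary \ref{cor.: pai0_order_correspondence}: the preorder $\leq_\Delta$ on $N[Fm]$ defined by $N(\alpha) \leq_\Delta N(\beta) \Leftrightarrow \alpha \prec \beta \in \Delta$ is well-defined by (O4) (note (O5) is not needed here, since the language lacks $\Box$), reflexive and transitive by (O1)--(O2), and the induced equivalence $\sim_\Delta$ is a congruence with respect to $\oplus$ via (C1)--(C2) and with respect to $\gru$ via (C3). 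Lemma \ref{lemma: pai0_partial_order} is then reproduced verbatim for $\Daizero$, giving a genuine join-semilattice structure together with an order-correspondence analogue of corollary \ref{cor.: pai0_order_correspondence}.

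The algebra of extensional values is the Boolean Lindenbaum–Tarski algebra $\Fm_c/\Omega v_1[\Delta]$, where $Fm_c = Fm_{\pair{\neg,\lor}}$ and $v_1 : Fm \to Fm_c$ is the identity on variables, commutes with $\neg$ and $\lor$, and treats the arrow case as in definition \ref{def.: dai_canonical_model}:
\[
v_1(\alpha \to \beta) = \begin{cases} v_1(\neg\alpha \lor \beta) & \text{if } [N(\beta)]_{\sim_\Delta} \leq_\oplus [N(\alpha)]_{\sim_\Delta}; \\ v_1(\neg\alpha \lor \beta) \land \neg v_1(\neg\alpha \lor \beta) & \text{otherwise.} \end{cases}
\]
Setting $v^\Delta = v_2 \circ v_1$ with $v_2$ the quotient map, and $\pi = [\cdot]_{\sim_\Delta}$, I obtain a $\ModelDaizero$-model. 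I then prove, in order, that $v_1[\Delta]$ is a prime classical theory (the analogue of lemma \ref{lemma: pai0_prime_to_S4_prime}, which is considerably simpler here because there is no necessitation rule and the second case of $v_1$ is ruled out by (A4) and the correspondence between $\prec$ and $\leq_\oplus$), that $\phi \in \Delta \Leftrightarrow v_1(\phi) \in v_1[\Delta]$ by induction on formula complexity (the analogue of lemma \ref{lemma: pai0_bridge_for_canonical_valuation}, with the arrow case using (A4) and corollary \ref{cor.: pai0_order_correspondence} but no (T) or (Nec$_g$)), and finally $v^\Delta(\phi) = 1_{v_1[\Delta]} \Leftrightarrow \phi \in \Delta$ as in lemma \ref{lemma: pai0_canonical_valuation}.

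The only genuine subtlety in the entire argument is to verify that the second case in the definition of $v_1(\alpha \to \beta)$ never triggers on a formula already in $\Delta$; this reduces, via (A4$^D$) and the fact that $\alpha \to \beta \in \Delta$ forces $\beta \prec \alpha \in \Delta$, to corollary \ref{cor.: pai0_order_correspondence} adapted to $\Daizero$. Every other clause of the induction is either straight classical logic or a direct transcription from the $\Paizero$ proof with the modal cases excised. Since every $\gamma \in \Gamma \subseteq \Delta$ evaluates to $1$ while $\phi$ does not, the canonical model furnishes the required countermodel.
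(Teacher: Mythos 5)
Your proposal is correct and follows exactly the route the paper intends: the paper's own proof is just the remark that the argument is an easy reiteration of Theorem \ref{th.: pai0_completeness}, and you have carried out that reiteration with the right adjustments (Boolean Lindenbaum--Tarski algebra in place of the interior algebra, modal cases excised, $\gru$ and (C3) retained for the content congruence, (A4$^D$) in place of (A4)). The only blemish is the initial soundness list, which should read (O1)--(O4) and (A4$^D$) rather than (O1)--(O5) and (A4), a slip you implicitly correct later in the argument.
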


Again, by checking soundness to each other's complete semantics. we obtain that ours is a reformulation of Ferguson's logic.
\begin{theorem}
    $\Daizero$ and $\FergusonDai$ are the same logic.
\end{theorem}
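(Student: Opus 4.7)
The plan mirrors the strategy used for Theorem~\ref{th.: uguaglianza_pai_e_ferguson} (and implicitly invoked for Theorem~\ref{th.: uguaglianza_dai_e_epstein}): I will prove the two inclusions by checking that each of the two logics is sound with respect to the complete semantics of the other, and then stitch together soundness on one side with completeness on the other.

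First I would unpack both semantics to the common ground provided by demodalization. A $\ModelFergusonDai$-model is, by the definition recalled in Section~\ref{sec: parry_intro}, essentially a single-point ($\mathbf{S4}$-)Kripke frame decorated with a groupoid $\pair{T,\oplus,\gru}$ on topics, a valuation, and a topic assignment $t$ satisfying the usual recursion $t(\neg\phi)=t(\phi)$, $t(\phi\circ\psi)=t(\phi)\oplus t(\psi)$ for the Boolean binary connectives, and $t(\phi\to\psi)=t(\phi)\gru t(\psi)$; since there is only one world, the homomorphisms $h_{w,w'}$ are trivial and persistence is vacuous. This structure is visibly a notational variant of a $\ModelDaizero$-model $\pair{\mathbf{B}_2,\pair{T,\oplus,\gru},N,v,s}$ whose algebra of truth values is the two-element Boolean algebra: identifying the Fergusonian $t$ with the composition $s\circ N$ is routine, because the conditions on $t$ are exactly what the translation clauses on $N$ together with the homomorphism requirement on $s$ enforce. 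In the opposite direction, a general $\ModelDaizero$-model with an arbitrary Boolean algebra $\B$ may be reduced (at the level of validity) to a family of two-valued ones by the standard Boolean representation, so the two model classes validate the same formulas.

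Next I would verify the two soundness statements axiom by axiom. For $\Daizero\subseteq\FergusonDai$: axioms (A1)--(A3) and (MP) are classical and obvious in any classical point-model; (A4$^D$) is just the demodalized form of Ferguson's defining clause for the arrow, given that in a one-point frame the strict conditional $\Box(\phi\supset\psi)$ collapses to $\phi\supset\psi$, which is the exact content of the demodalizer axiom $\phi\supset((\phi\supset\phi)\to\phi)$ that distinguishes $\FergusonDai$ from $\FergusonPai$; (O1)--(O4), (C1)--(C3) are semilattice facts about $\leq_\oplus$ witnessed by $t$ in the Fergusonian frame, using exactly the same computations that Ferguson uses in his completeness argument for $\FergusonPai$ in \cite{Ferguson23_subject_matter1}. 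For $\FergusonDai\subseteq\Daizero$: one checks that each of Ferguson's axioms, plus the demodalizer, holds in every $\ModelDaizero$-model; the only nontrivial point is again the demodalizer, which in a $\ModelDaizero$-model reduces, via the clause for $\to$, to the assertion that $\phi\supset((\phi\supset\phi)\to\phi)$ is a Boolean tautology whenever the side-condition on contents is met, and is vacuously $0\leq v(\cdots)$ otherwise.

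Finally, combining these two soundness directions with the completeness Theorem~\ref{th.: dai0_completeness} on one side and Ferguson's completeness theorem for $\FergusonDai$ on the other yields $\vdash_{\Daizero}=\vdash_{\FergusonDai}$ in the usual way. The step I expect to require the most care is the translation of Ferguson's topic assignment $t$ into the $gE$-style pair $(N,s)$ and back, especially making sure that the freedom of the groupoid operation $\gru$ (which has no equational constraints) is preserved in both directions; here I would rely on the fact that $N$ surjects onto $Fm_{\pair{\oplus,\gru}}$ and that $s$ is a free homomorphism into $\pair{T,\oplus,\gru}$, which is precisely why the absence of axioms on $\gru$ in $\Daizero$ matches the agnostic character of $\FergusonDai$.
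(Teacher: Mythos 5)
Your proposal takes exactly the paper's route: the paper proves this theorem by the same one-line strategy it uses for Theorem~\ref{th.: uguaglianza_pai_e_ferguson} and Theorem~\ref{th.: uguaglianza_dai_e_epstein}, namely checking that each logic is sound with respect to the other's complete semantics and then invoking Theorem~\ref{th.: dai0_completeness} together with Ferguson's completeness theorem. Your additional unpacking of the correspondence between Ferguson's single-point topic models and $\ModelDaizero$-models with $t = s\circ N$ is a faithful filling-in of the details the paper leaves implicit.
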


\end{section}

\begin{section}{Conclusions}\label{sec: conclusion}

The following Hasse diagram describes the hierarchy of the studied logics: 

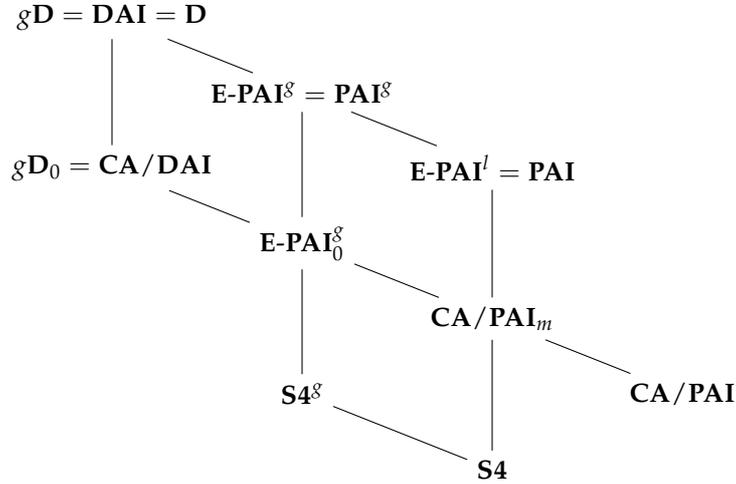
\begin{figure}[h]
\centering
\begin{tikzpicture}[scale=1]
\centering
  \node (one) at (-4.5,0) {$\Daifull=\DunnDai=\EpsteinD$};
  \node (one1) at (-4.5,-2) {$\Daifull_0=\FergusonDai$};
  \node (a) at (-2,-1) {$\Paifull=\FineGlobalPai$};
  \node (a1) at (0.5,-2) {$\LocalPaifull=\Paioriginal$};
  \node (b) at (-2,-3) {$\Paizero$};
  \node (b1) at (0.5,-4) {$\FergusonPaiBox$};
  \node (b2) at (3,-5) {$\FergusonPai$};
  \node (zero) at (-2,-5) {$\Sfourglobal$};
  \node (zero1) at (0.5,-6) {$\mathbf{S4}$};
  
  \draw (one) -- (a) -- (b) -- (one1) -- (one);
  \draw (a) -- (a1) -- (b1) -- (b) -- (zero) -- (zero1);
  \draw (zero1) -- (b1) -- (b2);
\end{tikzpicture}
\captionsetup{labelformat=empty}
\caption{Hasse diagram of the studied logics} \label{fig:M1}
\end{figure}

\n The links mean not only extension but also expansion. The position of the demodalized logics $\Daifull$ and $\Daizero$ is, strictly speaking, incorrect, since they are formulated in the $\Box$-free fragment of the language, yet as we mentioned both can be equivalently rewritten for $\lang_\mathbf{ML}^\to$ starting from the respective global Parry's systems immediately below them in the diagram and trivializing the $\Box$ via (A6).

The intent of this paper was two-fold: first, to answer - hopefully in a satisfactory way - to the open problem of finding a set-assignment semantics for Parry's logic of analytic implication; second, to show, via the case study of Parry's logic and nearby systems, how the proposed generalized Epstein semantics can be a helpful tool in logical investigations pertaining intensional phenomena. Ultimately, the main advantage of this framework resides in its modularity.

The semantics operates a distinction in treatment between the extensional and intensional fragments of the language. By individuating the latter fragment, we obtain a finer degree of control over the intensional behaviour of some connectives by splitting the universes of values assigned to formulae. Instead of a complex structure incorporating the entire semantics, we now have a separated algebra for intensional values simplifying the picture, as the value of an intensional formula is now determined with the contribution coming from the auxiliary algebra of contents. In order to investigate further logics, one of the two components of the semantics can be substituted without greatly affecting the other (up to adjusting the translation correlating the languages). E.g., we might be interested in the study of a Parry's system with an intuitionistic basis, which can be done by using Heyting algebras as the algebra of truth-values, or we might argue against the transparency of the content of some connective and decide to change the semilattice structure with something more articulated, like in Ferguson's conditional-agnostic logic. The framework is modular precisely in this sense. Hopefully this virtue will be helpful in future investigations.

\end{section}

\bibliographystyle{apalike}
\bibliography{Epstein}

\end{document}